\documentclass[10pt, article]{amsart}

\usepackage{tikz}
\usetikzlibrary{calc}
\usepackage{ae} 
\usepackage[T1]{fontenc}
\usepackage[cp1250]{inputenc}
\usepackage{amsmath}
\usepackage{amssymb, amsfonts,amscd,verbatim}

\usepackage[normalem]{ulem}
\usepackage{hyperref}
\usepackage{indentfirst}
\usepackage{latexsym}
\input xy
\xyoption{all}

\usepackage{amsmath}    

\theoremstyle{plain}
\newtheorem{Pocz}{Poczatek}[section]
\newtheorem{Proposition}[Pocz]{Proposition}
\newtheorem{Theorem}[Pocz]{Theorem}
\newtheorem{Corollary}[Pocz]{Corollary}

\newtheorem{Lemma}[Pocz]{Lemma}
\newtheorem{Observation}[Pocz]{Observation}

\newtheorem{Example}[Pocz]{Example}

\theoremstyle{definition}
\newtheorem{Definition}[Pocz]{Definition}

\theoremstyle{remark}
\newtheorem{Remark}[Pocz]{Remark}

\DeclareMathOperator*{\diam}{diam}

\def\diam{\mathrm{diam}}

\errorcontextlines=0
\numberwithin{equation}{section}

\title[
Unifying large scale and small scale geometry
]%
  {Unifying large scale and small scale geometry}

\author{Jerzy ~Dydak}
\address{University of Tennessee, Knoxville, USA}
\email{jdydak@utk.edu}

\date{ \today
}
\keywords{coarse geometry, coarse structures, Freundenthal compactification, Gromov boundary, Higson compactification, proximity, uniform structures}

\subjclass[2000]{Primary 54F45; Secondary 55M10}


\begin{document}
\maketitle

\tableofcontents

\begin{abstract}
A topology on a set $X$ is the same as a projection (i.e. an idempotent linear operator)
$cl:2^X\to 2^X$ satisfying $A\subset cl(A)$ for all $A\subset X$. That's a good way to summarize Kuratowski's closure operator.

Basic geometry on a set $X$ is a dot product $\cdot:2^X\times 2^X\to 2^Y$. Its equivalent form is an orthogonality relation on subsets of $X$. The optimal case is if the orthogonality relation satisfies a variant of parallel-perpendicular decomposition from linear algebra.

We show that this concept unifies small scale (topology, proximity spaces, uniform spaces) and large scale (coarse spaces, large scale spaces). Using orthogonality relations we define large scale compactifications that generalize all well-known compactifications:
Higson corona, Gromov boundary, \v Cech-Stone compactification, Samuel-Smirnov compactification, and Freudenthal compactification.
\end{abstract}

\section{Dot products and orthogonality relations}

The common wisdom is that large scale and small scale are dual. We will show that, at certain level, they can be explained using the same structure/concept.

That concept is a \textbf{basic dot product} on subsets of a set $X$. 
Equivalently, it is an \textbf{orthogonality relation} on subsets of a set $X$. 

\subsection{Dot products on sets}
\begin{Definition}
A \textbf{dot product} on a set $X$ is a symmetric function 
$$\cdot:2^X\times 2^X\to 2^Y$$
that is bi-linear in the following sense:\\
1. $\emptyset\cdot X=\emptyset$,\\
2. $C\cdot (D\cup E)=(C\cdot D)\cup (C\cdot E)$ for all subsets $C, D, E$ of $X$.

A \textbf{basic dot product} on $X$ is one that admits only two values: $\emptyset$ and $Y$.
\end{Definition}

\begin{Observation}
Notice every dot product on $X$ can be reduced to a basic dot product by changing all the non-empty values to $X$.

\end{Observation}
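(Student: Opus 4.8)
The plan is to make the reduction map explicit and then check, one axiom at a time, that it produces a basic dot product. Given a dot product $\cdot\colon 2^X\times 2^X\to 2^Y$, define a new operation $\odot\colon 2^X\times 2^X\to 2^Y$ by declaring $C\odot D=Y$ whenever $C\cdot D\neq\emptyset$ and $C\odot D=\emptyset$ whenever $C\cdot D=\emptyset$. By construction $\odot$ takes only the two values $\emptyset$ and $Y$, so once I verify that $\odot$ is a dot product it will automatically be a basic one; the content of the observation is therefore that this ``collapse to two values'' operation respects symmetry and bilinearity.

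Symmetry and the first axiom are immediate. Since $C\cdot D=D\cdot C$, the condition ``$C\cdot D\neq\emptyset$'' is symmetric in $C$ and $D$, hence $C\odot D=D\odot C$; and since $\emptyset\cdot X=\emptyset$ we get $\emptyset\odot X=\emptyset$ directly from the definition of $\odot$.

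The one step with any content is distributivity, and I would reduce it to the elementary Boolean fact that a union of two sets is empty precisely when both are empty. Concretely, $C\odot(D\cup E)=Y$ iff $C\cdot(D\cup E)\neq\emptyset$; by axiom 2 for $\cdot$ the left factor equals $(C\cdot D)\cup(C\cdot E)$, which is nonempty iff at least one of $C\cdot D$, $C\cdot E$ is nonempty, i.e.\ iff at least one of $C\odot D$, $C\odot E$ equals $Y$, i.e.\ iff $(C\odot D)\cup(C\odot E)=Y$. Negating these conditions within the two-element range $\{\emptyset,Y\}$ gives the matching statement for the value $\emptyset$, so $C\odot(D\cup E)=(C\odot D)\cup(C\odot E)$ in every case.

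I do not expect a genuine obstacle. The only point to watch is that the whole argument uses nothing about $\cdot$ beyond its bilinearity together with ``$A\cup B=\emptyset\iff A=\emptyset\text{ and }B=\emptyset$'', so no hidden assumption on $Y$ or on the original range of $\cdot$ is required. If anything, the mild care needed is to phrase the distributivity verification as an ``iff'' chain between the two admissible values, rather than attempting to manipulate arbitrary subsets of $Y$ directly.
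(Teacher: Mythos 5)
Your proof is correct and is exactly the verification the paper leaves implicit: collapse every nonempty value to a single value and check symmetry, axiom 1, and distributivity via the fact that $A\cup B=\emptyset$ iff $A=\emptyset$ and $B=\emptyset$. One small point in your favor: you sensibly send nonempty values to $Y$ (matching the paper's definition of a basic dot product, whose values lie in $2^Y$), whereas the Observation's wording ``to $X$'' is a minor slip in the paper itself.
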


\begin{Observation}
If one believes that the values of a dot product should be scalars, then there is a way to interpret basic dot products to adhere to that belief. Namely, scalars for $X$ are $\emptyset$ and $X$. The scalar multiplication $c\cdot A$ is $c\cap A$ and a basic dot product $\cdot$ is a function $\odot$ from $2^X\times 2^X$ to scalars of $X$ such that
$$A\odot (c_1\cdot B\cup c_2\cdot D)=c_1\cdot (A\odot B)\cup c_2\cdot (A\odot D).$$
\end{Observation}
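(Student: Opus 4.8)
The plan is to construct $\odot$ explicitly from the given basic dot product and then verify the displayed bilinearity identity by reducing it to two elementary facts. First I would define $\odot\colon 2^X\times 2^X\to\{\emptyset,X\}$ by setting $A\odot B=X$ when $A\cdot B=Y$ and $A\odot B=\emptyset$ when $A\cdot B=\emptyset$; that is, $\odot$ is $\cdot$ followed by the unique order-isomorphism of the two-element lattice $\{\emptyset,Y\}$ onto the lattice of scalars $\{\emptyset,X\}$. Since this relabeling sends $\emptyset\mapsto\emptyset$ and $Y\mapsto X$ and carries $\cup$ to $\cup$, the symmetry and bilinearity of $\cdot$ transfer verbatim to $\odot$; in particular I would record the additivity $A\odot(E\cup F)=(A\odot E)\cup(A\odot F)$ for all $E,F\subseteq X$, which is just axiom (2) read through the relabeling.

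Next I would isolate the one input that is not purely formal: the identity $A\cdot\emptyset=\emptyset$ for every $A$. This is not assumed directly; only $\emptyset\cdot X=\emptyset$ is. So I would derive it by writing $X=A\cup(X\setminus A)$, applying bilinearity to $\emptyset\cdot X=\emptyset$ to force $\emptyset\cdot A=\emptyset$, and then invoking symmetry to obtain $A\cdot\emptyset=\emptyset$. Through the relabeling this becomes the normalization $A\odot\emptyset=\emptyset$.

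With additivity and this empty-set normalization in hand, the displayed identity follows from the scalar-homogeneity subclaim $A\odot(c\cdot B)=c\cdot(A\odot B)$ for a scalar $c\in\{\emptyset,X\}$, where $c\cdot B=c\cap B$. When $c=X$ both sides equal $A\odot B$; when $c=\emptyset$ both sides are $\emptyset$, the left by the normalization $A\odot\emptyset=\emptyset$ and the right since $\emptyset\cap(A\odot B)=\emptyset$. Applying additivity to split the argument and then scalar-homogeneity to each summand yields
$$A\odot(c_1\cdot B\cup c_2\cdot D)=A\odot(c_1\cdot B)\cup A\odot(c_2\cdot D)=c_1\cdot(A\odot B)\cup c_2\cdot(A\odot D),$$
which is exactly the assertion.

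I do not expect a genuine obstacle here, since the statement is essentially change-of-label bookkeeping: the two-element lattices $\{\emptyset,Y\}$ and $\{\emptyset,X\}$ are isomorphic as Boolean lattices, and intersecting with a scalar behaves as an on/off switch compatible with that isomorphism. The only step requiring any care is the derivation of $A\cdot\emptyset=\emptyset$, because the definition posits the normalization only in the form $\emptyset\cdot X=\emptyset$, and one must propagate it through bilinearity and symmetry before the empty summand in the displayed identity can be handled correctly.
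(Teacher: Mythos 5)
Your proof is correct: the paper states this Observation without any proof, and your verification --- relabeling $Y$ as $X$, deriving $A\cdot\emptyset=\emptyset$ from axiom 1 via bilinearity and symmetry, then checking scalar homogeneity case by case --- is exactly the bookkeeping the author treats as self-evident; in fact your splitting argument $X=A\cup(X\setminus A)$ is the same trick the paper uses when proving that dot products induce orthogonality relations. The only thing worth adding is the one-line converse, so that the ``is'' in the statement becomes a genuine equivalence: for a symmetric $\odot$ satisfying the displayed identity, taking $c_1=c_2=X$ recovers additivity and taking $c_1=c_2=\emptyset$ recovers $\emptyset\odot X=\emptyset$.
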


\begin{Example}
1. Every set $X$ has a natural dot product defined by
$$C\cdot D=C\cap D.$$
2. Every topology on $X$ induces its dot product defined by
$$C\cdot D=cl(C)\cap cl(D).$$
3. Every subset $X$ of a topological space $\bar X$ has the induced 
dot product defined by
$$C\cdot D=cl(C)\cap cl(D)\setminus X,$$
where the closures are taken in $\bar X$.
\end{Example}

\subsection{Orthogonality relations on sets}
The only information a basic dot product $\cdot$ carries is which sets $C, D$ are $\cdot$-orthogonal, i.e. $C\cdot D=\emptyset$. Therefore, it makes sense to define the relation of orthogonality axiomatically.

\begin{Definition}
An \textbf{orthogonality relation} on subsets of a set $X$ is a symmetric relation $\perp$ satisfying the following properties:\\
1. $\emptyset \perp X$,\\
2. $A\perp (C\cup C')\iff A\perp C$ and $A\perp C'$.
\end{Definition}


\begin{Observation}
One can reduce the number of axioms by dropping symmetry and replacing Axiom 2 by\\
2'. $A\perp (C\cup C')\iff C\perp A$ and $C'\perp A$.
\end{Observation}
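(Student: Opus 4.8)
The plan is to establish that the system consisting of Axiom~1 together with the one-sided Axiom~2', taken \emph{without} assuming symmetry, is logically equivalent to the original system (symmetry plus Axioms~1 and~2). Since Axiom~1 appears in both, the task reduces to two implications: deriving symmetry and Axiom~2 from 2', and conversely recovering 2' from symmetry together with Axiom~2.

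The crucial move, and the only step that is not purely formal, is to specialize 2' by setting $C'=C$. Because $C\cup C=C$, the left side of 2' collapses to $A\perp C$, while the right side becomes ``$C\perp A$ and $C\perp A$'', i.e. simply $C\perp A$. Hence 2' forces $A\perp C\iff C\perp A$ for all $A,C$: symmetry is a \emph{consequence} of 2', not an independent hypothesis. This is precisely where the reduction in the number of axioms is paid for, so I expect the whole argument to hinge on recognizing this substitution; everything after it is bookkeeping.

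With symmetry available, I would close both directions. Assuming 2' (hence symmetry), I rewrite the conjunction ``$C\perp A$ and $C'\perp A$'' on the right of 2' as ``$A\perp C$ and $A\perp C'$'', which turns 2' verbatim into Axiom~2. Conversely, assuming symmetry and Axiom~2, the same rewriting carries Axiom~2 back to 2'. Combined with the derivation of symmetry, these give the equivalence of the two systems. I anticipate no genuine obstacle beyond spotting the $C'=C$ specialization, since all remaining manipulations are immediate consequences of symmetry.
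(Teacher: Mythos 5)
Your proof is correct: the specialization $C'=C$ in 2' immediately yields symmetry, after which 2' and Axiom~2 are interchangeable, and this is exactly the intended argument (the paper states this Observation without proof, but the $C'=C$ substitution is the evident route).
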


\begin{Example}\label{OrthInducedByBornology}
For every bornology $\mathcal{B}$ on a set $X$ the relation $A\perp C$ defined as $A\cap C\in \mathcal{B}$ is an orthogonality relation.
\end{Example}
\begin{proof}
Recall that a bornology on $X$ is any family of subsets closed under finite unions 
so that $B\subset B'\in\mathcal{B}$ implies $B\in\mathcal{B}$.
\end{proof}

\begin{Proposition}
Suppose $X$ is a set.\\
1. Every dot product $\cdot$ on $X$ induces an orthogonal relation on $X$ defined by
$$C\perp D\iff C\cdot D=\emptyset.$$
2. Every orthogonality relation $\perp$ on $X$ induces a basic dot product $\cdot$
defined as follows: $C\cdot D=\emptyset$, if $C\perp D$, $C\cdot D=X$ otherwise.
\end{Proposition}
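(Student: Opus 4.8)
The plan is to prove each of the two assertions by directly checking the defining axioms of the target structure against those already guaranteed for the source structure. In both directions the verification is a mechanical unwinding of definitions, so I expect no genuine obstacle; the only point deserving a moment's care is a small case analysis in part 2.

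For part 1, I would take a dot product $\cdot:2^X\times 2^X\to 2^Y$ and verify that the relation $C\perp D\iff C\cdot D=\emptyset$ meets the three requirements of an orthogonality relation. Symmetry of $\perp$ is immediate from symmetry of $\cdot$: since $C\cdot D=D\cdot C$, the two products vanish simultaneously. The axiom $\emptyset\perp X$ is exactly axiom 1 of a dot product, namely $\emptyset\cdot X=\emptyset$. For the union axiom, bilinearity (axiom 2) gives $A\cdot(C\cup C')=(A\cdot C)\cup(A\cdot C')$, and a union of two subsets of $Y$ is empty precisely when both are empty; hence $A\perp(C\cup C')$ holds iff $A\perp C$ and $A\perp C'$. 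Note that the codomain $2^Y$ plays no role here beyond recording when a value is empty, so this construction is insensitive to the choice of $Y$.

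For part 2, I would start from an orthogonality relation $\perp$ and set $C\cdot D=\emptyset$ when $C\perp D$ and $C\cdot D=X$ otherwise, taking $Y=X$; since this takes only the two values $\emptyset$ and $X$, once I show it is a dot product it is automatically basic. Symmetry and the identity $\emptyset\cdot X=\emptyset$ transfer directly from symmetry of $\perp$ and from $\emptyset\perp X$. The one step that warrants a short case analysis is bilinearity: I would argue that $C\cdot(D\cup E)=\emptyset$ iff $C\perp(D\cup E)$ iff (by axiom 2 for $\perp$) $C\perp D$ and $C\perp E$ iff both $C\cdot D$ and $C\cdot E$ equal $\emptyset$ iff $(C\cdot D)\cup(C\cdot E)=\emptyset$; since the only alternative value is $X$, the complementary case $C\cdot(D\cup E)=X$ matches $(C\cdot D)\cup(C\cdot E)=X$ as well, so the desired equality holds in all cases.

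If desired, one can close the loop by observing that these two passages are mutually inverse on basic dot products: applying part 1 to the product produced by part 2 returns the original $\perp$, and applying part 2 to any basic dot product returns that product unchanged. This is not demanded by the statement, but it records that the correspondence between basic dot products and orthogonality relations is a genuine bijection.
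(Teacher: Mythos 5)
Your proposal is correct and follows essentially the same route as the paper: a direct verification of the axioms, with the union axiom in part 1 obtained from bilinearity (the paper phrases it via a monotonicity step, decomposing $C'=C\cup(C'\setminus C)$, while you note directly that a union of values is empty iff each value is empty — the same idea). Part 2 is left to the reader in the paper, so your explicit case analysis there, and the closing remark on the bijection, simply fill in what the paper omits.
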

\begin{proof}
1. $\emptyset\perp X$ as $\emptyset\cdot X=\emptyset$.
If $C\subset C'$ and $C'\perp D$, then $\emptyset=C'\cdot D=(C\cup (C'\setminus C))\cdot D=C\cdot D\cup (C'\setminus C)\cdot D$ resulting in $C\cdot D=\emptyset$.
If $C\perp D$ and $C'\perp D$, then $\emptyset=C\cdot D\cup C'\cdot D=(C\cup C')\cdot D$ resulting in $D\perp (C\cup C')$.\\
2. Left to the reader.
\end{proof}

\subsection{Examples of small scale orthogonality}
\begin{Example}
1. \textbf{Set-theoretic orthogonality}: Disjointness,\\
2. \textbf{Topological orthogonality}: Disjointness of closures,\\
3. \textbf{Metric orthogonality}: Disjointness of $r$-balls for some $r > 0$,\\
4. \textbf{Uniform orthogonality}: Disjointness of $\mathcal{U}$-neighborhoods for some uniform cover $\mathcal{U}$.
\end{Example}


\subsection{Examples of large scale orthogonality}
\begin{Example}
1. \textbf{Set-theoretic large scale orthogonality}: Finiteness of intersection,\\
2. \textbf{Metric large scale orthogonality}: Boundedness of intersection of $r$-balls for all $r > 0$,\\
3. \textbf{Group large scale orthogonality}: Finiteness of $(A\cdot F)\cap (C\cdot F)$ for all finite subsets $F$ of a group $G$.

Same as metric ls-orthogonality for word metrics if $G$ is finitely generated.\\
4. \textbf{Topological ls-orthogonality}: Disjointness of coronas of closures in a fixed compactification $\bar X$ of $X$.
\end{Example}


\subsection{Hyperbolic orthogonality}
Given a metric space $(X,d)$, the \textbf{Gromov product} of $x$ and $y$ with respect to $a\in X$ is defined by
$$
\left< x,y\right>_a=\frac{1}{2}\big(d(x,a)+d(y,a)-d(x,y)\big).
$$

Recall that metric space $(X,d)$ is (Gromov) $ \delta-$\textbf{hyperbolic} if it satisfies the $\delta/4$-inequality:
$$
\left< x,y\right>_{a} \geq \min \{\left< x,z\right>_{a},\left< z,y \right> _{a}\}-\delta/4, \quad \forall x,y,z,a\in X.$$

$(X,d)$ is \textbf{Gromov hyperbolic} if it is $ \delta-$hyperbolic for some $\delta > 0$.

\begin{Definition}
Two subsets $A$ and $C$ of a hyperbolic space $X$ are \textbf{hyperbolically orthogonal} if there is $r > 0$ such that
$$\left< a,c\right>_{p} < r$$
for some fixed $p$ and all $(a,c)\in A\times C$.
\end{Definition}


\subsection{Freundenthal orthogonality}
\begin{Definition}
Suppose $X$ is a locally compact and locally connect topological space.
Two subsets $A$ and $C$ of $X$ are \textbf{Freundenthal orthogonal}
if there is a compact subset $K$ of $X$ such that the union of all components of $X\setminus K$ intersecting $A$ is disjoint from the union of all components of $X\setminus K$ intersecting $C$.
\end{Definition}


\subsection{Bounded sets}
\begin{Definition}
Given an orthogonality relation $\perp$ on subsets of $X$, a \textbf{bounded subset} $B$ of $X$ is one that is orthogonal to the whole set: 
$$B\perp X.$$
\end{Definition}


 \begin{Definition}
An orthogonality relation $\perp$ on subsets of $X$ is \textbf{small scale} if the empty set is the only  subset of $X$ that is orthogonal to itself. In particular, the only bounded subset of $X$ is the empty set.
\end{Definition}

\begin{Definition}
An orthogonality relation $\perp$ on subsets of $X$ is \textbf{large scale} if each point is a bounded subset of $X$.
\end{Definition}


\subsection{Normal orthogonal relations}

\begin{Definition}\label{SpanDef}
Given an orthogonality relation $\perp$ on subsets of $X$, two subsets $C$ and $D$ \textbf{$\perp$-span} $X$ if the following conditions are satisfied:\\
1. $C\perp D$,\\
2. $X$ can be decomposed as $X=C'\cup D'$, where $C'\perp D$ and $D'\perp C$.
\end{Definition}

\begin{Remark}
Obviously, we may interpret the word "decompose" in the definition above as $C'\cap D'=\emptyset$ since $D'$ can be replaced by $X\setminus C'$.
The other extreme is when $C\subset C'$ and $D\subset D'$ which can be accomplished by replacing $C'$ by $C\cup C'$ and replacing $D'$ by $D\cup D'$. In that case we may think of $C$ being parallel to $C'$, $D$ being parallel to $D'$ and interpret Definition \ref{SpanDef} as an analog of \textbf{parallel-perpendicular decomposition} in Linear Algebra.
\end{Remark}

\begin{Definition}
An orthogonality relation $\perp$ on subsets of $X$ is \textbf{Fr\' echet} if
$\{x\}\perp \{y\}$ whenever $x, y\in X$ and $x\ne y$.
\end{Definition}

\begin{Definition}
An orthogonality relation $\perp$ on subsets of $X$ is \textbf{Hausdorff} if
$\{x\}$ and $\{y\}$ $\perp$-span $X$ whenever $x, y\in X$ and $x\ne y$.
\end{Definition}

\begin{Definition}
An orthogonality relation $\perp$ on subsets of $X$ is \textbf{regular} (or \textbf{Vietoris}) if\\
1. it is Fr\' echet,\\
2. $\{x\}$ and $A$ $\perp$-span $X$ whenever $x\perp A$.\\
3. $x\perp x$ implies $x$ is $\perp$-bounded.
\end{Definition}

\begin{Definition}
An orthogonality relation $\perp$ on subsets of $X$ is \textbf{normal} (or \textbf{Tietze}) if\\
1. it is Fr\' echet,\\
2. $C$ and $D$ $\perp$-span $X$ whenever $C\perp D$.\\
3. $B\perp B$ implies $B$ is $\perp$-bounded.
\end{Definition}

\begin{Example}
1. The topological orthogonality relation on a topological space is Hausdorff if and only if $X$ is topologically Hausdorff.\\
2. The topological orthogonality relation on a topological space is regular if and only if $X$ is topologically regular.\\
3. The topological orthogonality relation on a topological space is normal if and only if $X$ is topologically normal.
\end{Example}


\begin{Definition}
The \textbf{functional orthogonality relation} $\perp$ on a topological space $X$ is defined as follows: $C\perp D$ if there is a continuous function $f:X\to [0,1]$ such that
$f(C)\subset \{0\}$ and $f(D)\subset \{1\}$.
\end{Definition}

\begin{Proposition}
The functional orthogonality relation $\perp$ on a topological space $X$ is normal if and only if $X$ is functionally Hausdorff.
\end{Proposition}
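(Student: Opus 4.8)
The plan is to examine the three defining conditions of normality separately for the functional orthogonality relation $\perp$, and to observe that two of them hold automatically, so that normality collapses to the first condition alone, which is manifestly equivalent to functional Hausdorffness.

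First I would treat condition~1, the Fr\'echet property. Unwinding the definition, $\{x\}\perp\{y\}$ means exactly that there is a continuous $f:X\to[0,1]$ with $f(x)=0$ and $f(y)=1$. For distinct $x,y$ this is precisely the statement that $x$ and $y$ are separated by a continuous function, i.e. functional Hausdorffness: any continuous map to $\RR$ separating the two values can be post-composed with an affine map and a truncation into $[0,1]$ to achieve the prescribed endpoint values. So condition~1 for $\perp$ is equivalent to $X$ being functionally Hausdorff, and this is the part carrying all the content of the proposition.

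Next I would dispatch condition~3 and show it holds for every $X$. If $B\perp B$, then some continuous $f$ satisfies $f(B)\subset\{0\}$ and $f(B)\subset\{1\}$ simultaneously, forcing $f(B)\subset\{0\}\cap\{1\}=\emptyset$ and hence $B=\emptyset$. Since $\emptyset$ is $\perp$-bounded (take $f\equiv 1$, so that $f(\emptyset)\subset\{0\}$ vacuously and $f(X)\subset\{1\}$), condition~3 is satisfied vacuously, independent of $X$.

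The remaining and most computational step is condition~2, the spanning property, which I claim again holds for every $X$. Given $C\perp D$, fix a separating $f$ with $f(C)\subset\{0\}$ and $f(D)\subset\{1\}$, and set $C'=f^{-1}([0,1/2))$ and $D'=f^{-1}([1/2,1])$, so that $X=C'\cup D'$. To witness $C'\perp D$, compose $f$ with the truncation $t\mapsto\max(0,\min(1,2t-1))$, which sends $[0,1/2)$ to $0$ and $1$ to $1$; to witness $D'\perp C$, compose $f$ with $t\mapsto\max(0,\min(1,1-2t))$, which sends $[1/2,1]$ to $0$ and $0$ to $1$ (then use symmetry of $\perp$). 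Hence $C$ and $D$ $\perp$-span $X$. Putting the three pieces together, normality of $\perp$ reduces to condition~1, which is functional Hausdorffness. The only genuine obstacle is the explicit function-juggling in condition~2; conditions~1 and~3 are immediate once the definitions are unwound, so the conceptual key is recognizing that conditions~2 and~3 are automatic for the functional relation.
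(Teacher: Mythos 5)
Your proof is correct and takes essentially the same approach as the paper: identify the Fr\'echet condition with functional Hausdorffness, and verify the spanning condition by decomposing $X$ into the preimages under a separating function $f$ of the two halves of $[0,1]$. You are in fact slightly more thorough than the paper's own proof, which leaves condition~3 and the explicit truncation maps witnessing $C'\perp D$ and $D'\perp C$ unstated.
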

\begin{proof}
$\{x\}\perp \{y\}$, where $x, y\in X$ and $x\ne y$, means there is a continuous function
$f:X\to [0,1]$ such that $f(x)=0$ and $f(x)=1$. That is precisely the definition of being functionally Hausdorff. Also, for any continuous $f:X\to [0,1]$ satisfying
$f(C)\subset \{0\}$, $f(D)\subset \{1\}$, one puts $C'=f^{-1}[0.5,1]$, $D'=f^{-1}[0,0.5]$
and observe $X=C'\cup D'$, $C'\perp C$, and $D'\perp D$.
\end{proof}

\section{Topology induced by orthogonal relations}

There are at least two topologies induced by orthogonality relations. The most useful is the one based on the following concept:
\begin{Definition}\label{PerpADef}
Given an orthogonality relation $\perp$ on $X$ and $A\subset X$, $A^\perp$ is defined
as 
$$A^\perp :=\{x\in X\setminus A\mid x\perp A\}.$$
\end{Definition}

\begin{Proposition}
$C^\perp\cap D^\perp=(C\cup D)^\perp$.
\end{Proposition}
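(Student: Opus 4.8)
The plan is to prove the set equality $C^\perp \cap D^\perp = (C\cup D)^\perp$ by a direct double-inclusion argument, using only Axiom~2 of an orthogonality relation (the fact that $A\perp(C\cup C')\iff A\perp C \text{ and } A\perp C'$, together with its symmetric reading) and the definition of $A^\perp$ from Definition~\ref{PerpADef}. Recall that $A^\perp = \{x\in X\setminus A \mid x\perp A\}$, so membership of a point $x$ in $A^\perp$ packages two pieces of data: that $x\notin A$, and that $\{x\}\perp A$. I would therefore track both the ``set-membership'' condition and the ``orthogonality'' condition separately as I move across the equality.

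First I would unpack the right-hand side. A point $x$ lies in $(C\cup D)^\perp$ exactly when $x\notin C\cup D$ and $\{x\}\perp(C\cup D)$. The first condition is equivalent to $x\notin C$ \emph{and} $x\notin D$. For the second, I would invoke Axiom~2 in the form $\{x\}\perp(C\cup D)\iff \{x\}\perp C \text{ and } \{x\}\perp D$. Combining these, $x\in(C\cup D)^\perp$ is equivalent to the conjunction of four statements: $x\notin C$, $x\notin D$, $\{x\}\perp C$, and $\{x\}\perp D$. Regrouping these four as $\big(x\notin C \text{ and } \{x\}\perp C\big)$ and $\big(x\notin D \text{ and } \{x\}\perp D\big)$ gives exactly $x\in C^\perp$ and $x\in D^\perp$, i.e. $x\in C^\perp\cap D^\perp$. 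Since every step is an equivalence, both inclusions follow simultaneously and the proof is complete.

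The only step that requires genuine care is the interaction between the set-theoretic exclusion clause ($x\notin A$) and the orthogonality clause, since $A^\perp$ is deliberately defined to exclude points of $A$ itself. The potential obstacle is that one might worry whether $x\notin C\cup D$ matches cleanly with the separate exclusions $x\notin C$ and $x\notin D$; fortunately this is just the elementary identity $X\setminus(C\cup D)=(X\setminus C)\cap(X\setminus D)$, so no subtlety arises. I expect no use of symmetry, Axiom~1, or any of the small/large scale hypotheses is needed here; the result holds for \emph{every} orthogonality relation and follows purely formally from the additivity axiom. If one preferred, the same computation can be phrased as a chain of iff's in a single displayed line rather than as two inclusions, but the regrouping of the four conditions is the conceptual heart either way.
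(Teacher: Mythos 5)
Your proof is correct: the paper itself leaves this proposition to the reader, and your chain of equivalences --- splitting membership in $(C\cup D)^\perp$ into the exclusion clause $x\notin C\cup D$ and the orthogonality clause $\{x\}\perp (C\cup D)$, then applying Axiom~2 and the identity $X\setminus(C\cup D)=(X\setminus C)\cap(X\setminus D)$ --- is precisely the routine argument the paper intends. No gaps; your observation that only the additivity axiom is needed (no symmetry, no small/large scale hypotheses) is also accurate.
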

\begin{proof}
Left to the reader.
\end{proof}

\begin{Definition}
Given an orthogonality relation $\perp$ on $X$, the \textbf{topology induced} by $\perp$ has $\{A^\perp\mid A\subset X\}\cup \{\emptyset\}$ as its basis.
\end{Definition}


\begin{Example}
Suppose $\mathcal{B}$ is a non-empty bornology on $X$ and $\perp$ is the orthogonality relation induced by
$\mathcal{B}$ (see \ref{OrthInducedByBornology}).
The topology induced by $\perp$ is discrete.
\end{Example}
\begin{proof}
Since $A\cap C=\emptyset\in \mathcal{B}$ implies $A\perp C$, 
$(X\setminus \{x\})^\perp=\{x\}$ for all $x\in X$ and all subsets of $X$ are open.
\end{proof}

\begin{Example}
Large scale orthogonal relations induce discrete topologies.
\end{Example}

\begin{Proposition}
Suppose $\perp$ is an orthogonality relation on a set $X$. If $X$ is Hausdorff, then for each two different points $x,y\in X$ there are subsets $C, D$ of $X$ such that $x\in C^\perp$, $y\in D^\perp$, $x\perp D^\perp$, $y\perp C^\perp$,
and $C^\perp\cap D^\perp=\emptyset$.
\end{Proposition}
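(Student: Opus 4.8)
The plan is to read off two subsets directly from the Hausdorff hypothesis and then check the five required conditions one at a time. Since $X$ is Hausdorff, the singletons $\{x\}$ and $\{y\}$ $\perp$-span $X$; in particular $x\perp y$, and there is a decomposition $X=C'_0\cup D'_0$ with $C'_0\perp\{y\}$ and $D'_0\perp\{x\}$. First I would normalize this decomposition by setting $C'=(C'_0\cup\{x\})\setminus\{y\}$ and $D'=(D'_0\cup\{y\})\setminus\{x\}$. Using Axiom~2 together with $x\perp y$ one checks that $C'\perp\{y\}$ and $D'\perp\{x\}$ still hold (each set is only enlarged by a point orthogonal to the relevant singleton and is otherwise shrunk, so monotonicity applies), that $X=C'\cup D'$ is preserved because $x\in C'$ and $y\in D'$, and crucially that $x\notin D'$ and $y\notin C'$. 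The candidate sets are then $C:=D'$ and $D:=C'$.

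With this choice the membership conditions are immediate: since $x\perp D'$ and $x\notin D'$ we get $x\in (D')^\perp=C^\perp$, and symmetrically $y\in (C')^\perp=D^\perp$. Disjointness is even cleaner, using the identity $C^\perp\cap D^\perp=(C\cup D)^\perp$ proved above: here $C^\perp\cap D^\perp=(D')^\perp\cap(C')^\perp=(C'\cup D')^\perp=X^\perp$, and $X^\perp=\emptyset$ because Definition~\ref{PerpADef} excludes points lying inside $X$, of which there are none outside $X$.

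The step I expect to be the only genuine obstacle is verifying the two ``global'' orthogonalities $x\perp D^\perp$ and $y\perp C^\perp$: at first glance these look as though they require $\perp$ to be closed under arbitrary unions, whereas Axiom~2 only delivers finite unions. The resolution is to avoid infinite additivity altogether and instead trap each set $A^\perp$ inside the complementary half of the spanning decomposition, invoking monotonicity of $\perp$ (an immediate consequence of Axiom~2: if $E\subset F$ and $a\perp F$ then $a\perp E$). Indeed $D^\perp=(C')^\perp\subset X\setminus C'$ by Definition~\ref{PerpADef}, and $X\setminus C'\subset D'$ because $X=C'\cup D'$; hence $D^\perp\subset D'$, and from $x\perp D'$ monotonicity yields $x\perp D^\perp$. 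The symmetric inclusion $C^\perp=(D')^\perp\subset X\setminus D'\subset C'$ together with $y\perp C'$ gives $y\perp C^\perp$. Thus the whole argument reduces to the single observation that each ``star'' $A^\perp$ is contained in the opposite half of the decomposition furnished by the Hausdorff property.
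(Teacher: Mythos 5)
Your proposal is correct and takes essentially the same approach as the paper: both read off the spanning decomposition guaranteed by the Hausdorff hypothesis, arrange $x$ and $y$ into the appropriate halves, take the two halves crosswise as $C$ and $D$, and use $A^\perp\subset X\setminus A$ together with monotonicity of $\perp$ to verify the conditions. The only cosmetic difference is that the paper takes the two halves to be disjoint, so that $x\in C^\perp\subset D$ and $y\in D^\perp\subset C$ yield $C^\perp\cap D^\perp=\emptyset$ directly, whereas you allow the halves to overlap and instead conclude via the identity $C^\perp\cap D^\perp=(C\cup D)^\perp=X^\perp=\emptyset$.
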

\begin{proof}
Pick two disjoint sets, $C$ containing $y$ and $D$ containing $x$, such that
$x\perp C$, $y\perp D$, and $C\cup D=X$. Notice
$x\in C^\perp\subset D$ and $y\in D^\perp\subset C$, so $C^\perp\cap D^\perp=\emptyset$.
\end{proof}

\begin{Proposition}
Suppose $\perp$ is an orthogonality relation on a set $X$. If $X$ is regular, then for each subset $A\subset X$ and each point $x\notin A$ there are subsets $C, D$ of $X$ such that $x\in C^\perp$, $A\subset D^\perp$, $x\perp D^\perp$, $A\perp C^\perp$,
and $C^\perp\cap D^\perp=\emptyset$.
\end{Proposition}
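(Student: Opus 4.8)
The plan is to run the exact analog of the preceding (Hausdorff) Proposition, with the set $A$ playing the role of the second point $y$ and with clause~2 of the definition of \emph{regular} --- the span property that $\{x\}$ and $A$ $\perp$-span $X$ --- replacing Hausdorffness. The entire statement reduces to producing a disjoint cover $X=C\cup D$, $C\cap D=\emptyset$, with $A\subset C$, $x\in D$, $x\perp C$ and $A\perp D$. Granting such $C$ and $D$, the five conclusions are purely formal. Since $D=X\setminus C$ and $C=X\setminus D$, Definition~\ref{PerpADef} gives $C^\perp\subset X\setminus C=D$ and $D^\perp\subset X\setminus D=C$, so $C^\perp\cap D^\perp\subset D\cap C=\emptyset$. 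The membership $x\in C^\perp$ holds because $x\perp C$ and $x\notin C$, and $A\subset D^\perp$ because every $a\in A$ satisfies $a\perp D$ (as $A\perp D$) and $a\notin D$ (as $A\subset C$). Finally $x\perp D^\perp$ follows from $D^\perp\subset C$ with $x\perp C$, and $A\perp C^\perp$ from $C^\perp\subset D$ with $A\perp D$.

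Throughout I use \emph{monotonicity} of $\perp$: if $S\subset T$ and $T\perp Z$ then $S\perp Z$, which is immediate from Axiom~2 of an orthogonality relation by writing $T=S\cup T$. The only genuine construction is the disjoint cover. First apply the span property of Definition~\ref{SpanDef} to $\{x\}$ and $A$; this produces a cover $X=C_0\cup D_0$ with $C_0\perp A$ and $D_0\perp x$. I then normalize it onto the correct sides by set surgery: put $D:=(C_0\setminus A)\cup\{x\}$ and $C:=X\setminus D$. Axiom~2 together with $x\perp A$ gives $D\perp A$ (each of $C_0\setminus A$ and $\{x\}$ is orthogonal to $A$); the hypothesis $x\notin A$ gives $A\cap D=\emptyset$, i.e. $A\subset C$; and since $C\subset D_0\cup A$ while $(D_0\cup A)\perp x$, monotonicity yields $x\perp C$. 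Thus $C$ and $D$ are disjoint with union $X$, with $x\in D$ by construction and $A\subset C$, $x\perp C$, $A\perp D$ as just verified.

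The main obstacle is not the manipulation but the hypothesis: invoking the span property of a regular relation requires the orthogonality $x\perp A$, and for an infinite $A$ this does \emph{not} follow from $x\notin A$ and the finitary Axiom~2 alone (orthogonality to each singleton $\{a\}$ need not assemble into orthogonality to $A$). I therefore read ``$x\notin A$'' as carrying $x\perp A$, the orthogonality analog of a point lying outside a $\perp$-closed set, just as ``$x\ne y$'' already supplied $x\perp y$ in the Hausdorff case. The remaining care is bookkeeping: the raw decomposition $X=C_0\cup D_0$ need not place all of $A$ on one side nor $x$ on the other, and the surgery above repairs this using only $x\notin A$ and binary unions --- notably, neither extreme normalization in the Remark after Definition~\ref{SpanDef} nor clause~3 of regularity is needed for this route.
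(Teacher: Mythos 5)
Your proposal is correct and follows essentially the same route as the paper: the paper's proof likewise applies clause 2 of regularity to $\{x\}$ and $A$ to pick a disjoint cover $X=C\cup D$ with $A\subset C$, $x\in D$, $x\perp C$, $A\perp D$, and then reads off the five conclusions formally via $x\in C^\perp\subset D$ and $A\subset D^\perp\subset C$; your ``surgery'' merely spells out the normalization that the paper delegates to the Remark following the definition of $\perp$-span. Your observation that the hypothesis must be read as $x\perp A$ rather than the bare $x\notin A$ is well taken, and it applies equally to the paper's own proof, which can only invoke regularity under that reading.
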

\begin{proof}
Pick two disjoint sets, $C$ containing $A$ and $D$ containing $x$, such that
$x\perp C$, $A\perp D$, and $C\cup D=X$. Notice
$x\in C^\perp\subset D$ and $A\subset D^\perp\subset C$, so $C^\perp\cap D^\perp=\emptyset$.
\end{proof}

There is another way to define a topology on $X$ given an orthogonality relation $\perp$: $A$ is \textbf{closed} if $x\perp A$ for all $x\notin A$. In the case of regular relations those two topologies coincide.

\begin{Proposition}
If $\perp$ is a regular orthogonality relation on a set $X$, then
$$(X\setminus A^\perp)^\perp =A^\perp$$
for all subsets $A$ of $X$.
\end{Proposition}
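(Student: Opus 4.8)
The plan is to prove the two inclusions separately, exploiting the fact that one direction is essentially definitional and the other is where regularity does the work. First I would record the easy containment $(X\setminus A^\perp)^\perp \subseteq A^\perp$: by Definition~\ref{PerpADef}, the set $(X\setminus A^\perp)^\perp$ consists of points lying in $X\setminus(X\setminus A^\perp)=A^\perp$, so it is automatically contained in $A^\perp$, with no hypothesis needed. For the reverse containment $A^\perp\subseteq(X\setminus A^\perp)^\perp$, I would fix an arbitrary $x\in A^\perp$, so that $x\notin A$ and $x\perp A$. Since $x\perp A$, axiom~2 of regularity guarantees that $\{x\}$ and $A$ \mbox{$\perp$-span} $X$; unwinding Definition~\ref{SpanDef} then produces a decomposition $X=C'\cup D'$ with $C'\perp A$ and $D'\perp x$.

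The key step is to confine the complement $X\setminus A^\perp$ inside $A\cup D'$. Here I would use that $\perp$ is monotone under passing to subsets, which is an immediate consequence of Axiom~2 of orthogonality relations (if $C\subseteq C'$ then $C'=C\cup C'$, so $A\perp C'$ forces $A\perp C$). Applying this to $C'\perp A$, every $y\in C'$ satisfies $\{y\}\perp A$, hence $y\in A$ or $y\in A^\perp$; therefore $C'\setminus A^\perp\subseteq A$. Writing
\[
X\setminus A^\perp=(C'\setminus A^\perp)\cup(D'\setminus A^\perp)\subseteq A\cup D',
\]
gives the desired containment.

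Finally I would assemble the orthogonalities. Since $x\perp A$ (because $x\in A^\perp$) and $x\perp D'$ (by symmetry of $D'\perp x$), Axiom~2 of orthogonality relations yields $x\perp(A\cup D')$, and monotonicity then gives $x\perp(X\setminus A^\perp)$ from the containment above. As $x\in A^\perp$ means precisely $x\notin X\setminus A^\perp$, we conclude $x\in(X\setminus A^\perp)^\perp$, completing the reverse inclusion. I expect the main obstacle to be isolating the containment $X\setminus A^\perp\subseteq A\cup D'$: the whole argument hinges on recognizing that the span decomposition supplied by regularity packages $X$ as $C'\cup D'$ in exactly the way that pushes the complement of $A^\perp$ into $A$ together with the part of $X$ that is orthogonal to $x$. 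Once that geometric bookkeeping is in place, everything else reduces to the union axiom and monotonicity of $\perp$, and notably the Fréchet condition and axiom~3 of regularity are not needed.
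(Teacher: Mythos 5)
Your proof is correct and follows essentially the same route as the paper's: both invoke the $\perp$-span supplied by axiom~2 of regularity and then use monotonicity plus the union axiom to conclude $x\perp X\setminus A^\perp$ (and, as you note, neither the Fr\'echet condition nor axiom~3 is needed). The only difference is bookkeeping: the paper works with a set $X'\subset A^\perp$ containing $x$ satisfying $x\perp X\setminus X'$, so that $X\setminus A^\perp\subset X\setminus X'$, whereas you work on the complementary side, trapping $X\setminus A^\perp$ inside $A\cup D'$ --- the same argument read from opposite directions.
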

\begin{proof}
It suffices to show $A^\perp\subset (X\setminus A^\perp)^\perp$.
Suppose $x\in A^\perp$, i.e. $x\in X\setminus A$ and $x\perp A$.
Therefore there is $X'\subset X\setminus A$ containing $x$ such that $x\perp X\setminus X'$
and $X'\perp A$. Hence $X'\subset A^\perp $ resulting in $X\setminus A^\perp\subset X\setminus X'$ and, consequently, $x\perp X\setminus A^\perp$.

\end{proof}

\begin{Proposition}
The topology induced by a functional orthogonality relation $\perp$ on a Hausdorff space $(X,\mathcal{T})$ equals $\mathcal{T}$  if and only if $X$ is completely regular (Tychonoff).
\end{Proposition}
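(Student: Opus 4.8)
The plan is to prove the two inclusions between $\mathcal{T}$ and the induced topology, which I will call $\mathcal{T}_\perp$, separately, and to notice that one of them holds for an arbitrary topological space while the other is precisely equivalent to complete regularity. The technical engine in both directions is a single rescaling lemma, which I would isolate first: if $f\colon X\to[0,1]$ is continuous with $f(x)=0$ and $f(A)\subset\{1\}$ (so that $f$ witnesses $x\perp A$), then the $\mathcal{T}$-open set $f^{-1}([0,1/2))$ is contained in $A^\perp$. Indeed, for any $y$ with $f(y)<1/2$ the function $\max(0,2f-1)$ is continuous, vanishes at $y$, equals $1$ on $A$, and forces $y\notin A$; hence $y\perp A$ and $y\in A^\perp$. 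Since $f(x)=0$, this exhibits a $\mathcal{T}$-neighborhood of $x$ inside $A^\perp$, so every basic set $A^\perp$ is $\mathcal{T}$-open. Consequently $\mathcal{T}_\perp\subseteq\mathcal{T}$ with no separation hypothesis whatsoever.

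For the implication that complete regularity forces $\mathcal{T}=\mathcal{T}_\perp$, it remains to establish $\mathcal{T}\subseteq\mathcal{T}_\perp$. Given $U\in\mathcal{T}$, I would set $A=X\setminus U$, a closed set. The inclusion $A^\perp\subseteq X\setminus A=U$ always holds by the definition of $A^\perp$, so it suffices to show that every $x\in U$ lies in $A^\perp$; but this is exactly the content of complete regularity, which supplies a continuous $f\colon X\to[0,1]$ with $f(x)=0$ and $f(A)=\{1\}$, i.e. $x\perp A$. Thus $U=(X\setminus U)^\perp$ is a single basic $\mathcal{T}_\perp$-open set, giving $\mathcal{T}\subseteq\mathcal{T}_\perp$ and hence equality.

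For the converse I would assume $\mathcal{T}=\mathcal{T}_\perp$ and take a closed set $F$ with $x\notin F$. Then $U=X\setminus F\in\mathcal{T}=\mathcal{T}_\perp$, so there is a basic set $A^\perp$ with $x\in A^\perp\subseteq U$; fix a function $f$ witnessing $x\perp A$. By the rescaling lemma $f^{-1}([0,1/2))\subseteq A^\perp\subseteq X\setminus F$, whence $F\subseteq f^{-1}([1/2,1])$, and then $g=\min(1,2f)$ is continuous with $g(x)=0$ and $g(F)=\{1\}$, separating $x$ from $F$. This is the step I expect to be the crux: the basic set $A^\perp$ separates $x$ only from its own companion set $A$, and the argument must transfer this to the prescribed closed set $F$ through the inclusion $A^\perp\subseteq X\setminus F$ together with the two rescalings $\max(0,2f-1)$ and $\min(1,2f)$.

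Finally, I would remark that the standing Hausdorff (hence $T_1$) hypothesis is what upgrades the complete regularity established above to the full Tychonoff property, so that the two formulations in the statement coincide; the Hausdorff assumption plays no role in either inclusion of topologies itself.
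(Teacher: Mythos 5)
Your proof is correct. For comparison, the paper's entire proof is a single sentence: \emph{``Being completely regular means exactly that $\{x\}\perp\{y\}$ if $x\ne y$ and that $\{x\}\perp X\setminus U$ if $U$ is open and $x\in U$''} --- i.e.\ the paper identifies complete regularity with an orthogonality condition and leaves the passage from that condition to the actual equality of topologies entirely to the reader. Your write-up rests on the same core observation but supplies everything the paper omits: the rescaling lemma showing each basic set $A^\perp$ is $\mathcal{T}$-open via $f^{-1}([0,1/2))\subseteq A^\perp$ (so that $\mathcal{T}_\perp\subseteq\mathcal{T}$ holds with no separation hypothesis), the identification $U=(X\setminus U)^\perp$ under complete regularity (so that each $\mathcal{T}$-open set is a single basic $\mathcal{T}_\perp$-set), and --- the genuine crux, invisible in the paper's one-liner --- the transfer step in the converse: a basic neighborhood $A^\perp\subseteq X\setminus F$ only witnesses separation of $x$ from its companion set $A$, and one needs the inclusion $f^{-1}([0,1/2))\subseteq A^\perp$ together with the rescaling $\min(1,2f)$ to separate $x$ from the prescribed closed set $F$. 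So the two arguments are the same in spirit, but yours is a complete proof where the paper's is a sketch; your version also makes explicit the useful facts that one inclusion of topologies is unconditional and that the Hausdorff hypothesis plays no role beyond upgrading complete regularity to the Tychonoff property.
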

\begin{proof}
Being completely regular means exactly that $\{x\}\perp \{y\}$ if $x\ne y$ and that
$\{x\}\perp X\setminus U$ if $U$ is open and $x\in U$.
\end{proof}

\begin{Proposition}\label{MainPropOnNormalRelations}
If $\perp$ is a normal orthogonal relation on a set $X$ and $C\perp D$, then there exist subsets $E$ and $F$ of $X$ such that $C\subset E^\perp$, $D\subset F^\perp$ and
$E^\perp \perp F^\perp$.
\end{Proposition}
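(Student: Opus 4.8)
The plan is to imitate the classical topological fact that a normal space separates two separated sets $C, D$ not merely by disjoint open sets but by open sets whose closures remain disjoint, where the proof applies normality twice. Here the role of ``closure'' is played by the elementary observation that $A^\perp\subset X\setminus A$ for every $A$: once $X\setminus A$ is orthogonal to some set $G$, the smaller set $A^\perp$ is automatically orthogonal to $G$ as well, using that a subset of a $\perp$-orthogonal set is again orthogonal (an immediate consequence of Axiom~2). The engine of the whole argument is the span property built into normality: whenever $P\perp Q$ we may decompose $X=P'\cup Q'$ with $P'\perp Q$ and $Q'\perp P$, and by the Remark following Definition \ref{SpanDef} we may additionally arrange $P\subset P'$ and $Q\subset Q'$.

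First I would apply the span property to the given pair $C\perp D$, obtaining a decomposition $X=S_0\cup T_0$ with $C\subset S_0$, $D\subset T_0$, $S_0\perp D$, and $T_0\perp C$. I then set $E:=X\setminus S_0$. Since $C\subset S_0$ we have $C\cap E=\emptyset$, and since $E=X\setminus S_0\subset T_0\perp C$ we get $E\perp C$; together these give $C\subset E^\perp$. Moreover $E^\perp\subset X\setminus E=S_0$, and $S_0\perp D$ forces $E^\perp\perp D$. Thus after one step the pair $(E^\perp,D)$ behaves like a shrunken neighborhood of $C$ that is still orthogonal to $D$.

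Next I would repeat the construction on the pair $E^\perp\perp D$. Applying the span property again produces $X=S\cup T$ with $D\subset S$, $E^\perp\subset T$, $S\perp E^\perp$, and $T\perp D$. Setting $F:=X\setminus S$, the same two checks give $D\cap F=\emptyset$ and $F\subset T\perp D$, hence $D\subset F^\perp$; and $F^\perp\subset X\setminus F=S\perp E^\perp$, hence $E^\perp\perp F^\perp$. Combined with $C\subset E^\perp$ from the first step, this is exactly the desired conclusion.

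The individual steps are short; the only place demanding care is the bookkeeping with the orthogonality axioms — repeatedly using that a subset of a $\perp$-orthogonal set is orthogonal, and that $P\perp D$ together with $P'\perp D$ yields $(P\cup P')\perp D$ — and, above all, keeping straight which of the two spanning pieces is orthogonal to which set when invoking the span property, since the argument hinges on pairing $E^\perp$ (contained in $X\setminus F=S$) with the correct factor. I do not anticipate a genuine obstacle, only the discipline of tracking the complement identity $A^\perp\subset X\setminus A$ consistently at both applications.
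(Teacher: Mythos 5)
Your proof is correct, and its two-step skeleton is the same as the paper's: a first application of normality to $C\perp D$ yields $E$ with $C\subset E^\perp$ and $E^\perp\perp D$, and a second application to the pair $(E^\perp,D)$ yields $F$ --- this is exactly what the paper's terse ``repeat the same procedure'' means. Where you genuinely differ is in how the spanning decomposition is handled. The paper takes the \emph{disjoint} form of the span, and therefore must first dispose of the possibly non-empty intersection $B=C\cap D$: it observes that $B$ is self-orthogonal, invokes the third normality axiom to conclude $B$ is $\perp$-bounded, sets $E=D'\setminus B$, and uses $B\perp X$ to see $(C'\cup B)\perp D$ and hence $E^\perp\perp D$. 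You instead take the \emph{containment} form from the Remark after Definition \ref{SpanDef} (enlarging the spanning pieces is harmless by Axiom 2) and define $E$ and $F$ as complements of the enlarged pieces, so that $E\subset T_0$ and $E^\perp\subset S_0$ fall directly out of Definition \ref{PerpADef}, with no bounded sets appearing anywhere. What this buys: your argument uses only the span condition (condition 2 of normality) together with the orthogonality axioms --- it never needs the Fr\'echet condition or the axiom that self-orthogonal sets are bounded --- so it proves the proposition under a strictly weaker hypothesis; it also sidesteps a detail the paper leaves implicit, namely that arranging $C\setminus B\subset C'$ and $D\setminus B\subset D'$ with $C'$, $D'$ disjoint requires an extra adjustment of the raw span that the paper does not spell out.
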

\begin{proof}
Notice $B=C\cap D$ is $\perp$-bounded. Find disjoint sets $C'$ containing $C\setminus B$ and $D'$ containing $D\setminus B$ whose union is $X$ and $C'\perp D$,
$D'\perp C$. Put $E=D'\setminus B$ and notice $C\subset E^\perp\subset C'\cup B$,
so $E^\perp \perp D$. Repeat the same procedure to create $F$.
\end{proof}

\section{Proximity spaces}
There is a more general structure than uniform spaces, namely a proximity (see \cite{NaWa}). In this section we show that those structures correspond to normal small scale orthogonal relations.
\begin{Definition}
A \textbf{proximity space} $(X, \delta)$ is a set $X$ with a relation $\delta$ between subsets of $X$ satisfying the following properties:\\
For all subsets $A, B$ and $C$ of $X$\\
1. $A \delta B \implies B \delta A$\\
2. $A \delta B \implies A \ne\emptyset$\\
3. $A\cap B\ne\emptyset \implies A \delta B$\\
4. $A \delta (B\cup C) \iff (A \delta B \mbox{ or } A \delta C)$\\
5. $\forall E, A \delta E \mbox{ or }B \delta (X\setminus E) \implies A \delta B$.
\end{Definition}

\begin{Proposition}
Normal small scale orthogonal relations are in one-to-one correspondence with proximity relations.
\end{Proposition}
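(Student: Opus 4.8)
The plan is to realise the correspondence by taking $\perp$ and $\delta$ to be negations of one another: given an orthogonality relation $\perp$ I would define the proximity by $A\,\delta\,B$ iff $A\not\perp B$ (``near'' means ``not far''), and given a proximity $\delta$ I would define $A\perp B$ iff $A\,\bar\delta\,B$. These two assignments are visibly mutually inverse, so the whole statement reduces to checking that each assignment lands in the intended class; that in turn reduces to a dictionary translating the orthogonality axioms into the proximity axioms under negation.

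First I would record the elementary translations. Symmetry of $\perp$ is exactly axiom 1. The union axiom $A\perp(C\cup C')\iff A\perp C$ and $A\perp C'$ becomes, by De Morgan, precisely axiom 4; writing $C'=C\cup(C'\setminus C)$ it also yields monotonicity of $\perp$ (if $A\perp B$, $A'\subset A$, $B'\subset B$ then $A'\perp B'$), exactly as in the proof of the earlier Proposition relating dot products and orthogonality. Combining $\emptyset\perp X$ with monotonicity gives $\emptyset\perp B$ for every $B$, i.e. $\emptyset\,\bar\delta\,B$ for all $B$, which is axiom 2; the converse is immediate. Small scale --- that $\emptyset$ is the only self-orthogonal set --- translates into axiom 3: if $A\cap B\ne\emptyset$, pick $x\in A\cap B$; were $A\perp B$, monotonicity would give $\{x\}\perp\{x\}$, hence $\{x\}=\emptyset$, a contradiction, so $A\,\delta\,B$. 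Conversely, axiom 3 applied with $A=B$ recovers small scale.

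The technical crux is pairing the span condition of normality with axiom 5. I would read axiom 5 in its contrapositive form, $A\,\bar\delta\,B\implies \exists E$ with $A\,\bar\delta\,E$ and $B\,\bar\delta\,(X\setminus E)$, that is, $A\perp B\implies \exists E$ with $A\perp E$ and $B\perp(X\setminus E)$. Invoking the Remark after Definition \ref{SpanDef}, I may take the $\perp$-span decomposition of $X$ to be a partition $X=C'\cup(X\setminus C')$; then setting $E=X\setminus C'$ turns condition 2 of ``$C$ and $D$ $\perp$-span $X$'' into exactly the witness demanded by axiom 5, while the reverse reading reconstructs the decomposition from a given $E$. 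The only care needed is in tracking the complement $X\setminus E$ and the monotonicity steps; once the quantifiers are aligned no genuine difficulty remains here.

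The place I expect to be the real obstacle is the Fr\'echet clause of normality, which is \emph{not} delivered by axioms 1--5 alone: under the negation dictionary, Fr\'echet ($\{x\}\perp\{y\}$ for $x\ne y$) corresponds to the separation property $\{x\}\,\bar\delta\,\{y\}$ for distinct points, whereas axiom 3 only forces $\{x\}\,\delta\,\{x\}$ and leaves distinct singletons unconstrained (the indiscrete proximity $A\,\delta\,B\iff A,B\ne\emptyset$ satisfies 1--5 but fails Fr\'echet). Consequently the clean bijection is between normal small scale orthogonality relations and \emph{separated} proximities, so I would either read ``proximity'' here as separated or adjoin separation explicitly as the extra input matching Fr\'echet. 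The last normality clause, $B\perp B\Rightarrow B$ bounded, is automatic in the small scale case, since then $B\perp B$ forces $B=\emptyset$ and $\emptyset\perp X$. Assembling these translations shows each assignment lands in the intended class, which establishes the one-to-one correspondence.
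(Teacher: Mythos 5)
Your correspondence is exactly the paper's: it defines $A\,\delta\,C$ as $\lnot(A\perp C)$ and vice versa, declares that "the proof amounts to negating implications," and then verifies only proximity axiom 3 (using self-orthogonality of $A\cap B$ where you use the singleton $\{x\}$ --- both work). Your axiom-by-axiom dictionary --- axiom 4 as the De Morgan dual of the union axiom, axiom 2 from $\emptyset\perp X$ plus monotonicity, the contrapositive of axiom 5 matched against the $\perp$-span condition via $E=X\setminus C'$, and the observation that clause 3 of normality is vacuous in the small scale case since $B\perp B$ forces $B=\emptyset$ --- is a correct and complete execution of what the paper leaves implicit.

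The point where you go beyond the paper is also where you are right to press: the Fr\'echet clause. The paper's five proximity axioms do not include separation, and your counterexample is valid. On any set with at least two points, $A\,\delta\,B\iff A\ne\emptyset$ and $B\ne\emptyset$ satisfies axioms 1--5, yet its negation, $A\perp B\iff A=\emptyset$ or $B=\emptyset$, is a small scale orthogonality relation satisfying the span condition and clause 3 of normality but \emph{not} Fr\'echet, hence not normal. Consequently the negation map from normal small scale relations into proximities is injective but not onto: its image is precisely the \emph{separated} proximities (those with $\{x\}\,\delta\,\{y\}\implies x=y$), which is the class matching Fr\'echet under negation. The paper's one-line proof glosses over this, and its statement is imprecise as written; your repair --- reading "proximity" as "separated proximity," or equivalently dropping Fr\'echet from the hypotheses on $\perp$ --- is the correct one, and it is the standard convention in the proximity literature, where separation is an additional axiom rather than part of the definition.
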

\begin{proof}
Given a small scale orthogonal relation $\perp$ we define $A\delta C$ as $\lnot (A\perp C)$.

Conversely, given a proximity relation $\delta$ we define $A\perp C$ as $\lnot (A\delta C)$.

The proof amounts to negating implications, so let's show only the implication $A\cap B\ne\emptyset \implies A \delta B$. If it fails, then we have two orthogonal sets $A$ and $B$ with non-empty intersection $A\cap B$. However, in this case $A\cap B$ is self-orthogonal, a contradiction.
\end{proof}

\section{Asymptotic resemblance}

S. Kalantari and B. Honari \cite{KalHon} introduced an equivalence relation $\lambda$
between subsets of a set $X$ called asymptotic resemblance.
In this section we show that, under natural condition of all points of $X$ being equivalent,
each asymptotic resemblance induces an orthogonal relation.

\begin{Definition}
\textbf{Asymptotic resemblance} $\lambda$
between subsets of a set $X$ is an equivalence relation satisfying the following properties:\\
1. $A_1\lambda B_1$ and $A_2\lambda B_2$ implies $(A_1\cup A_2)\lambda (B_1\cup B_2)$.\\
2. $A\lambda (B_1\cup B_2)$ and $B_1, B_2\ne\emptyset$ implies existence
of non-empty subsets $A_1, A_2$ of $A$ such that $A=A_1\cup A_2$,
$A_1\lambda B_1$, and $A_2\lambda B_2$.
\end{Definition}

\begin{Proposition}
If $\lambda$ is an asymptotic resemblance relation on subsets of $X$ such that
$x\lambda y$ for all $x, y\in X$, then the relation $A\perp C$ defined using the three steps below is an orthogonal relation.\\
1. First, we define $A\le C$ as $C\lambda (A\cup C)$.\\
2. Second, we define $B$ to be bounded if $B\le A$ for all $A\subset X$, $A\ne\emptyset$.\\
3. Third, we define $A\perp C$ if $B\le A$ and $B\le C$ implies $B$ is bounded.
\end{Proposition}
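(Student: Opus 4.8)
The plan is to verify the three requirements of an orthogonality relation for $\perp$: symmetry, the axiom $\emptyset\perp X$, and the union axiom $A\perp(C\cup C')\iff A\perp C$ and $A\perp C'$. Symmetry is immediate, since the defining condition ``$B\le A$ and $B\le C$ implies $B$ bounded'' is already symmetric in $A$ and $C$. Everything else reduces to a handful of monotonicity facts about the auxiliary relation $\le$, so I would isolate those first.

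The key preliminary lemmas are: (i) $\le$ is reflexive (as $A\lambda A$) and satisfies a \emph{sandwich} property, namely if $C\subseteq E\subseteq F$ and $C\lambda F$ then $C\lambda E$; (ii) consequently $\le$ is increasing in its second argument ($B\le C$, $C\subseteq D\Rightarrow B\le D$) and decreasing in its first ($B'\subseteq B$, $B\le C\Rightarrow B'\le C$); (iii) $\emptyset\lambda B$ forces $B=\emptyset$; and (iv) a finite union of bounded sets is bounded. The sandwich property is the workhorse: from $C\lambda F$ and $E\lambda E$, property~1 of $\lambda$ gives $(F\cup E)\lambda(C\cup E)$, which simplifies to $F\lambda E$ because of the inclusions, and transitivity then yields $C\lambda E$. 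Facts (i)--(ii) follow directly, and (iv) follows from boundedness of each summand together with transitivity of $\lambda$ (enlarge a nonempty $A$ by $B_1$, then by $B_2$, chaining the two equivalences). For (iii) I would feed $\emptyset\lambda B$ into property~2: any nonempty $B$ can be written as $B=B_1\cup B_2$ with $B_1,B_2\ne\emptyset$ (taking $B_1=B_2=B$ if $B$ is a singleton), and property~2 would then demand nonempty subsets of $\emptyset$, which is absurd.

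With these in hand, $\emptyset\perp X$ and indeed $A\perp\emptyset$ for every $A$ become automatic: if $B\le\emptyset$ then $\emptyset\lambda B$, so $B=\emptyset$ by (iii), and $\emptyset$ is trivially bounded. The forward direction of the union axiom is then pure monotonicity: if $A\perp(C\cup C')$ and $B\le A$, $B\le C$, then $B\le C\cup C'$ by (ii), whence $B$ is bounded; so $A\perp C$, and symmetrically $A\perp C'$. The degenerate cases of the union axiom (where $C$ or $C'$ is empty) collapse using $A\perp\emptyset$, so only $C,C'$ both nonempty requires real work.

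The hard part is the reverse implication: assuming $A\perp C$ and $A\perp C'$ with $C,C'\ne\emptyset$, show $A\perp(C\cup C')$. Given $B$ with $B\le A$ and $B\le(C\cup C')$, I would unfold the latter as $(B\cup C\cup C')\lambda(C\cup C')$ and apply property~2 to the decomposition $C\cup C'$ of the right-hand side, obtaining $B\cup C\cup C'=P_1\cup P_2$ with $P_1\lambda C$ and $P_2\lambda C'$. Setting $B_i=B\cap P_i$ gives $B=B_1\cup B_2$; the sandwich lemma upgrades $P_1\lambda C$ to $B_1\le C$ (and $B_2\le C'$), while the decreasing monotonicity in (ii) yields $B_1\le A$ and $B_2\le A$ from $B\le A$. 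Then $A\perp C$ makes $B_1$ bounded, $A\perp C'$ makes $B_2$ bounded, and (iv) makes $B=B_1\cup B_2$ bounded, which is exactly what $A\perp(C\cup C')$ requires. The single genuine obstacle is arranging this splitting so that both pieces simultaneously lie below $A$ and below the respective $C$, $C'$; once property~2 is invoked and the sandwich and monotonicity lemmas are in place, the remainder is bookkeeping. (The standing hypothesis $x\lambda y$ for all $x,y$ is not actually needed for these three axioms; its role is to force every singleton to be bounded, i.e.\ to make $\perp$ large scale.)
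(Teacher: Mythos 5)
Your proof is correct and takes essentially the same route as the paper's: both establish monotonicity/transitivity facts for $\le$ to get the easy direction of the union axiom, and both handle the hard direction by applying property 2 of $\lambda$ to split the test set into two pieces, each $\le$ one of $C$, $C'$, hence each bounded, hence the union bounded. The only difference is that you spell out details the paper leaves implicit (the sandwich lemma, closure of bounded sets under finite union, the fact that $\emptyset\lambda B$ forces $B=\emptyset$, and the degenerate case where $C$ or $C'$ is empty, which the paper's invocation of property 2 tacitly excludes).
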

\begin{proof}
Notice $C'\subset C$ implies $C'\le C$ and $A\le C$, $C\le D$ implies $A\le D$.
Consequently, $A\perp C$ and $C'\subset C$ implies $A\perp C'$.

Assume $A\perp C_1$ and $A\perp C_2$. If not $A\perp (C_1\cup C_2)$,
then there is an unbounded set $D$ such that
$A\lambda (A\cup D)$ and $(C_1\cup C_2)\lambda (C_1\cup C_2\cup D)$.
Now, we can split $D$ as $D_1\cup D_2$ so that $D_1\le C_1$ and $D_2\le C_2$.
Therefore, both $D_1$ and $D_2$ are bounded resulting in $D=D_1\cup D_2$ being bounded, a contradiction.
\end{proof}

\section{Morphisms}


\begin{Definition}
Given two sets $X$ and $Y$ equipped with orthogonality relations $\perp_X$ and $\perp_Y$, a function $f:X\to Y$ is \textbf{$\perp$-continuous}
if 
$$A\perp_Y C\implies f^{-1}(A)\perp_X f^{-1}(C)$$
for all subsets $A, C$ of $Y$.
\end{Definition}


\subsection{Small Scale Examples}
In the small scale $\perp$-continuous functions are exactly neighborhood-continuous functions with respect to the induced neighborhood operator. Therefore both examples below follow from \cite{DW} in view of \ref{PerpContinuousVsNbhdCont}.
\begin{Example}
If both $X$ and $Y$ are normal spaces equipped with topological orthogonality relations, then $\perp$-continuity is ordinary \textbf{topological continuity}.
\end{Example}

\begin{Example}
If both $X$ and $Y$ are uniform spaces equipped with uniform orthogonality relations, then $\perp$-continuity is ordinary \textbf{uniform continuity}.
\end{Example}

\subsection{Large Scale Examples}
\begin{Example}
If both $X$ and $Y$ are metric spaces equipped with metric $ls$-orthogonality relations
and $f:X\to Y$ preserves bounded sets, then $\perp$-continuity is the same as $f$ being \textbf{coarse and bornologous}.
\end{Example}
\begin{proof}
Recall that $f:X\to Y$ is bornologous if, for each $r > 0$, there is $s > 0$ such that
$\diam(f(A)) < s$ if $\diam(A) < r$. 

Notice that every $\perp$-continuous function co-preserves bounded sets, so it is coarse.
Suppose $f$ is $\perp$-continuous but not bornologous. Hence, there is a sequence
$B_n$ of uniformly bounded subsets of $X$ whose images $f(B_n)$ have diameters diverging to infinity. We may reduce it to the case of each $B_n$ consisting of exactly two points $x_n$ and $y_n$ so that both $f(x_n)$ and $f(y_n)$ diverge to infinity.
Notice $A:=\{f(x_n)\}_{n\ge 1}$ and $C:=\{f(y_n)\}_{n\ge 1}$ are orthogonal in $Y$
but their point-inverses are not orthogonal in $X$, a contradiction.

Suppose $f$ is coarse and bornologous but not $\perp$-continuous. Choose two orthogonal subsets
$A$ and $C$ of $Y$ whose point-inverses are not orthogonal.
Therefore the intersection of $B(f^{-1}(A),r)$ and $B(f^{-1}(C),r)$ is unbounded
for some $r > 0$ and the image of that intersection is unbounded. There is $s > 0$ satisfying
$f(B(Z,r))\subset B(f(Z),s)$ for all subsets $Z$ of $X$.
Therefore, the intersection of $B(A,s)$ and $B(C,s)$ is unbounded, a contradiction.
\end{proof}

\begin{Example}
If $X$ is a metric space equipped with metric ls-orthogonality relation and $Y$ is a compact metric space equipped with small scale metric orthogonality, then $\perp$-continuity is the same as $f$ being \textbf{slowly oscillating}.
\end{Example}
\begin{proof}
Recall that $f:X\to Y$ is slowly oscillating if, for every pair of sequences $\{x_n\}_{n\ge 1}$, $\{y_n\}_{n\ge 1}$ in $X$, $\lim\limits_{n\to\infty}d_Y(f(x_n),f(y_n))=0$
if $\{d_X(x_n,y_n)\}_{n\ge 1}$ is uniformly bounded.

Suppose $f$ is $\perp$-continuous but not slowly oscillating. Hence, there is 
pair of sequences $\{x_n\}_{n\ge 1}$, $\{y_n\}_{n\ge 1}$ in $X$, 
and $\epsilon > 0$ such that $d_Y(f(x_n),f(y_n)) > \epsilon$ for each $n\ge 1$
and $\{d_X(x_n,y_n)\}_{n\ge 1}$ is uniformly bounded.
We may assume that the limit of $f(x_n)$ is $z_1$, the limit of $f(y_n)$ is $z_2$.
In particular $d_Y(z_1,z_2)\ge \epsilon$.
The sets $B(z_1,\epsilon/3)$ and $B(z_2,\epsilon/3)$ are orthogonal in $Y$
but their point-inverses in $X$ are not, a contradiction.

Suppose $f$ is slowly oscillating but not $\perp$-continuous. Choose two orthogonal subsets
$A$ and $C$ of $Y$ whose point-inverses are not orthogonal.
Therefore the intersection of $B(f^{-1}(A),r)$ and $B(f^{-1}(C),r)$ is unbounded
for some $r > 0$. 
Therefore there are two sequences diverging to infinity in $X$: $\{x_n\}_{n\ge 1}$
in $f^{-1}(A)$ and $\{y_n\}_{n\ge 1}$
in $f^{-1}(C)$ such that $d_X(x_n,y_n) < 2r$ for each $n$.
Consequently, $\lim\limits_{n\to\infty}d_Y(f(x_n),f(y_n))=0$
contradicting orthogonality of $A$ and $C$.
\end{proof}

\subsection{Quotient structures}

It is well-known that defining quotient maps in both the uniform category and in the coarse category is tricky. In contrast, in sets equipped with orthogonality relations it is quite easy.

\begin{Definition}
Suppose $\perp_X$ is an orthogonality relation on a set $X$. Given a surjective function $f:X\to Y$ define $C\perp_Y D$ to mean $f^{-1}(C)\perp_X f^{-1}(D)$.
\end{Definition}

It is easy to check that $\perp_Y$ is an orthogonality relation on $Y$, called the \textbf{quotient orthogonality relation}. Also, it is clear that the following holds:

\begin{Proposition}
Suppose $\perp_X$ is an orthogonality relation on a set $X$, $f:X\to Y$ is a surjective function, and $Y$ is equipped with the quotient orthogonality relation $\perp_Y$.
Given any $\perp$-continuous $h:X\to Z$ that is constant on fibers of $f$,
there is unique $\perp$-continuous $g:Y\to Z$ such that $h=g\circ f$.
\end{Proposition}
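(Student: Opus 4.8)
The plan is to build $g$ first as a bare map of sets via the standard universal property of set-theoretic quotients, and then to verify $\perp$-continuity by a one-line preimage computation that reduces it verbatim to the $\perp$-continuity of $h$.

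First I would construct $g$. Since $f$ is surjective, every $y\in Y$ has a nonempty fiber $f^{-1}(y)$; choose any $x\in f^{-1}(y)$ and set $g(y):=h(x)$. Because $h$ is constant on the fibers of $f$, the value $h(x)$ does not depend on the choice of $x\in f^{-1}(y)$, so $g$ is well-defined, and by construction $g\circ f=h$. Uniqueness is immediate from surjectivity of $f$: if $g'\circ f=h=g\circ f$, then for every $y\in Y$ we may write $y=f(x)$ and obtain $g'(y)=g'(f(x))=h(x)=g(f(x))=g(y)$, so $g'=g$.

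The remaining content is that $g$ is $\perp$-continuous, which is a direct diagram chase using the definition of the quotient relation $\perp_Y$. Let $A,C\subseteq Z$ with $A\perp_Z C$. By definition of $\perp_Y$, the assertion $g^{-1}(A)\perp_Y g^{-1}(C)$ is literally the assertion $f^{-1}(g^{-1}(A))\perp_X f^{-1}(g^{-1}(C))$. Using $h=g\circ f$ and the preimage identity $(g\circ f)^{-1}=f^{-1}\circ g^{-1}$, we have $f^{-1}(g^{-1}(A))=h^{-1}(A)$ and $f^{-1}(g^{-1}(C))=h^{-1}(C)$. Since $h$ is $\perp$-continuous and $A\perp_Z C$, we get $h^{-1}(A)\perp_X h^{-1}(C)$, which is exactly what was required.

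There is no real obstacle here; the only point demanding care—and it is minor—is keeping the two layers straight, namely that $\perp_Y$ is defined through $f^{-1}$ while continuity is tested through $g^{-1}$, and that these combine through $(g\circ f)^{-1}=f^{-1}\circ g^{-1}$. Once this alignment is made explicit, the continuity of $g$ collapses onto the continuity of $h$ with nothing further to check.
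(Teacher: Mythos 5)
Your proof is correct and is precisely the argument the paper has in mind: the paper states this proposition without proof (introducing it with ``it is clear that the following holds''), and your two steps --- constructing $g$ by the set-theoretic universal property and reducing $\perp$-continuity of $g$ to that of $h$ via the identity $f^{-1}(g^{-1}(A))=h^{-1}(A)$ and the definition of $\perp_Y$ --- are exactly the routine verification being omitted. Nothing is missing.
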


\section{Neighborhood operators}

This section is devoted to explore the relation between orthogonal relations and neighborhood operators.

\begin{Definition} \cite{DW}
A \textbf{neighborhood operator} $\prec$ on a set $X$ is a relation between its subsets satisfying the following conditions:

\begin{itemize}
\item[$\mathsf{(N0)}$] $A \prec X$ for all $A \subseteq X$.
\item[$\mathsf{(N1)}$] if $A \prec B$ then $X \setminus B \prec X \setminus A$.
\item[$\mathsf{(N2)}$] if $A \prec B \subseteq C$, then $A \prec C$.
\item[$\mathsf{(N3)}$] if $A \prec N$ and $A' \prec N'$ then $A \cup A' \prec N \cup N'$.
\end{itemize}
\end{Definition}

\begin{Observation}
Note that $\mathsf{(N0)}$ is implied by $\mathsf{(N1)}$ and the condition $X \prec X$. Also, it is easy to see that, together, axioms $\mathsf{(N0)}-\mathsf{(N3)}$ imply:

\begin{itemize}
\item[$\mathsf{(N0')}$] $\varnothing \prec A$ for all $A \subseteq X$.
\item[$\mathsf{(N2')}$] if $A \subseteq B \prec C$ then $A \prec C$.
\item[$\mathsf{(N3')}$] if $A \prec N$ and $A' \prec N'$ then $A \cap A' \prec N \cap N'$.
\end{itemize}
\end{Observation}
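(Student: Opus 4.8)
The plan is to treat axiom $\mathsf{(N1)}$ as a complementation duality: since $X\setminus(X\setminus A)=A$, the implication in $\mathsf{(N1)}$ is in fact an equivalence, so $A\prec B$ holds if and only if $X\setminus B\prec X\setminus A$. Every assertion in the Observation is then the De Morgan dual of an axiom, obtained by applying this equivalence on each side and using the set identities $X\setminus(N\cap N')=(X\setminus N)\cup(X\setminus N')$ and $X\setminus(A\cap A')=(X\setminus A)\cup(X\setminus A')$. I would prove the statements in the order $\mathsf{(N2')}$, then $\mathsf{(N0)}$, then $\mathsf{(N0')}$, then $\mathsf{(N3')}$, since the passage from $X\prec X$ to $\mathsf{(N0)}$ is cleanest once left-monotonicity is in hand.

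First I would establish $\mathsf{(N2')}$. Assume $A\subseteq B\prec C$. Dualizing $B\prec C$ by $\mathsf{(N1)}$ gives $X\setminus C\prec X\setminus B$, and $A\subseteq B$ yields $X\setminus B\subseteq X\setminus A$, so $\mathsf{(N2)}$ upgrades this to $X\setminus C\prec X\setminus A$; a second application of $\mathsf{(N1)}$ returns $A\prec C$. With $\mathsf{(N2')}$ available, $\mathsf{(N0)}$ is immediate: for any $A\subseteq X$ we have $A\subseteq X\prec X$, hence $A\prec X$. I should flag that, read literally, the first sentence of the Observation also relies on $\mathsf{(N2)}$ and not on $\mathsf{(N1)}$ alone: indeed $\mathsf{(N1)}$ together with $X\prec X$ only produces $\varnothing\prec\varnothing$ and can never shrink the left-hand side, so it is $\mathsf{(N2)}$ that licenses the step from $X\prec X$ to $A\prec X$.

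Next, $\mathsf{(N0')}$ falls out of $\mathsf{(N0)}$ and $\mathsf{(N1)}$: applying $\mathsf{(N0)}$ to the set $X\setminus A$ gives $X\setminus A\prec X$, and dualizing by $\mathsf{(N1)}$ yields $\varnothing\prec A$. Finally, for $\mathsf{(N3')}$ assume $A\prec N$ and $A'\prec N'$. Dualizing both by $\mathsf{(N1)}$ gives $X\setminus N\prec X\setminus A$ and $X\setminus N'\prec X\setminus A'$; feeding these into $\mathsf{(N3)}$ produces $(X\setminus N)\cup(X\setminus N')\prec(X\setminus A)\cup(X\setminus A')$, which by De Morgan reads $X\setminus(N\cap N')\prec X\setminus(A\cap A')$; one last application of $\mathsf{(N1)}$ gives $A\cap A'\prec N\cap N'$.

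There is no genuine obstacle beyond bookkeeping. The only subtlety to keep track of is that $\mathsf{(N1)}$ is an involution and must be invoked twice in each argument (once to pass to complements, once to return), and that the honest derivations of $\mathsf{(N2')}$ and $\mathsf{(N0)}$ quietly use $\mathsf{(N2)}$ alongside $\mathsf{(N1)}$.
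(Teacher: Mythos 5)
Your proof is correct, and it is the standard complementation-duality argument that the paper's Observation leaves to the reader without proof (``it is easy to see''); all four derivations ($\mathsf{(N2')}$, $\mathsf{(N0)}$, $\mathsf{(N0')}$, $\mathsf{(N3')}$) go through exactly as you write them. Your flag about the first sentence is also well taken and worth recording: $\mathsf{(N1)}$ together with $X \prec X$ alone only generates $\varnothing \prec \varnothing$ and $X \prec X$ (for instance, on $X=\{1,2\}$ the relation holding exactly for the pairs $(X,X)$ and $(\varnothing,\varnothing)$ satisfies $\mathsf{(N1)}$ and $X \prec X$ but not $\mathsf{(N0)}$), so the monotonicity axiom $\mathsf{(N2)}$ is genuinely needed, e.g.\ via your route $X \prec X \Rightarrow \varnothing \prec \varnothing \Rightarrow \varnothing \prec A \Rightarrow X\setminus A \prec X$, and the paper's statement should be read as asserting redundancy of $\mathsf{(N0)}$ within the full axiom system rather than a two-premise implication.
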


\begin{Definition}
A \textbf{normal neighborhood operator} $\prec$ satisfies the following condition:

\begin{itemize}
\item[$\mathsf{(N4)}$] for every pair of subsets $A \prec C$, there is a subset $B$ with $A \prec B \prec C$.
\end{itemize}
\end{Definition}

\begin{Proposition}
Each orthogonality relation $\perp$ on $X$ induces a neighborhood operator $\prec$ defined as follows: $A\prec U$ if $A\perp X\setminus U$ and $A\subset U$.\\
It is normal if and only if $\perp$ is normal.
\end{Proposition}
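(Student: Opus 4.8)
The plan is to verify the four neighborhood-operator axioms $\mathsf{(N0)}$--$\mathsf{(N3)}$ for the proposed $\prec$, and then match the normality condition $\mathsf{(N4)}$ against the $\perp$-normality axioms. Recall the definition: $A\prec U$ means $A\subset U$ and $A\perp X\setminus U$. Axiom $\mathsf{(N0)}$, i.e. $A\prec X$, reduces to $A\subset X$ and $A\perp\emptyset$; the latter follows from symmetry and $\emptyset\perp X$ together with the monotonicity of $\perp$ in each argument (a subset of an orthogonal set stays orthogonal), which is exactly the content established in the proof of the Proposition relating dot products and orthogonality. Axiom $\mathsf{(N1)}$ is immediate and essentially self-dual: if $A\subset U$ and $A\perp X\setminus U$, then writing $V=X\setminus A$ and $B=X\setminus U$ we get $B\subset V$ and, by symmetry of $\perp$, $B\perp X\setminus V = A$, which is precisely $X\setminus U\prec X\setminus A$.

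Next I would handle monotonicity $\mathsf{(N2)}$ and the union axiom $\mathsf{(N3)}$. For $\mathsf{(N2)}$: if $A\prec B\subseteq C$ then $A\subset B\subseteq C$ gives $A\subset C$, and since $X\setminus C\subseteq X\setminus B$ with $A\perp X\setminus B$, downward closure of $\perp$ in the second argument yields $A\perp X\setminus C$, hence $A\prec C$. For $\mathsf{(N3)}$: from $A\prec N$ and $A'\prec N'$ we have $A\cup A'\subseteq N\cup N'$, and we must show $A\cup A'\perp X\setminus(N\cup N')$. Set $Z=X\setminus(N\cup N')=(X\setminus N)\cap(X\setminus N')$. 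Since $Z\subseteq X\setminus N$ and $A\perp X\setminus N$, we get $A\perp Z$; similarly $A'\perp Z$. Then Axiom 2 of the orthogonality relation (the $A\perp(C\cup C')\iff A\perp C$ and $A\perp C'$ clause, read in the first slot via symmetry) gives $(A\cup A')\perp Z$, completing $\mathsf{(N3)}$.

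The genuinely interesting part is the equivalence of normality, and I expect the matching of $\mathsf{(N4)}$ to the $\perp$-normal axioms to be the main obstacle, since the definition of normal orthogonality mixes a Fr\'echet clause, a spanning clause, and a boundedness clause. The key translation is that $A\prec B\prec C$ asks for an intermediate set $B$ with $A\subset B$, $B\subset C$, $A\perp X\setminus B$, and $B\perp X\setminus C$. Starting from $A\prec C$, i.e. $A\perp X\setminus C$, the two orthogonal sets are $C_0:=A$ and $D_0:=X\setminus C$. The spanning clause of normality applied to $C_0\perp D_0$ furnishes a decomposition $X=C'\cup D'$ with $C'\perp D_0$ and $D'\perp C_0$; I would set $B:=X\setminus D'$ (equivalently the $C'$-side, up to the bounded overlap), so that $X\setminus B\subseteq D'$ gives $A=C_0\perp X\setminus B$, and $B\subseteq C'$ together with $C'\perp D_0=X\setminus C$ gives $B\perp X\setminus C$. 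One must be slightly careful that $A\subseteq B$ and $B\subseteq C$, which is where the decomposition may need to be fattened as in the Remark following Definition \ref{SpanDef} and trimmed by the bounded intersection exactly as in the proof of Proposition \ref{MainPropOnNormalRelations}; that earlier argument is the template I would reuse almost verbatim.

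For the converse direction of the normality equivalence, I would assume $\prec$ is normal and recover the $\perp$-normal axioms. The Fr\'echet and spanning clauses emerge by feeding disjoint orthogonal sets $C\perp D$ into the interpolation: taking $A=C$ and $C=X\setminus D$ gives $C\prec X\setminus D$, so $\mathsf{(N4)}$ produces $B$ with $C\prec B\prec X\setminus D$, and unpacking $\prec$ yields $C'=B$ and $D'=X\setminus B$ spanning $X$ with the required orthogonalities. The boundedness clause ($B\perp B$ implies $B$ is $\perp$-bounded) is the one place where the correspondence must be read carefully, and I would verify it by noting that self-orthogonality forces the relevant interpolating sets to collapse, so the cleanest route is to prove the two normality notions are definitionally inter-derivable through the dictionary $A\prec U\iff A\subset U$ and $A\perp X\setminus U$, checking each clause of the $\perp$-normal definition corresponds to an instance of $\mathsf{(N4)}$ and conversely. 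I would end by remarking that the Fr\'echet and boundedness clauses are preserved automatically since they are intrinsic to $\perp$ and are not altered by passing to $\prec$.
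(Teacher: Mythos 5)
Your verification of $\mathsf{(N0)}$--$\mathsf{(N3)}$ is correct (the paper itself leaves the entire proof to the reader, so there is no official argument to compare against). Your forward direction ($\perp$ normal $\Rightarrow$ $\mathsf{(N4)}$) also has the right idea, but the repair you gesture at is not quite the right one. After fattening the decomposition as in the Remark following Definition \ref{SpanDef}, so that $A\subseteq C'$ and $X\setminus C\subseteq D'$ with $C'\perp (X\setminus C)$ and $D'\perp A$, the clean choice is $B:=A\cup(X\setminus D')$ rather than $B:=X\setminus D'$: then $A\subseteq B$ by construction, $B\subseteq C$ because $X\setminus D'\subseteq X\setminus(X\setminus C)=C$, $A\perp X\setminus B$ follows from $X\setminus B\subseteq D'$ and downward closure, and $B\perp X\setminus C$ follows from $X\setminus D'\subseteq C'$ together with Axiom 2. ``Trimming by the bounded intersection'' as in Proposition \ref{MainPropOnNormalRelations} does not by itself restore the inclusions $A\subseteq B\subseteq C$, since orthogonal sets may still meet in a nonempty (merely bounded) set; the union trick avoids this entirely and uses only the spanning clause.

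The genuine gap is in your converse. First, from $\mathsf{(N4)}$ you can only feed in \emph{disjoint} pairs $C\perp D$ (you need $C\subseteq X\setminus D$ before $C\prec X\setminus D$ even makes sense), so interpolation yields the spanning clause only for disjoint orthogonal pairs; to upgrade to arbitrary orthogonal pairs one must absorb the overlap $C\cap D$ using clause 3 of $\perp$-normality, which at that point in the argument is not available. Second, and more seriously, your closing claim that the Fr\'echet and boundedness clauses are ``preserved automatically since they are intrinsic to $\perp$'' is not a proof step: in the only-if direction the sole hypothesis is $\mathsf{(N4)}$ for $\prec$, and these clauses are simply not derivable from it. Concretely, on any $X$ with at least two points define $A\perp C$ iff $A=\emptyset$ or $C=\emptyset$; this is an orthogonality relation, and the induced $\prec$ satisfies $\mathsf{(N4)}$ (if $A=\emptyset$ interpolate with $B=\emptyset$, and if the outer set is $X$ interpolate with $B=X$), yet $\perp$ is not Fr\'echet. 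So the proposition's ``only if'' direction fails as literally stated, and no argument can close that gap. The salvageable statement, which your dictionary does essentially establish, is: for a relation $\perp$ already satisfying the Fr\'echet and boundedness clauses, $\mathsf{(N4)}$ for $\prec$ is equivalent to the spanning clause for $\perp$ (disjointify using boundedness of $C\cap D$, then apply the disjoint case). You should either add those clauses as standing hypotheses or flag the statement itself as requiring this reading.
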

\begin{proof}
Left to the reader.
\end{proof}


\begin{Proposition}
Each neighborhood operator $\prec$ on $X$ induces a small scale orthogonality relation $\perp$ 
defined as follows: $A\perp U$ if $A\prec X\setminus U$.\\
It is normal if and only if $\prec$ is normal.
\end{Proposition}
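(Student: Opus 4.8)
The plan is to verify each axiom of an orthogonality relation for $\perp$ and then separately verify the equivalence of normality. Recall the definition: given a neighborhood operator $\prec$, we set $A\perp U$ exactly when $A\prec X\setminus U$. The three properties I must establish are symmetry, $\emptyset\perp X$, and $A\perp(C\cup C')\iff A\perp C \text{ and } A\perp C'$, together with the claim that the resulting relation is small scale.

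First I would check $\emptyset\perp X$, which unwinds to $\emptyset\prec X\setminus X=\emptyset$; this is precisely $\mathsf{(N0')}$ from the Observation. Next, for the union axiom, note $A\perp(C\cup C')$ means $A\prec X\setminus(C\cup C')=(X\setminus C)\cap(X\setminus C')$. The forward direction uses $\mathsf{(N2)}$ (enlarging the target), since $(X\setminus C)\cap(X\setminus C')$ is contained in both $X\setminus C$ and $X\setminus C'$, giving $A\prec X\setminus C$ and $A\prec X\setminus C'$, i.e. $A\perp C$ and $A\perp C'$. The reverse direction is exactly $\mathsf{(N3')}$: from $A\prec X\setminus C$ and $A\prec X\setminus C'$ we obtain $A\prec(X\setminus C)\cap(X\setminus C')$, which is $A\perp(C\cup C')$. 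Symmetry is the one point requiring care: from $A\prec X\setminus U$, axiom $\mathsf{(N1)}$ yields $U=X\setminus(X\setminus U)\prec X\setminus A$, which says $U\perp A$, so the relation is indeed symmetric.

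For the small scale claim, I would show the empty set is the only self-orthogonal subset. If $A\perp A$ then $A\prec X\setminus A$; but $A\subseteq X\setminus A$ forces $A\cap(X\setminus A)=A$ to be empty. More carefully, combining $A\prec X\setminus A$ with the inclusion $A\subseteq A$ and monotonicity, one deduces $A\subseteq X\setminus A$ (this is where I expect to invoke that $\prec$ refines inclusion, which follows from $\mathsf{(N2')}$ applied to $A\prec X\setminus A$ together with the identity neighborhood relation), hence $A=\emptyset$.

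The final and main content is the normality equivalence. Assuming $\prec$ is normal, suppose $C\perp D$, i.e. $C\prec X\setminus D$. By $\mathsf{(N4)}$ there is $B$ with $C\prec B\prec X\setminus D$. I would then set $C'=B$ and $D'=X\setminus B$ and check these exhibit $C$ and $D$ as $\perp$-spanning $X$: from $C\prec B$ we get $C\perp X\setminus B=D'$, and from $B\prec X\setminus D$ (equivalently $D\prec X\setminus B$ via $\mathsf{(N1)}$) we get $D\perp B=C'$, while $X=C'\cup D'$ by construction; the self-orthogonal-implies-bounded clause follows from the small scale property just established. Conversely, normality of $\perp$ gives the interpolant $B$ needed for $\mathsf{(N4)}$ by translating the span decomposition back through the correspondence. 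The hard part will be bookkeeping the passage between $\prec$ and $\perp$ so that the intermediate set produced by normality of $\perp$ genuinely satisfies $A\prec B\prec C$ rather than a weaker inclusion; I would pin this down by carefully writing out what $\perp$-spanning unfolds to under $A\perp U\iff A\prec X\setminus U$ and matching it to $\mathsf{(N4)}$.
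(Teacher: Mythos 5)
Your verification of the orthogonality axioms is correct: symmetry via $\mathsf{(N1)}$, $\emptyset\perp X$ via $\mathsf{(N0')}$, and the union axiom via $\mathsf{(N2)}$ and $\mathsf{(N3')}$ are exactly right, as is your derivation of the $\perp$-spanning property from $\mathsf{(N4)}$. (The paper leaves this proof to the reader, so there is no official argument to compare against.) The genuine gap is in the small-scale step. You claim that $A\prec B$ implies $A\subseteq B$ and that this ``follows from $\mathsf{(N2')}$''; it does not. $\mathsf{(N2')}$ only permits shrinking the left-hand side of a relation that already holds; no combination of $\mathsf{(N0)}$--$\mathsf{(N3)}$ forces $\prec$ to refine inclusion. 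Concretely, the relation in which $A\prec B$ holds for \emph{all} pairs of subsets satisfies $\mathsf{(N0)}$--$\mathsf{(N3)}$ (and even $\mathsf{(N4)}$), yet under the induced relation every subset is orthogonal to itself, so $\perp$ is not small scale. Hence the small-scale assertion is not provable from the axioms as the paper states them: it requires the extra hypothesis that $A\prec B$ implies $A\subseteq B$, which the paper implicitly has in mind (note that the companion proposition, going from $\perp$ to $\prec$, builds the clause $A\subset U$ into its definition). You should assume that hypothesis explicitly; with it, your one-line argument ($A\subseteq X\setminus A$ forces $A=\emptyset$) is fine.

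Two further points on the normality equivalence. First, ``normal'' for orthogonality relations in this paper has three clauses, and you check only the second and third; the first, the Fr\'echet condition $\{x\}\perp\{y\}$ for $x\ne y$, is never addressed and is also not derivable: the operator defined by $A\prec B$ iff $A=\emptyset$ or $B=X$ satisfies $\mathsf{(N0)}$--$\mathsf{(N4)}$, but its induced relation ($A\perp U$ iff $A=\emptyset$ or $U=\emptyset$) fails Fr\'echet. This defect is inherited from the paper's statement, but your write-up should flag it rather than silently skip clause 1. Second, the converse direction that you leave as ``bookkeeping'' does go through, and the worry you raise is resolved by $\mathsf{(N2)}$: given $A\prec C$, i.e.\ $A\perp(X\setminus C)$, normality of $\perp$ yields $X=E\cup F$ with $E\perp(X\setminus C)$ and $F\perp A$; then $A\perp F$ gives $A\prec X\setminus F$, and $X\setminus F\subseteq E$ together with $\mathsf{(N2)}$ gives $A\prec E$, while $E\perp(X\setminus C)$ says $E\prec C$, so $B=E$ is the required interpolant.
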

\begin{proof}
Left to the reader.
\end{proof}

\begin{Definition}\cite{DW} 
Let $X$ be a set and $\prec$ a neighborhood operator. If $A$ is a subset of $X$, then the \textbf{induced neighbourhood operator} $\prec_A$ on subsets of $A$ is defined as follows: $S \prec_A T$ precisely when there exists a subset $T'$ of $X$ such that $S \prec T'$ as subsets of $X$ and $T = T' \cap A$. 
\end{Definition}

\begin{Proposition}\label{PerpContinuousVsNbhdCont}
Suppose $X$ is a set equipped with an orthogonal relation $\perp_X$ and $Y$ is a set equipped with
a small scale orthogonality relation $\perp_Y$.
A function $f:A\subset X\to Y$ is neighborhood continuous (with respect to the induced neighborhood operators) if and only if it is $\perp$-continuous.
\end{Proposition}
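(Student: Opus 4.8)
The plan is to prove the two implications separately, unfolding each notion of continuity into a statement purely about $\perp_X$ and $\perp_Y$ and then matching them through the set-theoretic identity $f^{-1}(Y\setminus T)=A\setminus f^{-1}(T)$, which is valid because $f$ is defined on all of $A$. I would first record the explicit forms of the three induced operators: $S\prec_Y T$ means $S\perp_Y(Y\setminus T)$ together with $S\subseteq T$; $U\prec_X V$ means $U\perp_X(X\setminus V)$ together with $U\subseteq V$; and on the subspace, $S'\prec_A T'$ means there is $T''\subseteq X$ with $S'\prec_X T''$ and $T'=T''\cap A$. Neighborhood continuity then reads: $S\prec_Y T$ implies $f^{-1}(S)\prec_A f^{-1}(T)$.

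For the implication from $\perp$-continuity to neighborhood continuity I would start from $S\prec_Y T$, so $S\perp_Y(Y\setminus T)$ and $S\subseteq T$. Applying $\perp$-continuity to the pair $S, Y\setminus T$ gives $f^{-1}(S)\perp_X f^{-1}(Y\setminus T)$, which by the identity above is $f^{-1}(S)\perp_X(A\setminus f^{-1}(T))$. The witness I would use is $T':=(X\setminus A)\cup f^{-1}(T)$: a short complement computation shows $T'\cap A=f^{-1}(T)$ and $X\setminus T'=A\setminus f^{-1}(T)$, so the orthogonality just obtained is precisely $f^{-1}(S)\perp_X(X\setminus T')$, while $f^{-1}(S)\subseteq f^{-1}(T)\subseteq T'$ supplies the inclusion clause. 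Hence $f^{-1}(S)\prec_X T'$ with $T'\cap A=f^{-1}(T)$, that is, $f^{-1}(S)\prec_A f^{-1}(T)$.

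For the converse I would begin with $C\perp_Y D$ and must manufacture a $\prec_Y$-pair to feed into neighborhood continuity. This is exactly where the assumption that $\perp_Y$ is small scale enters: since a subset of a pair of orthogonal sets is again orthogonal (the monotonicity built into Axiom 2 of an orthogonality relation), $C\cap D$ is self-orthogonal and therefore empty, so $C\subseteq Y\setminus D$ and $C\prec_Y(Y\setminus D)$. Neighborhood continuity then yields $f^{-1}(C)\prec_A(A\setminus f^{-1}(D))$, which unfolds to some $T'\subseteq X$ with $f^{-1}(C)\prec_X T'$ and $T'\cap A=A\setminus f^{-1}(D)$. The last equality forces $f^{-1}(D)=A\setminus T'\subseteq X\setminus T'$, and combining $f^{-1}(C)\perp_X(X\setminus T')$ with monotonicity of $\perp_X$ gives $f^{-1}(C)\perp_X f^{-1}(D)$, which is $\perp$-continuity.

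I expect the only real obstacle to be this single step in the converse, where the small scale hypothesis converts $C\perp_Y D$ into the disjointness $C\cap D=\emptyset$ that makes $C\prec_Y(Y\setminus D)$ legitimate; everything else is bookkeeping with complements and with the two defining clauses of the induced operators. As a sanity check I would verify that the hypothesis is not decorative, noting that a nonempty self-orthogonal set in $Y$ would admit no corresponding $\prec_Y$-pair, so the preimage argument would have nothing to act on.
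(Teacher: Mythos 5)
Your proof is correct and follows essentially the same route as the paper's: both directions unfold the induced operators $\prec_Y$, $\prec_X$, $\prec_A$ and use the identity $f^{-1}(Y\setminus T)=A\setminus f^{-1}(T)$, with your witness $T'=(X\setminus A)\cup f^{-1}(T)$ being exactly the paper's set $S=X\setminus f^{-1}(Y\setminus D)$. If anything, yours is slightly more careful: the paper's step ``$C\perp_Y D$, therefore $C\prec_Y Y\setminus D$'' silently uses $C\cap D=\emptyset$, and you correctly identify that this is precisely where the small-scale hypothesis on $\perp_Y$ is needed.
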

\begin{proof}
Suppose $f:A\subset X\to Y$ is neighborhood continuous and $C\perp_Y D$.
Therefore $C\prec_Y Y\setminus D$ and $f^{-1}(C)\prec_A f^{-1}(Y\setminus D)$.
That means existence of $S\subset X$ such that $S\cap A= f^{-1}(Y\setminus D)$ and $ f^{-1}(C)\prec_X S$.
Consequently,  $f^{-1}(C)\perp_X (X\setminus S)$. Since  $f^{-1}(D)\subset X\setminus S$,
$ f^{-1}(D)\perp_X  f^{-1}(C)$.

Suppose $f:A\subset X\to Y$ is $\perp$-continuous and $C\prec_Y D$.
Hence $C\perp_Y (Y\setminus D)$ and $f^{-1}(C)\perp_X f^{-1}(Y\setminus D)$.
That implies $f^{-1}(C)\prec_X S$, where $S:=X\setminus f^{-1}(Y\setminus D)$.
Since $S\cap A=f^{-1}(D)$, $f$ is neighborhood continuous.
\end{proof}

\begin{Corollary}\label{RealExtensionLemma}
Suppose $X$ is a set equipped with a normal orthogonal relation $\perp_X$
and $[a,b]\subset \mathbb{R}$ is equipped with the topological orthogonality relation $\perp$.
If $f:A\subset X\to [a,b]$ is $\perp$-continuous, then it extends to a $\perp$-continuous
$\bar f:X\to [a,b]$. 
\end{Corollary}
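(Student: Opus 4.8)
The plan is to transport the whole problem into the language of neighborhood operators and then run the classical Urysohn--Tietze program there. Since $[a,b]$ is compact Hausdorff, its topological orthogonality is small scale, so Proposition \ref{PerpContinuousVsNbhdCont} applies: $\perp$-continuity of a map from a subset of $X$ into $[a,b]$ is exactly neighborhood continuity with respect to the induced operators $\prec_A$, $\prec_X$ on the domain and the topological neighborhood operator on $[a,b]$, for which $S\prec T$ means $\overline S\subset\operatorname{int}T$. Post-composing with an affine homeomorphism $[a,b]\to[-1,1]$ (which preserves topological orthogonality in both directions, being a homeomorphism of compacta) we may assume the target is $[-1,1]$. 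Two ingredients are then needed: an Urysohn-type lemma producing $\perp_X$-continuous real functions, and the classical iteration assembling them into an extension.

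First I would prove the Urysohn lemma: if $P\perp_X Q$, i.e. $P\prec_X X\setminus Q$, then there is a $\perp_X$-continuous $u:X\to[0,1]$ with $u\equiv 0$ on $P$ and $u\equiv 1$ on $Q$. Because $\perp_X$ is normal, the induced operator satisfies the interpolation axiom $\mathsf{(N4)}$, which I would apply repeatedly to produce sets $U_d$ indexed by the dyadic rationals $d\in[0,1]$ with $P\subset U_0$, $U_1=X\setminus Q$, and $U_d\prec_X U_{d'}$ whenever $d<d'$. Setting $u(x)=\inf\{d:x\in U_d\}$ (with the convention $u(x)=1$ when $x$ lies in no $U_d$), the verification of neighborhood continuity reduces to the sandwich $u^{-1}([0,s])\subset U_{d_0}\prec_X U_{d_1}\subset u^{-1}([0,t))$, valid for any $s<t$ and dyadic $d_0,d_1$ with $s<d_0<d_1<t$, together with the symmetric statement for the preimages $u^{-1}((t,1])$; these exhaust a basis of neighborhood pairs in $[0,1]$ and each yields the required relation by $\mathsf{(N2)}$, since $A\prec_X B$ forces $A\subset B$ so the chain of $\prec_X$-relations among the $U_d$ transfers to the preimages.

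Next comes the Tietze iteration. Given a neighborhood-continuous $f:A\to[-1,1]$, the level sets $P_1=f^{-1}([-1,-\tfrac13])$ and $Q_1=f^{-1}([\tfrac13,1])$ satisfy $P_1\prec_A A\setminus Q_1$; unwinding the definition of the induced operator $\prec_A$ gives a set $T'\subset X$ with $P_1\prec_X T'$ and $Q_1\subset X\setminus T'$, whence $P_1\perp_X Q_1$ by monotonicity of $\perp_X$. Urysohn then yields $g_1:X\to[-\tfrac13,\tfrac13]$ with $g_1\equiv-\tfrac13$ on $P_1$ and $g_1\equiv\tfrac13$ on $Q_1$, so $|f-g_1|\le\tfrac23$ on $A$. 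Rescaling and repeating produces $\perp_X$-continuous functions $g_n$ with $|g_n|\le\tfrac13(\tfrac23)^{n-1}$ and $|f-\sum_{n\le N}g_n|\le(\tfrac23)^N$ on $A$; the candidate extension is $\bar f=\sum_{n\ge1}g_n$, which is bounded by $1$ and restricts to $f$ on $A$.

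The main obstacle is to confirm that $\bar f$ is genuinely $\perp_X$-continuous, i.e. that $\perp$-continuity survives both the finite sums $h_N:=\sum_{n\le N}g_n$ and the uniform limit $\bar f=\lim_N h_N$; here the compactness of the target is decisive. That finite sums of $\perp_X$-continuous real functions are again $\perp_X$-continuous I would record as a lemma of the same elementary nature (factoring through $\RR^2$ and post-composing with addition, or directly via the calculus of \cite{DW}), which makes each $h_N$ a $\perp_X$-continuous function. For the uniform limit, fix a pair $S\prec T$ in $[-1,1]$: then $\overline S$ and $[-1,1]\setminus\operatorname{int}T$ are disjoint compacta, so there is $\epsilon>0$ with $N_{2\epsilon}(\overline S)\subset\operatorname{int}T$ (the open metric neighborhood). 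Choosing $N$ with $\sup_x|\bar f(x)-h_N(x)|<\epsilon/2$ and inserting the open neighborhoods $S_0:=N_{\epsilon/2}(S)\prec S_1:=N_\epsilon(S)$, one checks $\bar f^{-1}(S)\subset h_N^{-1}(S_0)$ and $h_N^{-1}(S_1)\subset\bar f^{-1}(T)$, while $h_N^{-1}(S_0)\prec_X h_N^{-1}(S_1)$ by neighborhood continuity of $h_N$; the sandwich $\bar f^{-1}(S)\subset h_N^{-1}(S_0)\prec_X h_N^{-1}(S_1)\subset\bar f^{-1}(T)$ then gives $\bar f^{-1}(S)\prec_X\bar f^{-1}(T)$ by $\mathsf{(N2)}$. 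I expect the bookkeeping of this last preservation step, rather than any conceptual difficulty, to be where the real work lies; everything else is the standard normal-space argument recast through Proposition \ref{PerpContinuousVsNbhdCont} and Proposition \ref{MainPropOnNormalRelations}.
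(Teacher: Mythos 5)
Your proof is correct, but it takes a more self-contained route than the paper, whose entire proof consists of your opening move plus a citation: the paper switches to neighborhood continuity via Proposition \ref{PerpContinuousVsNbhdCont} and then invokes Theorem 8.5 of \cite{DW} for the extension. Everything after your first paragraph is therefore content the paper outsources to the literature: the Urysohn function obtained by dyadic interpolation from $\mathsf{(N4)}$ (available because the induced neighborhood operator is normal precisely when $\perp_X$ is), the Tietze iteration, and the check that $\perp$-continuity survives finite sums and uniform limits into a compact target. What your route buys is that it isolates the two ``calculus'' facts the orthogonality framework actually needs: closure under sums (the diagonal trick of Observation \ref{ProductOfPerpFunctions} composed with addition, using that a continuous map between compacta pulls back disjoint-closure pairs to disjoint-closure pairs) and closure under uniform limits (your sandwich $\bar f^{-1}(S)\subset h_N^{-1}(S_0)\prec_X h_N^{-1}(S_1)\subset \bar f^{-1}(T)$, which is indeed where compactness of $[a,b]$ is decisive); what the paper's route buys is brevity. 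Two details to tighten in a write-up: first, justify the claim that initial and final segments ``exhaust a basis of neighborhood pairs'' in $[0,1]$ --- an arbitrary pair $S\prec T$ must be reduced, using compactness of $\overline S$ and the union/intersection axioms $\mathsf{(N3)}$ and $\mathsf{(N3')}$, to finitely many interval pairs before your sandwich applies; second, state the intermediate lemmas (the calculus of sums and the Urysohn input $P_1\perp_X Q_1$) for partial maps $g:A\subset X\to[a,b]$ with orthogonality of preimages tested in the ambient $X$, since that is how Proposition \ref{PerpContinuousVsNbhdCont} is formulated and how your iteration uses it; your setup does accommodate this, but the statements as written drift between maps defined on $A$ and maps defined on $X$.
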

\begin{proof}
In view of \ref{PerpContinuousVsNbhdCont}, it suffices to switch to neighborhood continuity
and that case is done in \cite{DW} (Theorem 8.5).
\end{proof}

\begin{Corollary}\label{ComplexExtensionLemma}
Suppose $X$ is a set equipped with a normal orthogonal relation $\perp_X$
and $\mathbb{C}$ is equipped with the topological orthogonality relation $\perp$.
If $f:A\subset X\to \mathbb{C}$ is $\perp$-continuous with metrically bounded image, then it extends to a $\perp$-continuous
$\bar f:X\to \mathbb{C}$ with metrically bounded image. 
\end{Corollary}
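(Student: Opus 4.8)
The plan is to reduce the complex-valued extension to two applications of Corollary~\ref{RealExtensionLemma} by splitting $f$ into its real and imaginary parts and then reassembling the two real extensions into a single map. Write $f=f_1+if_2$ with $f_1=\mathrm{Re}\,f$ and $f_2=\mathrm{Im}\,f$; since the image of $f$ is metrically bounded it lies in a compact rectangle $K=[a,b]\times[c,d]$. First I would check that $f_1:A\to[a,b]$ and $f_2:A\to[c,d]$ are $\perp$-continuous for the topological orthogonality relations on the intervals. This rests on the fact that a coordinate projection pulls orthogonal pairs back to orthogonal pairs: if $C_1\perp D_1$ in $[a,b]$, then $\overline{C_1}\cap\overline{D_1}=\emptyset$, so $C_1\times\mathbb{R}$ and $D_1\times\mathbb{R}$ have disjoint closures in $\mathbb{C}$ and are orthogonal; as $f_1^{-1}(C_1)=f^{-1}(C_1\times\mathbb{R})$ and likewise for $D_1$, $\perp$-continuity of $f$ forces $f_1^{-1}(C_1)\perp_X f_1^{-1}(D_1)$, and symmetrically for $f_2$. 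Corollary~\ref{RealExtensionLemma} then supplies $\perp$-continuous extensions $\bar f_1:X\to[a,b]$ and $\bar f_2:X\to[c,d]$, and I set $\bar f=\bar f_1+i\bar f_2$. By construction $\bar f$ extends $f$ and its image lies in $K$, so it is metrically bounded.

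The remaining, and genuinely substantive, step is to verify that the reassembled map $\bar f$ is itself $\perp$-continuous; coordinatewise $\perp$-continuity does not obviously combine, because orthogonality in $\mathbb{C}=\mathbb{R}^2$ is finer than the two coordinatewise orthogonalities. Here I would exploit the bounded image through a grid argument. Fix $C\perp D$ in $\mathbb{C}$. Since preimages only see points of the image, I may replace $C,D$ by $C'=C\cap K$ and $D'=D\cap K$, whose closures are disjoint compact subsets of $K$ and hence satisfy $\mathrm{dist}(\overline{C'},\overline{D'})=2\delta$ for some $\delta>0$. I then partition $K$ into finitely many closed subrectangles $I\times J$ of diameter at most $\delta/2$. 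With this choice, any subrectangle meeting $\overline{C'}$ and any subrectangle meeting $\overline{D'}$ are at distance at least $\delta>0$, by the triangle inequality.

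For each pair consisting of a subrectangle $S_C=I_C\times J_C$ meeting $\overline{C'}$ and a subrectangle $S_D=I_D\times J_D$ meeting $\overline{D'}$, positivity of $\mathrm{dist}(S_C,S_D)$ forces the projections to separate in at least one coordinate: either $\overline{I_C}\cap\overline{I_D}=\emptyset$ or $\overline{J_C}\cap\overline{J_D}=\emptyset$, since otherwise a common abscissa together with a common ordinate would produce a common point. In the first case $I_C\perp I_D$, so $\perp$-continuity of $\bar f_1$ gives $\bar f_1^{-1}(I_C)\perp_X\bar f_1^{-1}(I_D)$; because $\bar f^{-1}(S_C)=\bar f_1^{-1}(I_C)\cap\bar f_2^{-1}(J_C)\subseteq\bar f_1^{-1}(I_C)$ and likewise for $S_D$, orthogonality passes to these subsets by monotonicity of $\perp$ under shrinking either argument (a consequence of Axiom~2 and symmetry); the second case is symmetric using $\bar f_2$. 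Since $\overline{C'}$ and $\overline{D'}$ are covered by the finitely many subrectangles meeting them, Axiom~2 (finite additivity of $\perp$) assembles these pairwise orthogonalities into $\bar f^{-1}(C')\perp_X\bar f^{-1}(D')$, that is $\bar f^{-1}(C)\perp_X\bar f^{-1}(D)$, as required.

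The hard part is precisely this last reassembly: it is where the hypothesis of a metrically bounded image is indispensable, since compactness of $K$ is what both forces the positive separation $2\delta$ and keeps the grid finite, allowing the single-coordinate separations to be combined by Axiom~2. I would expect the only delicate points to be the choice of mesh relative to $\delta$ and the separating-axis observation for axis-parallel rectangles, both of which are elementary once the compact setting is in place.
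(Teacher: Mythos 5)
Your proof is correct and follows essentially the same route as the paper: split $f$ into real and imaginary parts, extend each via Corollary~\ref{RealExtensionLemma}, and recover joint $\perp$-continuity of the reassembled map from coordinatewise $\perp$-continuity by covering the compact image with a finite grid of small rectangles, separating any relevant pair of rectangles in at least one coordinate, and combining the resulting pairwise orthogonalities with Axiom~2. The only difference is presentational: the paper packages the grid argument as an equivalence ($g\Delta h$ is $\perp$-continuous if and only if $g$ and $h$ are) before invoking the extension, and leaves the separating-axis observation implicit, whereas you extend first and spell that step out.
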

\begin{proof}
To apply \ref{RealExtensionLemma} it suffices to show that $g,h:A\to [a,b]$
are $\perp$-continuous if and only $g\Delta h:A\to [a,b]\times [a,b]$, $(g\Delta h)(x):=(g(x),h(x))$,
is $\perp$-continuous.

In one direction it is obvious, so assume $C, D\subset [a,b]\times [a,b]$ are metrically separated.
That means there is $\epsilon > 0$ such that $|z_1-z_2|\ge \epsilon$ if $z_1\in C$ and $z_2\in D$.
Cover $[a,b]\times [a,b]$
by finitely many sets of the form $B_1\times B_2$, where $B_1$ and $B_2$ are intervals of length $\epsilon/4$. Notice $(g\Delta h)^{-1}(C\cap (B_1\times B_2))\perp (g\Delta h)^{-1}(D\cap (B'_1\times B'_2))$ for any choice of $B_1, B_2, B_1', B_2'$. Therefore, $(g\Delta h)^{-1}(C\cap (B_1\times B_2))\perp (g\Delta h)^{-1}(D)$ for any choice of $B_1, B_2$.
Finally, $(g\Delta h)^{-1}(C)\perp (g\Delta h)^{-1}(D)$.
\end{proof}

\begin{Observation}\label{ProductOfPerpFunctions}
Observe that the proof of \ref{ComplexExtensionLemma} can be used to prove that, given two functions $f,g:X\to [0,1]$ from a set equipped with orthogonality relation $\perp$, the function $h:X\to [0,1]\times [0,1]$ is $\perp$-continuous if and only if both $f$ and $g$ are $\perp$-continuous.

\end{Observation}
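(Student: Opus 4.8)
The plan is to read $h$ as the diagonal map $f\Delta g$, $h(x)=(f(x),g(x))$, and to equip both $[0,1]$ and $[0,1]\times[0,1]$ with the topological orthogonality relation, which on these compact metric spaces coincides with the metric one (disjointness of closures is equivalent to positive distance). I would prove the two implications separately. That $\perp$-continuity of $h$ forces $\perp$-continuity of each coordinate is the routine direction: given $A\perp C$ in $[0,1]$, the cylinders $A\times[0,1]$ and $C\times[0,1]$ are separated by the same gap, hence $A\times[0,1]\perp C\times[0,1]$ in $[0,1]\times[0,1]$; since $h^{-1}(A\times[0,1])=f^{-1}(A)$ and $h^{-1}(C\times[0,1])=f^{-1}(C)$, $\perp$-continuity of $h$ gives $f^{-1}(A)\perp_X f^{-1}(C)$, and the cylinders $[0,1]\times A$, $[0,1]\times C$ do the same for $g$.

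For the converse --- the content of the statement --- I would transplant the box-covering argument from the proof of \ref{ComplexExtensionLemma}. Fix $C,D\subset[0,1]\times[0,1]$ with $C\perp D$, witnessed by some $\epsilon>0$ with $|z_1-z_2|\ge\epsilon$ for all $z_1\in C$, $z_2\in D$, and cover $[0,1]\times[0,1]$ by finitely many boxes $B_1\times B_2$ with $B_1,B_2$ intervals of length $\epsilon/4$. The heart of the proof is the claim that $h^{-1}(C\cap(B_1\times B_2))\perp_X h^{-1}(D\cap(B_1'\times B_2'))$ for every pair of boxes. If either intersection is empty this is immediate since $\emptyset\perp_X X$; otherwise, choosing witnesses $c\in C\cap(B_1\times B_2)$ and $d\in D\cap(B_1'\times B_2')$ forces $|c-d|\ge\epsilon$, so the separation survives in at least one coordinate. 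If it survives in the first, then $B_1$ and $B_1'$ are intervals of length $\epsilon/4$ at positive distance, so $B_1\perp B_1'$ in $[0,1]$; $\perp$-continuity of $f$ yields $f^{-1}(B_1)\perp_X f^{-1}(B_1')$, and since $h^{-1}(C\cap(B_1\times B_2))\subset f^{-1}(B_1)$ and $h^{-1}(D\cap(B_1'\times B_2'))\subset f^{-1}(B_1')$, monotonicity of $\perp_X$ (a consequence of Axiom 2) gives the claim. The symmetric case invokes $\perp$-continuity of $g$.

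It remains to assemble the pieces, and here the finiteness of the cover is essential. Using Axiom 2 for orthogonality relations, $A\perp(E\cup E')\iff A\perp E$ and $A\perp E'$, I would first fix a box meeting $C$ and take the union over the finitely many boxes covering $D$ to obtain $h^{-1}(C\cap(B_1\times B_2))\perp_X h^{-1}(D)$, then take the union over the finitely many boxes covering $C$ to conclude $h^{-1}(C)\perp_X h^{-1}(D)$, which is exactly $\perp$-continuity of $h$. The only genuinely delicate point --- and the main obstacle --- is the per-box orthogonality claim: one must track \emph{which} coordinate realises the separation of the two boxes and feed it to the $\perp$-continuity of the corresponding coordinate function. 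Note that, unlike \ref{RealExtensionLemma}, normality of $\perp_X$ plays no role here; $\perp$-continuity of $f$ and $g$ alone suffices, which is precisely why the same computation proves this more general statement.
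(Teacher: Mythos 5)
Your proposal is correct and follows essentially the same route as the paper: the proof of \ref{ComplexExtensionLemma} is exactly the box-covering argument you transplant (cover $[0,1]\times[0,1]$ by $\epsilon/4$-boxes, establish per-box orthogonality of preimages, then assemble via the finite-union axiom), with the easy direction dismissed as obvious. In fact you supply two details the paper glosses over --- the cylinder argument for the easy direction and the case analysis of which coordinate realises the separation between two boxes --- so your write-up is a faithful, slightly more complete version of the paper's own proof.
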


\section{Simple parallelism structures}

In \cite{JDEnds} the concept of a simple coarse space was introduced. Now we can generalize it as follows:

\begin{Definition}
A \textbf{bounded structure} $\mathcal{B}$ on a set $X$ is a family of subsets of $X$ satisfying the following conditions:\\
1. $\{x\}\in \mathcal{B}$ for each $x\in X$,\\
2. $A\in \mathcal{B}$ if there is $C\in \mathcal{B}$ containing $A$,\\
3. $A\cup C\in \mathcal{B}$ if $A,C\in \mathcal{B}$ and $A\cap C\ne \emptyset$.

Elements of $\mathcal{B}$ are called \textbf{bounded subsets} of $X$.
\end{Definition}

\begin{Definition}\label{SimpleEndDef}
Suppose $(X,\mathcal{B})$ is a set $X$ equipped with a bounded structure $\mathcal{B}$.
A \textbf{simple end} in $(X,\mathcal{B})$ is a sequence $\{x_n\}_{n=1}^\infty$ in $X$ with the property that for any bounded set $A$ the set $\{n\in \mathbb{N} \mid x_n\notin A\}$ contains almost all natural numbers.
\end{Definition}

\begin{Definition}
Suppose $(X,\mathcal{B})$ is a set $X$ equipped with a bounded structure $\mathcal{B}$.
A \textbf{simple parallelism} on $(X,\mathcal{B})$ is an equivalence relation $\parallel$
on the set of simple ends of $X$ such that $\{x_n\}_{n\ge 1}\parallel \{y_n\}_{n\ge 1}$
implies $\{x_{a(n)}\}_{n\ge 1}\parallel \{y_{a(n)}\}_{n\ge 1}$ for all functions
$a:\mathbb{N}\to \mathbb{N}$ satisfying $\lim\limits_{n\to\infty}a(n)=\infty$.
\end{Definition}

\subsection{Small scale examples}
\begin{Example}
1. Any topological space $X$ whose bounded structure is empty
induces the simple parallelism defined as $\{x_n\}_{n\ge 1}\parallel \{y_n\}_{n\ge 1}$
if and only if $\{x_n\}_{n\ge 1}$ and $\{y_n\}_{n\ge 1}$ converge to the same point in $X$.\\
2. Any metric space $(X,d)$ whose bounded structure is empty
induces the simple parallelism defined as $\{x_n\}_{n\ge 1}\parallel \{y_n\}_{n\ge 1}$
if and only if $\lim\limits_{n\to\infty} d(x_n,y_n)=0$.\\
2. Any uniform space $X$ whose bounded structure is empty
induces the simple parallelism defined as $\{x_n\}_{n\ge 1}\parallel \{y_n\}_{n\ge 1}$
if and only if for any uniform cover $\mathcal{U}$ of $X$ there is $M > 0$
such that for each $n > M$ both $x_n$ and $y_n$ belong to the same element of $\mathcal{U}$.\\
\end{Example}

\subsection{Induced orthogonality relation}
\begin{Proposition}
Suppose $\parallel$ is a simple parallelism relation on a set $X$ equipped with a bornology $\mathcal{B}$.\\
1. $\parallel$ induces the orthogonality relation
$\perp$ defined as follows: $A\perp C$ if there are no simple ends
$\{x_n\}_{n\ge 1}$ in $A$ and $\{y_n\}_{n\ge 1}$ in $C$ that are parallel.\\
2. A subset $B$ of $X$ is $\perp$-bounded if and only if
it contains no simple end.\\
3. $\perp$ is a small scale orthogonality relation if and only if
$\mathcal{B}$ is empty.\\
4. $\perp$ is a large scale orthogonality relation if and only if
$\mathcal{B}$ contains all subsets of $X$ consisting of single point.
\end{Proposition}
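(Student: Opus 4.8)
The plan is to verify the four assertions in turn, treating Part 1 (that $\perp$ is genuinely an orthogonality relation) as the technical core and then deriving Parts 2--4 from a single observation about constant sequences. Throughout I read ``simple end in $A$'' as a simple end of $(X,\mathcal{B})$ all of whose terms lie in $A$.

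For Part 1, symmetry of $\perp$ is inherited from the symmetry of the equivalence relation $\parallel$, and Axiom~1 ($\emptyset\perp X$) holds vacuously since $\emptyset$ contains no sequence, hence no simple end. The substance is Axiom~2, namely $A\perp(C\cup C')\iff A\perp C$ and $A\perp C'$. The forward implication is immediate by contraposition: any parallel pair witnessing $\lnot(A\perp C)$ has its second member lying in $C\subseteq C\cup C'$ and so witnesses $\lnot(A\perp(C\cup C'))$, and symmetrically for $C'$. For the reverse implication I would again argue by contraposition: suppose $\{x_n\}$ in $A$ and $\{y_n\}$ in $C\cup C'$ are parallel simple ends. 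At least one of the index sets $\{n:y_n\in C\}$, $\{n:y_n\in C'\}$ is infinite; enumerating such a set as $a(1)<a(2)<\cdots$ gives a reindexing with $a(k)\to\infty$, so the defining invariance of a simple parallelism yields $\{x_{a(k)}\}\parallel\{y_{a(k)}\}$. Since a subsequence indexed by any $a(k)\to\infty$ of a simple end is again a simple end (it eventually leaves every bounded set because $a(k)\to\infty$), this produces parallel simple ends in $A$ and in $C$ (or in $C'$), contradicting $A\perp C$ (or $A\perp C'$). This reindexing step is where I expect the main obstacle to lie, and it is exactly the point at which the invariance axiom of $\parallel$ under reparametrizations tending to infinity is indispensable.

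For Part 2 I would record both directions directly from the definition. If $B$ contains no simple end, then there is no parallel pair with a member supported in $B$, so $B\perp X$, i.e. $B$ is $\perp$-bounded. Conversely, if $B$ contained a simple end $\{x_n\}$, then reflexivity of $\parallel$ gives $\{x_n\}\parallel\{x_n\}$ with one copy viewed in $B$ and one in $X$, contradicting $B\perp X$.

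For Parts 3 and 4 the key observation is that the only sequence supported in a singleton $\{x\}$ is the constant sequence, and this constant sequence is a simple end precisely when $x$ lies in no bounded set, that is, precisely when $\{x\}\notin\mathcal{B}$ (using downward closure of $\mathcal{B}$). Combined with Part 2 this shows $\{x\}$ is $\perp$-bounded iff $\{x\}\in\mathcal{B}$, which gives Part 4 at once: $\perp$ is large scale iff every singleton is $\perp$-bounded iff $\mathcal{B}$ contains every singleton. For Part 3, Part 2 identifies the self-orthogonal sets with the $\perp$-bounded ones, since $A\perp A$ holds iff $A$ contains no simple end; hence $\perp$ is small scale iff $\emptyset$ is the only set containing no simple end. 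Now any nonempty $B_0\in\mathcal{B}$ can contain no simple end (a simple end in $B_0$ would have to leave the bounded set $B_0$ yet stay inside it), so such a $B_0$ would be a nonempty self-orthogonal set; conversely, when no nonempty set is bounded, every nonempty $A$ contains the constant simple end at any $x\in A$. Thus small scale holds exactly when no nonempty subset of $X$ is bounded, i.e. when $\mathcal{B}$ is empty, completing the argument.
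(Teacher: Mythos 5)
Your proof is correct and follows essentially the same route as the paper: the paper's argument for Part 1 is exactly your contradiction via passing to the infinitely many indices landing in $C$ or $C'$ (with the reindexing invariance of $\parallel$ left implicit), and its treatment of Parts 2--4 rests on the same reflexivity and constant-sequence observations you spell out. You simply make explicit the details (subsequences of simple ends are simple ends, $\{x\}$ is $\perp$-bounded iff $\{x\}\in\mathcal{B}$) that the paper dismisses as obvious.
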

\begin{proof}
1. Suppose $A\perp C$, $A\perp C'$ but $A\perp (C\cup C')$ fails.
In that case there are simple ends
$\{x_n\}_{n\ge 1}$ in $A$ and $\{y_n\}_{n\ge 1}$ in $C|cup C'$ that are parallel.
However, infinitely many elements of $\{y_n\}_{n\ge 1}$ are in one of $C$, $C'$, a contradiction.\\
2. If $B$ contains a simple end, then $B\perp B$ fails.\\
3. Is obvious.\\
4. Is obvious.
\end{proof}

\begin{Observation}
A set $D$ is closed in the topology induced by $\perp$ (in the case above) if and only if
for every sequence $\{x_n\}_{n\ge 1}$ in $D$ parallel to a constant sequence $\{c\}_{n\ge 1}$, $c$ is a point of $D$.
\end{Observation}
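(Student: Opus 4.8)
The plan is to reduce the statement to a direct computation of the canonical basis element $D^\perp$ attached to $D$. First I would record the elementary fact that a constant sequence $\{c\}_{n\ge 1}$ is a simple end exactly when $\{c\}$ is unbounded; this is what makes the quantifier ``parallel to a constant sequence'' nonvacuous only for unbounded singletons, and it automatically folds the large scale case (all singletons bounded, discrete topology, every $D$ closed) into the statement. Introducing the auxiliary ``parallel--limit'' set
$$\widehat D=\{x\in X\mid \text{the constant sequence at }x\text{ is parallel to some simple end contained in }D\},$$
the definition of the induced $\perp$ gives, for a point $x$, the equivalence $\{x\}\perp D\iff x\notin\widehat D$ (in the bounded--singleton case both sides hold). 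Consequently $D^\perp=(X\setminus D)\setminus\widehat D$, and the right--hand condition of the Observation is exactly $\widehat D\subset D$.

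Next I would dispatch the easy implication. Assuming $\widehat D\subset D$, the formula for $D^\perp$ collapses to $D^\perp=X\setminus D$; since $D^\perp$ is a basis element of the induced topology, $X\setminus D$ is open and $D$ is closed. This direction uses nothing beyond the definitions and the computation above, and in particular shows $\overline D\subset D\cup\widehat D$ always.

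For the converse I would argue contrapositively. If the condition fails there is a simple end $\{x_n\}_{n\ge 1}$ in $D$ parallel to the constant sequence at some $c\notin D$, and I must exhibit $c\in\overline D$, i.e. show every basis element $A^\perp$ containing $c$ meets $D$. Fix such an $A^\perp\ni c$, so that $c\perp A$, meaning the constant sequence at $c$ is parallel to no simple end in $A$; if $A^\perp\cap D=\emptyset$ then each $x_n$ lies outside $A^\perp$, i.e. in $A\cup\widehat A$. When infinitely many $x_n$ lie in $A$, passing to that subsequence is legitimate by the reparametrization axiom for $\parallel$ (apply it with a strictly increasing $a$ tending to infinity; the constant sequence at $c$ reparametrizes to itself), yielding a simple end in $A$ parallel to the constant sequence at $c$ and contradicting $c\perp A$.

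The main obstacle is the complementary case, where infinitely many $x_n$ land in $\widehat A$ rather than in $A$ itself: one is then handed, for each such $n$, a simple end in $A$ parallel to the constant sequence at $x_n$, and one would like to diagonalize these into a single simple end in $A$ parallel to the constant sequence at $c$. The simple--parallelism axioms supply only reparametrization invariance, not a limit--of--limits principle, so the diagonalization need not go through in general; it does go through precisely when the operator $D\mapsto D\cup\widehat D$ is idempotent, which covers the small scale examples (in a metric space $\widehat D$ is the ordinary closure and the metric furnishes the diagonal). I would therefore flag the intended reading of ``closed'': under the alternative definition ``$x\perp D$ for all $x\notin D$'' the equivalence is immediate from $\{x\}\perp D\iff x\notin\widehat D$, whereas for the basis topology the clean proof requires, and I would isolate, the idempotency of $D\mapsto D\cup\widehat D$ as the hypothesis that closes the gap.
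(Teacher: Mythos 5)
The paper states this Observation without any proof (it is one of the paper's unproved Observations), so there is nothing to compare your argument against; in effect you are supplying the missing proof, and the parts you prove are correct. Your reduction is exactly right: $\{x\}\perp D\iff x\notin\widehat D$ (with bounded singletons handled automatically, since their constant sequences are not simple ends), hence $D^\perp=(X\setminus D)\setminus\widehat D$, the sequential condition of the Observation is precisely $\widehat D\subset D$, and under that condition $X\setminus D=D^\perp$ is a basic open set, so $D$ is closed. Your treatment of the large scale case and of the easy implication is also correct.

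The obstruction you flag in the converse is genuine, and in fact it is worse than a gap in the argument: for the paper's official ``topology induced by $\perp$'' (the one with basis $\{A^\perp\}$) the converse is \emph{false}. Take $X=\{c\}\cup\{x_n\mid n\in\mathbb{N}\}\cup\{y_{n,k}\mid n,k\in\mathbb{N}\}$ with empty bornology, so every sequence is a simple end. Declare two sequences parallel iff they are equal, or both belong to $\mathcal{C}=\{\text{const.\ }c\}\cup\{\{x_{a(n)}\}_{n\ge 1}\mid a\to\infty\}$, or both belong to some $\mathcal{C}_m=\{\text{const.\ }x_m\}\cup\{\{y_{m,a(k)}\}_{k\ge 1}\mid a\to\infty\}$; this partition is invariant under reparametrizations tending to infinity, so it is a simple parallelism. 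Put $D=\{x_n\mid n\in\mathbb{N}\}$. The sequence $\{x_n\}_{n\ge 1}$ lies in $D$ and is parallel to the constant sequence at $c$, yet $c\notin D$, so the sequential condition fails. Nevertheless $D$ is closed in the basis topology: for $A=\{y_{n,k}\mid n,k\in\mathbb{N}\}$ one checks $A^\perp=\{c\}$ (no member of $\mathcal{C}$ is a sequence in $A$, so $c\perp A$, while $x_m\not\perp A$ because the constant sequence at $x_m$ is parallel to $\{y_{m,k}\}_{k\ge 1}$), and $(X\setminus\{y_{m,j}\})^\perp=\{y_{m,j}\}$ for each $m,j$; hence $X\setminus D$ is a union of basic open sets. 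Here $\widehat A=A\cup D$ and $c\in\widehat{\widehat A}\setminus(A\cup\widehat A)$, so the operator $A\mapsto A\cup\widehat A$ fails idempotency exactly as you predicted. The conclusion is the one you reached: the Observation is correct only under the paper's second notion of closedness ($x\perp D$ for all $x\notin D$), where it follows immediately from your computation $\{x\}\perp D\iff x\notin\widehat D$; the two topologies coincide for regular relations, but relations induced by simple parallelisms need not be regular (the example above is not), so isolating the idempotency/regularity hypothesis — which does hold in the metric examples, where $\widehat D$ is the ordinary closure — is precisely the right repair.
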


\section{Compactifications and orthogonality relations}

\begin{Proposition}\label{RelationsInducedByCompactifications}
Every compactification $\bar X$ of a locally compact Hausdorff space $X$ induces two orthogonal relations on $X$:\\
1. A small scale relation $\perp_{ss}$, where $A\perp_{ss} C$ means closures of $A$ and $C$ in $\bar X$ are disjoint.\\
2. A large scale relation $\perp_{ls}$, where $A\perp_{ls} C$ means closures of $A$ and $C$ in $\bar X$ are disjoint at $\bar X\setminus X$ (i.e. $cl(A)\cap cl(C)\cap (\bar X\setminus X)=\emptyset$).\\
Both relations are normal.
\end{Proposition}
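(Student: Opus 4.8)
The plan is to handle the two relations in parallel, since the only structural difference is that $\perp_{ls}$ measures the traces of closures on the corona $\bar X\setminus X$ rather than the full closures in $\bar X$. For each relation I must check three things: that it is an orthogonality relation, that it has the claimed scale (small for $\perp_{ss}$, large for $\perp_{ls}$), and that it is normal. Two standing facts about the ambient space carry most of the weight. First, a compactification $\bar X$ of a locally compact Hausdorff space is compact Hausdorff, hence normal, so any two disjoint closed subsets of $\bar X$ can be separated by open sets. Second, since $X$ is locally compact and dense in $\bar X$, it is open in $\bar X$; thus the corona $\bar X\setminus X$ is a closed, hence compact, subset of $\bar X$. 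The orthogonality axioms are then immediate for both relations: symmetry and $\emptyset\perp X$ are read off the defining formulas, and since $cl(C\cup C')=cl(C)\cup cl(C')$, intersecting with $cl(A)$ (and, for $\perp_{ls}$, also with $\bar X\setminus X$) and distributing over the union gives $A\perp(C\cup C')$ iff $A\perp C$ and $A\perp C'$. For the scale conditions, $A\perp_{ss}A$ says $cl(A)=cl(A)\cap cl(A)=\emptyset$, i.e. $A=\emptyset$, so $\perp_{ss}$ is small scale; and $\{x\}\cap(\bar X\setminus X)=\emptyset$ for every $x\in X$ shows each singleton is $\perp_{ls}$-bounded, so $\perp_{ls}$ is large scale.

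For normality I would verify the three clauses of the Tietze definition. The Fr\'echet clause is trivial for both relations, since distinct points of the Hausdorff space $\bar X$ are closed, disjoint singletons. Clause 3 looks slightly different in the two cases but is easy in both: for $\perp_{ss}$ a self-orthogonal set is empty, hence bounded; for $\perp_{ls}$ the condition $B\perp_{ls}B$ unwinds to $cl(B)\cap(\bar X\setminus X)=\emptyset$, which is literally the same statement as $B\perp_{ls}X$, i.e. as $B$ being $\perp_{ls}$-bounded. (This incidentally identifies the $\perp_{ls}$-bounded sets with the relatively compact subsets of $X$.) So the real content of normality is clause 2, the parallel--perpendicular span decomposition, and clause 1 of that decomposition is nothing but the hypothesis $C\perp D$.

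I expect the span decomposition to be the main obstacle, and I would dispatch both relations with one device. Suppose $C\perp D$. For $\perp_{ss}$ the disjoint closed sets in play are $cl(C)$ and $cl(D)$; for $\perp_{ls}$ they are the coronal traces $cl(C)\cap(\bar X\setminus X)$ and $cl(D)\cap(\bar X\setminus X)$, which are disjoint exactly because $C\perp_{ls}D$, and which are closed precisely because $X$ is open, so that the corona is compact. In either case normality of $\bar X$ yields an open set $U$ containing the first closed set with $cl(U)$ disjoint from the second. I then set $C'=cl(U)\cap X$ and $D'=(\bar X\setminus U)\cap X$, so that $C'\cup D'=(cl(U)\cup(\bar X\setminus U))\cap X=X$. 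Since $cl(C')\subset cl(U)$ and $cl(D')\subset\bar X\setminus U$ (the complement of $U$ being already closed), the required orthogonalities $C'\perp D$ and $D'\perp C$ follow by intersecting with the appropriate closed set and, in the large scale case, additionally with the corona. The one point demanding care is that the separation must push $cl(U)$, not merely $U$, off the second set; that strengthened separation is exactly what normality of the compact Hausdorff $\bar X$ provides.
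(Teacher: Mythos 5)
Your proposal is correct and complete. The paper's own proof of this proposition is literally ``Left to the reader,'' and your argument supplies exactly the intended details: the orthogonality axioms from $cl(C\cup C')=cl(C)\cup cl(C')$, the scale checks, and the $\perp$-span decomposition obtained by applying normality of the compact Hausdorff space $\bar X$ to the disjoint closed sets $cl(C)$, $cl(D)$ (respectively their coronal traces, which are closed because local compactness of $X$ makes $X$ open and the corona compact), then cutting $\bar X$ into $cl(U)$ and $\bar X\setminus U$ and intersecting with $X$.
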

\begin{proof}
Left to the reader.
\end{proof}

\begin{Observation}
Notice $X$ has its own topological orthogonality relation. However, it is not normal if the topology of $X$ is not normal.
\end{Observation}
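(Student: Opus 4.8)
The plan is to show this is exactly the contrapositive of the equivalence recorded earlier (item 3 of the Example on topological orthogonality), so I would prove directly that for the topological orthogonality relation $A\perp C\iff cl(A)\cap cl(C)=\emptyset$, normality of $\perp$ forces topological normality of $X$. First I would dispose of the two cheap axioms in the definition of a normal orthogonality relation. Since $X$ is Hausdorff it is $T_1$, so $cl(\{x\})=\{x\}$ and $\{x\}\perp\{y\}$ whenever $x\ne y$; hence the Fr\'echet axiom holds automatically. Moreover $B\perp B$ means $cl(B)\cap cl(B)=\emptyset$, i.e. $B=\emptyset$, so the only self-orthogonal set is $\emptyset$, which is vacuously bounded; thus axiom 3 holds automatically as well. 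Consequently the entire content of ``$\perp$ is normal'' is axiom 2: every $\perp$-orthogonal pair $\perp$-spans $X$.

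Next I would translate that spanning axiom into a separation statement. Assume $\perp$ is normal and let $E,D$ be disjoint closed subsets of $X$. Then $cl(E)\cap cl(D)=E\cap D=\emptyset$, so $E\perp D$, and by axiom 2 there is a decomposition $X=C'\cup D'$ with $C'\perp D$ and $D'\perp E$, i.e. $cl(C')\cap D=\emptyset$ and $cl(D')\cap E=\emptyset$. I would then set $U:=X\setminus cl(D')$ and $V:=X\setminus cl(C')$; these are open, and $E\subset U$, $D\subset V$ follow from the two disjointness conditions. It then remains only to check $U\cap V=\emptyset$, which exhibits disjoint open neighborhoods of $E$ and $D$ and hence establishes topological normality of $X$. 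Taking the contrapositive gives the Observation.

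The hard (indeed the only nontrivial) step is the disjointness $U\cap V=\emptyset$. Here I would use that $C'\cup D'=X$ forces $cl(C')\cup cl(D')=X$, so that $U\cap V=X\setminus(cl(C')\cup cl(D'))=\emptyset$; this is precisely where the ``$X=C'\cup D'$'' clause in the definition of $\perp$-span does the real work. For the reverse implication (normal topology $\Rightarrow$ normal $\perp$), which rounds out the equivalence though it is not needed for the Observation itself, I would run the construction backwards: given $C\perp D$ I would apply topological normality to the disjoint closed sets $cl(C), cl(D)$ to obtain open sets $U\supset cl(C)$, $V\supset cl(D)$ with $cl(U)\cap cl(V)=\emptyset$, and then take $C'=X\setminus V$ and $D'=X\setminus U$ as the required spanning decomposition.
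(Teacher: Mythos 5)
Your proposal is correct and takes the route the paper itself intends: the Observation is just the contrapositive of item 3 of the earlier Example (``the topological orthogonality relation is normal if and only if $X$ is topologically normal''), which the paper states without proof, and your argument --- disposing of the Fr\'echet and self-orthogonality axioms, then converting a spanning decomposition $X=C'\cup D'$ into the disjoint open sets $U=X\setminus cl(D')$, $V=X\setminus cl(C')$ --- correctly fills in the standard details, including the key point that $C'\cup D'=X$ forces $cl(C')\cup cl(D')=X$ and hence $U\cap V=\emptyset$. No gaps; the converse you sketch is likewise sound (and in fact plain disjointness of $U$ and $V$ already suffices there, since $C'=X\setminus V$ and $D'=X\setminus U$ are closed).
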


\begin{Proposition}\label{ContinuityForLsTopologicalRelation}
Suppose $\bar X$ is a compactification of a locally compact Hausdorff space $X$. If $\perp$ is the relation defined by $A\perp C$ to mean that $cl(A)\cap cl(C)\cap (\bar X\setminus X)=\emptyset$, where closures are in $\bar X$, then $f:X\to [a,b]$
is $\perp$-continuous if and only if it extends to $\bar f:\bar X\to [a,b]$
that is topologically continuous at each point of $\bar X\setminus X$.
\end{Proposition}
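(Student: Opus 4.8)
The statement is an equivalence, so I would prove the two implications separately. Throughout write $\xi$ for a generic point of the corona $\bar X\setminus X$, recall that $X$ is dense in the compact Hausdorff space $\bar X$, and that $[a,b]$ is compact; let $cl(\cdot)$ denote closures in $\bar X$ and $cl_{[a,b]}(\cdot)$ closures in $[a,b]$. The relation $\perp$ on $X$ is the large scale relation $\perp_{ls}$, which is normal by \ref{RelationsInducedByCompactifications}, and $\perp$-continuity of $f$ means that $cl_{[a,b]}(P)\cap cl_{[a,b]}(C)=\emptyset$ implies $cl(f^{-1}(P))\cap cl(f^{-1}(C))\cap(\bar X\setminus X)=\emptyset$.

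For the easy direction, I would assume $\bar f$ extends $f$ and is continuous at every corona point, and take $P,C\subset[a,b]$ with $cl_{[a,b]}(P)\cap cl_{[a,b]}(C)=\emptyset$. If some $\xi$ lay in $cl(f^{-1}(P))\cap cl(f^{-1}(C))$, I would pick a net in $f^{-1}(P)$ converging to $\xi$; applying $\bar f$ and using continuity at $\xi$ gives $\bar f(\xi)\in cl_{[a,b]}(P)$, and symmetrically $\bar f(\xi)\in cl_{[a,b]}(C)$, contradicting disjointness. Hence $f$ is $\perp$-continuous.

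For the hard direction, I would build the extension explicitly from cluster sets. For each corona point $\xi$ set
$$L(\xi):=\bigcap_{U}cl_{[a,b]}\big(f(U\cap X)\big),$$
the intersection over all open neighbourhoods $U$ of $\xi$ in $\bar X$. Since $X$ is dense, each $U\cap X$ is nonempty, so these are nonempty closed subsets of the compact space $[a,b]$, directed downward as $U$ shrinks; thus $L(\xi)\neq\emptyset$. The key step, and the only place $\perp$-continuity is essential, is that $L(\xi)$ is a \emph{single} point: if it contained $s<t$, I would choose disjoint open intervals $P\ni s$, $C\ni t$ with $cl_{[a,b]}(P)\cap cl_{[a,b]}(C)=\emptyset$; since $s,t\in L(\xi)$, every neighbourhood $U$ of $\xi$ meets both $f^{-1}(P)$ and $f^{-1}(C)$, so $\xi\in cl(f^{-1}(P))\cap cl(f^{-1}(C))\cap(\bar X\setminus X)$, contradicting $\perp$-continuity against $P\perp C$. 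So I may define $\bar f(\xi)$ to be the unique point of $L(\xi)$ and $\bar f|_X:=f$.

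Finally I would check continuity of $\bar f$ at $\xi$ with value $v:=\bar f(\xi)$. Given $\epsilon>0$, the downward-directed family $cl_{[a,b]}(f(U\cap X))$ has intersection $\{v\}$, so a finite-intersection argument in the compact space $[a,b]$ yields a neighbourhood $U$ of $\xi$ with $cl_{[a,b]}(f(U\cap X))\subset(v-\epsilon,v+\epsilon)$. For $x\in U\cap X$ this gives $\bar f(x)=f(x)\in(v-\epsilon,v+\epsilon)$, and for a corona point $\eta\in U$ I may use $U$ itself as a neighbourhood of $\eta$ to obtain $\bar f(\eta)\in cl_{[a,b]}(f(U\cap X))\subset(v-\epsilon,v+\epsilon)$; hence $\bar f(U)\subset(v-\epsilon,v+\epsilon)$. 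The main obstacle is the singleton step: density and compactness handle existence of cluster values and continuity, but ruling out two distinct cluster values at a corona point is precisely the content carried by $\perp$-continuity, and the care lies in selecting the intervals $P,C$ so that their $[a,b]$-closures are disjoint while both preimages accumulate at $\xi$.
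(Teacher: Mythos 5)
Your proposal is correct and follows essentially the same route as the paper: the paper likewise defines $\bar f(z)$ at a corona point as the unique element of the cluster set $\bigcap_U cl(f(U\cap X))$, with your write-up simply supplying the details (nonemptiness by compactness, the singleton property from $\perp$-continuity, and continuity via the finite-intersection argument) that the paper leaves implicit. The only cosmetic difference is in the easy direction, where you use nets while the paper uses a metric $\epsilon$-estimate with a small-diameter neighborhood; both yield the same contradiction.
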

\begin{proof}
Suppose $\bar f:\bar X\to [a,b]$ is topologically continuous at each point of $\bar X\setminus X$ and $A, C\subset [a,b]$ are metrically separated. Choose $\epsilon > 0$
such that $|x-y| > \epsilon$ for all $(x,y)\in A\times C$.
If there is $z\in cl(f^{-1}(A))\cap cl(f^{-1}(C))\cap (\bar X\setminus X)$,
then there is a neighborhood $U$ of $z$ in $\bar X$ such that
$\diam(f(U)) < \epsilon/3$, there is $z_1\in U\cap f^{-1}(A)$,
and there is $z_2\in U\cap f^{-1}(C)$. Hence $|f(z_1)-f(z_2)| < \epsilon$, a contradiction.

Suppose $f:X\to [a,b]$
is $\perp$-continuous. Given $z\in \bar X\setminus X$ notice that
the intersection of all sets $cl(f(U\cap X))$, $U$ a neighborhood of $z$ in $\bar X$
consist exactly of one point. Let that point be the value of $\bar f(z)$. Notice
$\bar f$ is continuous at $z$.
\end{proof}

\begin{Corollary}\label{ContinuityForSsTopologicalRelation}
Suppose $\bar X$ is a compactification of a locally compact Hausdorff space $X$. If $\perp$ is the relation defined by $A\perp C$ to mean that $cl(A)\cap cl(C)=\emptyset$, where closures are in $\bar X$, then $f:X\to [a,b]$
is $\perp$-continuous if and only if it extends to $\bar f:\bar X\to [a,b]$
that is topologically continuous at each point of $\bar X$.
\end{Corollary}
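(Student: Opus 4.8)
The plan is to treat this as the small-scale companion of Proposition \ref{ContinuityForLsTopologicalRelation}, upgrading ``continuity on the corona'' to ``continuity on all of $\bar X$'' by controlling the interior points separately. The background fact I would invoke is that $\bar X$ is compact Hausdorff and that, since $X$ is locally compact and dense in $\bar X$, it is open in $\bar X$; consequently, for an interior point $z\in X$ the set $X$ is itself a neighborhood of $z$ in $\bar X$, so continuity of any extension at $z$ coincides with ordinary topological continuity of $f$ at $z$ inside $X$.

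For the implication that a continuous extension forces $\perp$-continuity, I would reuse the first paragraph of the proof of Proposition \ref{ContinuityForLsTopologicalRelation} essentially verbatim, the only change being that the auxiliary point $z$ now ranges over all of $\bar X$ instead of only over $\bar X\setminus X$. Given metrically separated $A,C\subset[a,b]$ with $|s-t|>\epsilon$ for all $s\in A$, $t\in C$, a point $z\in cl(f^{-1}(A))\cap cl(f^{-1}(C))$ would, by continuity of $\bar f$ at $z$, have a neighborhood $U$ with $\diam(\bar f(U))<\epsilon$; choosing $z_1\in U\cap f^{-1}(A)$ and $z_2\in U\cap f^{-1}(C)$ (both of which lie in $X$, as $f^{-1}(A),f^{-1}(C)\subset X$) gives $|f(z_1)-f(z_2)|<\epsilon$, a contradiction. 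Hence $cl(f^{-1}(A))\cap cl(f^{-1}(C))=\emptyset$, which is exactly $f^{-1}(A)\perp f^{-1}(C)$.

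For the converse, assume $f$ is $\perp$-continuous. I would first observe that this is formally stronger than the corona continuity of Proposition \ref{ContinuityForLsTopologicalRelation}: for metrically separated $A,C$ the conclusion $cl(f^{-1}(A))\cap cl(f^{-1}(C))=\emptyset$ trivially yields $cl(f^{-1}(A))\cap cl(f^{-1}(C))\cap(\bar X\setminus X)=\emptyset$. Therefore Proposition \ref{ContinuityForLsTopologicalRelation} applies and produces an extension $\bar f$ with $\bar f|_X=f$ that is continuous at every corona point $z\in\bar X\setminus X$, its corona values being the unique point of $\bigcap_U cl(f(U\cap X))$. By the openness remark, all that is left is to show that $f$ is topologically continuous on $X$.

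This interior continuity is the step that genuinely separates the small-scale relation from the corona relation, and I expect it to be the main obstacle. I would argue by contradiction: if $f$ were discontinuous at some $x_0\in X$, there is $\epsilon>0$ such that every $X$-neighborhood of $x_0$ contains a point $y$ with $|f(y)-f(x_0)|\ge\epsilon$. Taking $A=\{t\in[a,b]:|t-f(x_0)|\le\epsilon/3\}$ and $C=\{t\in[a,b]:|t-f(x_0)|\ge 2\epsilon/3\}$ gives metrically separated sets with $A\perp C$, and $f(x_0)\in A$ so $x_0\in cl(f^{-1}(A))$. Since every $\bar X$-neighborhood of $x_0$ meets $X$ in an $X$-neighborhood of $x_0$, each such neighborhood contains one of the points $y\in f^{-1}(C)$; thus $x_0\in cl(f^{-1}(C))$ as well, contradicting $f^{-1}(A)\perp f^{-1}(C)$. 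Hence $f$ is continuous on $X$, and together with the corona continuity from Proposition \ref{ContinuityForLsTopologicalRelation} the extension $\bar f$ is continuous at every point of $\bar X$; the two prescriptions for $\bar f$ are consistent because the Proposition's extension already restricts to $f$ on $X$.
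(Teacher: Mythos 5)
Your proposal is correct and follows the same overall route as the paper: deduce the corollary from Proposition \ref{ContinuityForLsTopologicalRelation}. The difference is one of completeness. The paper's proof consists of two sentences: one direction is declared obvious, and for the converse it simply asserts that by Proposition \ref{ContinuityForLsTopologicalRelation} the map extends over $\bar X$ to a continuous function. Read literally, that Proposition only yields an extension $\bar f$ continuous at the points of $\bar X\setminus X$, so the paper leaves continuity at points of $X$ implicit. You identify exactly this gap and close it: you observe that local compactness of $X$ (dense in the Hausdorff space $\bar X$) makes $X$ open in $\bar X$, so continuity of $\bar f$ at a point of $X$ reduces to ordinary topological continuity of $f$ there, and you then show that $\perp$-continuity for the small-scale relation forces this interior continuity, via the separated sets $A=\{t:|t-f(x_0)|\le\epsilon/3\}$ and $C=\{t:|t-f(x_0)|\ge 2\epsilon/3\}$ and the contradiction $x_0\in cl(f^{-1}(A))\cap cl(f^{-1}(C))$. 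You also make the ``obvious'' direction explicit by rerunning the first paragraph of the Proposition's proof with $z$ ranging over all of $\bar X$, which is exactly what is needed. So the skeleton is the same, but your version supplies the one step where the small-scale hypothesis (closures disjoint in all of $\bar X$, not merely at the corona) is genuinely used; the paper's citation alone covers only the corona points.
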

\begin{proof}
One direction is obvious.
Suppose $f:X\to [a,b]$
is $\perp$-continuous. By \ref{ContinuityForLsTopologicalRelation} it extends
over $\bar X$ to a continuous function.
\end{proof}

\section{Compatible orthogonal relations}

\begin{Definition}
Given two orthogonal relations $\perp_1$ and $\perp_2$ on a set $X$,
we define the relation $\perp_1\cap\perp_2$ as follows:
$A(\perp_1\cap\perp_2)C$ if and only if $A\perp_1 C$ and $A\perp_2 C$.
\end{Definition}

The following is obvious.
\begin{Proposition}
Given two orthogonal relations $\perp_1$ and $\perp_2$ on a set $X$,
$\perp_1\cap\perp_2$ is an orthogonal relation.
\end{Proposition}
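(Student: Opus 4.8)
The plan is to verify, one at a time, the three defining conditions of an orthogonality relation given in the subsection on orthogonality relations on sets, namely symmetry together with Axioms 1 and 2, for the relation $\perp_1\cap\perp_2$. The whole argument rests on the hypothesis that $\perp_1$ and $\perp_2$ are themselves orthogonality relations, so each already satisfies these conditions; the work amounts to transporting each condition across the logical conjunction that defines $A(\perp_1\cap\perp_2)C$ as ``$A\perp_1 C$ and $A\perp_2 C$''.

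First I would check symmetry. By definition $A(\perp_1\cap\perp_2)C$ means $A\perp_1 C$ and $A\perp_2 C$, and since each $\perp_i$ is symmetric this is equivalent to $C\perp_1 A$ and $C\perp_2 A$, that is, $C(\perp_1\cap\perp_2)A$. Next, Axiom 1 is immediate: $\emptyset\perp_1 X$ and $\emptyset\perp_2 X$ both hold by assumption, hence $\emptyset(\perp_1\cap\perp_2)X$.

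The only mildly substantive step is Axiom 2, and even there the content is purely propositional. Starting from $A(\perp_1\cap\perp_2)(C\cup C')$, unfold the definition to obtain $A\perp_1(C\cup C')$ and $A\perp_2(C\cup C')$. Applying Axiom 2 for $\perp_1$ and for $\perp_2$ separately rewrites this as the fourfold conjunction of $A\perp_1 C$, $A\perp_1 C'$, $A\perp_2 C$, and $A\perp_2 C'$. Reassociating the conjunction as ``$(A\perp_1 C$ and $A\perp_2 C)$ and $(A\perp_1 C'$ and $A\perp_2 C')$'' yields exactly $A(\perp_1\cap\perp_2)C$ and $A(\perp_1\cap\perp_2)C'$, closing the biconditional.

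I do not anticipate any genuine obstacle, which is presumably why the authors call the statement obvious: there is no interaction between the two relations to control, because the intersection operates pointwise on pairs of subsets and each axiom is preserved under conjunction. The one place to stay alert is that Axiom 2 is a biconditional, so both directions must be recorded; but since every rewriting step above is itself an equivalence, reading the chain in reverse supplies the converse for free.
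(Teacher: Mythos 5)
Your proof is correct and is exactly the routine axiom-check the paper has in mind when it labels the proposition obvious (the paper in fact supplies no written proof at all): symmetry and Axiom 1 pass through the conjunction trivially, and Axiom 2 follows because each rewriting step is itself a biconditional. Nothing is missing.
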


\begin{Definition}\label{CompatibilityDef}
Suppose $X$ is a set with an orthogonal relation $\perp$.
A small scale orthogonal relation $\perp_{ss}$ on $X$ is \textbf{compatible} with $\perp$ if for every $\perp$-continuous function $f:X\to [a,b]$ and every $\epsilon > 0$
there is a $\perp$-bounded subset $B$ of $X$ and a $(\perp\cap\perp_{ss})$-continuous
function $g:X\to [a,b]$ such that $|f(x)-g(x)| < \epsilon$ for all $x\in X\setminus B$.
\end{Definition}

\begin{Lemma}[Pasting Lemma]\label{Pasting Lemma}
Suppose $X$ is a set with an orthogonal relation $\perp$
and $f:X\to [a,b]$ is a function. If $a < c < d < b$ and 
both $f|f^{-1}[a,d]:f^{-1}[a,d]\to [a,d]$ and $f|f^{-1}[c,b]:f^{-1}[c,b]\to [c,b]$
are $\perp$-continuous, then $f$ is $\perp$-continuous provided
$f^{-1}[a,c]\perp f^{-1}[d,b]$.
\end{Lemma}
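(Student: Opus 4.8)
The plan is to verify $\perp$-continuity of $f$ straight from the definition. Recall that $[a,b]$ carries the topological orthogonality relation, so for subsets $A,C\subseteq[a,b]$ we have $A\perp C$ iff $cl(A)\cap cl(C)=\emptyset$ (equivalently, since $[a,b]$ is compact, iff $A$ and $C$ are metrically separated). Thus it suffices to fix such an orthogonal pair $A,C$ and prove $f^{-1}(A)\perp f^{-1}(C)$ in $X$. The idea is to cut $[a,b]$ into the three pieces $[a,c]$, $[c,d]$, $[d,b]$ on which the two hypothesized restrictions and the extra assumption can each be applied, and then to reassemble using the bilinearity of $\perp$.

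Concretely, I would set $A_L=A\cap[a,c]$, $A_M=A\cap[c,d]$, $A_R=A\cap[d,b]$, and likewise $C_L,C_M,C_R$, so that $A=A_L\cup A_M\cup A_R$ and $C=C_L\cup C_M\cup C_R$. Since $f^{-1}$ commutes with unions and orthogonality distributes over unions (Axiom 2 of an orthogonality relation, applied on both arguments via symmetry), the single statement $f^{-1}(A)\perp f^{-1}(C)$ is equivalent to the nine statements $f^{-1}(A_i)\perp f^{-1}(C_j)$ for $i,j\in\{L,M,R\}$. So the whole lemma reduces to checking these nine orthogonalities.

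Next I would sort the nine pairs into three groups. For the four pairs $(L,L),(L,M),(M,L),(M,M)$ both sets lie inside $[a,d]$, and their closures there are still disjoint (the closure of a subset only shrinks under intersection, and $cl(A)\cap cl(C)=\emptyset$); hence $\perp$-continuity of $f|f^{-1}[a,d]$ gives the required orthogonality, using only that orthogonality on the subset $f^{-1}[a,d]$ implies orthogonality in $X$. Symmetrically, the four pairs $(M,M),(M,R),(R,M),(R,R)$ lie inside $[c,b]$ and are handled by $\perp$-continuity of $f|f^{-1}[c,b]$ (note $[d,b]\subseteq[c,b]$ and $[c,d]\subseteq[c,b]$). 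The two remaining extreme pairs $(L,R)$ and $(R,L)$ escape both restrictions; but here $f^{-1}(A_L)\subseteq f^{-1}[a,c]$ and $f^{-1}(C_R)\subseteq f^{-1}[d,b]$, so the hypothesis $f^{-1}[a,c]\perp f^{-1}[d,b]$ together with monotonicity of $\perp$ under passing to subsets yields $f^{-1}(A_L)\perp f^{-1}(C_R)$, and likewise for $(R,L)$. Assembling the nine cases gives $f^{-1}(A)\perp f^{-1}(C)$, as desired.

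The only real content is the bookkeeping that these three groups genuinely exhaust all nine pairs: this is exactly where the overlap $[c,d]$ of the two intervals and the separate role of the end pieces $[a,c]$, $[d,b]$ matter, and it is precisely the corner pair $(L,R)$ that cannot be reached from either restriction and therefore forces the extra hypothesis $f^{-1}[a,c]\perp f^{-1}[d,b]$ into the statement. A minor foundational point to pin down is what $\perp$-continuity of a restriction $f|f^{-1}[a,d]$ means: I only need the direction that the induced orthogonality relation on the subset $f^{-1}[a,d]$ implies $\perp$-orthogonality in $X$, which follows from the definition of the induced neighborhood operator together with monotonicity. Beyond that, no estimate is required apart from the elementary fact that the closure of an intersection is contained in the intersection of closures.
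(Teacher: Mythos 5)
Your proposal is correct, and it follows the same underlying strategy as the paper's proof --- a case analysis driven by the three subintervals $[a,c]$, $[c,d]$, $[d,b]$, with pairs falling inside $[a,d]$ or $[c,b]$ handled by the two restrictions and the extreme pair handled by the hypothesis $f^{-1}[a,c]\perp f^{-1}[d,b]$ --- but your decomposition is technically different. The paper fixes $\delta>0$ with $|x-y|\ge 4\delta$ for $x\in A$, $y\in C$ and $4\delta<\min(c-a,d-c,b-d)$, covers $[a,b]$ by finitely many intervals of length at most $\delta$ each contained in one of the three subintervals, and verifies orthogonality of the preimages of those covering intervals (metric separation guarantees any $A$-interval and any $C$-interval still have disjoint closures). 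You instead partition $A$ and $C$ themselves into $A\cap[a,c]$, $A\cap[c,d]$, $A\cap[d,b]$ (and likewise for $C$) and check the nine pairs directly, using only that closures of subsets inherit disjointness. Your route is slightly more economical: it eliminates the $\delta$-bookkeeping and the covering argument entirely, at the cost of nothing, since the paper's formally stronger conclusion (orthogonality of preimages of interval neighborhoods of the pieces) is never used. You also correctly identify and resolve the one genuine foundational point common to both arguments: $\perp$-continuity of a restriction $f|f^{-1}[a,d]$ must be read so that orthogonality of preimages inside the subset $f^{-1}[a,d]$ yields orthogonality in $X$, which holds because orthogonality in the induced structure on a subset passes to $X$ by monotonicity (Axiom 2), and because $A'\subset[a,d]$ gives $(f|f^{-1}[a,d])^{-1}(A')=f^{-1}(A')$.
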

\begin{proof}
Given two metrically separated subsets $A$ and $C$ of $[a,b]$
find $\delta > 0$ such that $|x-y|\ge 4\delta$ if $x\in A$ and $y\in C$. Also,
$4\delta < \min(c-a,d-c,b-d)$. Cover $[a,b]$ by finitely many intervals of length at most $\delta$, each of them contained in either $[a,c]$, $[c,d]$ or $[d,b]$
Pick among them intervals $A_i$ intersecting $A$. Pick among them intervals $C_j$ intersecting $C$.
It suffices to show that $f^{-1}(A_i)\perp f^{-1}(C_j)$ for all $i,j$.
It is so if $A_i, C_j\subset [a,d]$ or $A_i, C_j\subset [c,b]$.
The same happens if one of them is contained in $[a,c]$ and the other in $[d,b]$.
Those are the only possibilities.
\end{proof}

\begin{Proposition}\label{ACharOfCompatibility}
Suppose $X$ is a set with a normal orthogonal relation $\perp$ and a small scale orthogonal relation $\perp_{ss}$ on $X$ has the property that for every $\perp$-bounded
subset $B$ of $X$ there is a $\perp$-bounded subset $U$ containing $B$
and satisfying $(X\setminus U)\perp_{ss} B$. $\perp_{ss}$ is compatible with $\perp$ if and only if the following two conditions are satisfied:\\
1. $A\perp C$ and $A\cap C=\emptyset$ implies $(A\setminus B)\perp_{ss} (C\setminus B)$ for some $\perp$-bounded subset $B$ of $X$.\\
2. $\perp\cap\perp_{ss}$ is normal as well.
\end{Proposition}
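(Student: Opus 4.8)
The plan is to prove the two implications separately, after a common preliminary that streamlines both. First I would record that $\perp\cap\perp_{ss}$ is automatically a \emph{small scale} relation: if $B(\perp\cap\perp_{ss})B$, then in particular $B\perp_{ss}B$, and since $\perp_{ss}$ is small scale this forces $B=\emptyset$. Hence axiom~3 in the definition of a normal relation (self-orthogonal implies bounded) holds trivially for $\perp\cap\perp_{ss}$, and the genuine content of condition~2 is only the Fréchet and the $\perp$-span properties. I would also isolate the Urysohn tool to be used repeatedly: since $\perp$ is normal, whenever $A\perp C$ and $A\cap C=\emptyset$ the sets $A$ and $C$ $\perp$-span $X$, so combining this with the Tietze-type extension \ref{RealExtensionLemma} produces a $\perp$-continuous $f:X\to[0,1]$ with $f(A)\subset\{0\}$ and $f(C)\subset\{1\}$.

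For the forward implication (compatibility $\Rightarrow$ 1 and 2) I would prove condition~1 first. Given $A\perp C$ with $A\cap C=\emptyset$, take the Urysohn function $f$ above and apply Definition~\ref{CompatibilityDef} with $\epsilon=1/3$ to obtain a $\perp$-bounded $B$ and a $(\perp\cap\perp_{ss})$-continuous $g$ with $|f-g|<1/3$ off $B$; then $g(A\setminus B)\subset[0,1/3)$ and $g(C\setminus B)\subset(2/3,1]$ are metrically separated, so their $g$-preimages are $(\perp\cap\perp_{ss})$-orthogonal, whence $(A\setminus B)\perp_{ss}(C\setminus B)$. For condition~2, the Fréchet property is the case $A=\{x\}$, $C=\{y\}$ of condition~1, modulo absorbing the exceptional bounded set through the standing hypothesis, and the $\perp$-span property is obtained by feeding the same Urysohn function into compatibility and reading off the decomposition $X=g^{-1}[0,1/2]\cup g^{-1}[1/2,1]$, again correcting for $B$ via the enlargement $U\supset B$ with $(X\setminus U)\perp_{ss}B$ guaranteed by the standing hypothesis.

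For the backward implication (1 and 2 $\Rightarrow$ compatibility) I would start from a $\perp$-continuous $f:X\to[a,b]$ and $\epsilon>0$, pick a partition $a=c_0<c_1<\cdots<c_n=b$ of mesh $<\epsilon$, and put $P_i=f^{-1}[a,c_{i-1}]$ and $Q_i=f^{-1}[c_i,b]$. Each pair is $\perp$-orthogonal and disjoint, so condition~1 yields $\perp$-bounded $B_i$ with $(P_i\setminus B_i)\perp_{ss}(Q_i\setminus B_i)$; set $B=\bigcup_i B_i$ and use the standing hypothesis to find a $\perp$-bounded $U\supseteq B$ with $(X\setminus U)\perp_{ss}B$. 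I would then verify that the restriction of $f$ to the subspace $X\setminus U$ is $(\perp\cap\perp_{ss})$-continuous: the $\perp$ part is immediate, while the $\perp_{ss}$ part reduces, through the Pasting Lemma~\ref{Pasting Lemma} applied along the cut levels $c_i$, to the slice separations just obtained. Finally, since condition~2 makes $\perp\cap\perp_{ss}$ normal, the Tietze-type extension \ref{RealExtensionLemma} extends $f|_{X\setminus U}$ to a global $(\perp\cap\perp_{ss})$-continuous $g:X\to[a,b]$; as $g=f$ on $X\setminus U$ and $U$ is $\perp$-bounded, this is exactly the approximation required by Definition~\ref{CompatibilityDef}.

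The main obstacle I anticipate is the verification that $f|_{X\setminus U}$ is $(\perp\cap\perp_{ss})$-continuous, that is, upgrading the finitely many consecutive slice separations into $\perp_{ss}$-separation of the $f$-preimages of \emph{arbitrary} metrically separated target sets. This is precisely a Pasting Lemma computation whose hypothesis $f^{-1}[a,c]\perp f^{-1}[d,b]$ must be checked for the combined relation, and it is the step where the property $(X\setminus U)\perp_{ss}B$ is indispensable, since it is what lets the union $B$ of the exceptional bounded sets be treated as $\perp_{ss}$-detached from the rest of $X$. In the forward direction the mirror difficulty is that compatibility only separates the truncations $A\setminus B$ and $C\setminus B$, so recovering the exact Fréchet and $\perp$-span properties of $\perp\cap\perp_{ss}$ — phrased with the original sets rather than their truncations — again hinges on reincorporating the bounded set through the same enlargement $U$.
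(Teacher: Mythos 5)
Your forward implication (compatibility $\Rightarrow$ conditions 1 and 2) is essentially the paper's own argument: build a Urysohn function from normality of $\perp$ and \ref{RealExtensionLemma}, feed it into Definition \ref{CompatibilityDef}, and read off both the separation of $A\setminus B$ from $C\setminus B$ and the $\perp$-span from preimages of intervals under the approximating $g$; that half is sound. The backward implication, however, has a fatal gap: the central claim that $f|_{X\setminus U}$ is $(\perp\cap\perp_{ss})$-continuous (so that $g$ may be taken \emph{equal} to $f$ off a bounded set) is false in general. Concretely, take $X=[0,\infty)$ with $\perp$ the metric ls-orthogonality, $\perp_{ss}$ the metric small scale orthogonality, and $f(x)=\sin\sqrt{\lfloor x\rfloor}$ with values in $[-1,1]$. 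This $f$ is $\perp$-continuous (it is slowly oscillating, since its jumps at integers decay), and conditions 1, 2 and your standing hypothesis all hold -- this is exactly the Higson setting the proposition is meant to cover -- yet for \emph{every} $\perp$-bounded $U$ the restriction $f|_{X\setminus U}$ fails to be $\perp_{ss}$-continuous: pick a large integer $n$ beyond $U$ with $\sin\sqrt{n}\ne\sin\sqrt{n-1}$; the singletons $C=\{\sin\sqrt{n}\}$ and $D=\{\sin\sqrt{n-1}\}$ are metrically separated, but their preimages contain $[n,n+1)$ and $[n-1,n)$ respectively, which touch at $n$. Your appeal to the Pasting Lemma \ref{Pasting Lemma} cannot repair this, because its hypotheses require the restrictions of $f$ to the slabs cut out by the partition to \emph{already} be $(\perp\cap\perp_{ss})$-continuous; your finitely many slice separations $(P_i\setminus B_i)\perp_{ss}(Q_i\setminus B_i)$ supply only the side condition of that lemma, not the slab continuity (which, in the example, is just as false as the global statement), so the reduction is circular. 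No choice of mesh helps: pairs $C,D$ whose separation is smaller than the mesh are never controlled by the slice data.

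This is precisely why Definition \ref{CompatibilityDef} demands only approximation -- exact agreement off a bounded set is impossible -- and why the paper's proof never attempts to reproduce $f$. Instead, for $f$ with values in $[-3a,3a]$, the paper \emph{constructs} a new function $g$: it prescribes constant values on suitable level sets of $f$ (for instance $f^{-1}(\pm a)\setminus B$ and the extreme sets), uses condition 1 to make the relevant sets $\perp_{ss}$-orthogonal after deleting a $\perp$-bounded set, extends by \ref{RealExtensionLemma} applied to the normal relation $\perp\cap\perp_{ss}$ (here condition 2 enters), and glues the three partial functions with Lemma \ref{Pasting Lemma} -- whose slab-continuity hypothesis now holds \emph{by construction}, since each piece was produced by the extension lemma rather than inherited from $f$. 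The price is that $g$ only satisfies a coarse estimate $|f-g|\le 2a+2\delta$, i.e. a definite contraction relative to the range $[-3a,3a]$, and compatibility then follows by the standard Tietze-type iteration of this contraction. Any correct repair of your argument must make this same structural move: replace interpolation of $f$ by construction of a genuinely different $(\perp\cap\perp_{ss})$-continuous function together with an iteration scheme.
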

\begin{proof}
Assume Conditions 1 and 2 hold.
It suffices to show is that for every $\perp$-continuous function
$f:X\to [-3a,3a]$ and every $\delta > 0$ there is a $(\perp\cap\perp_{ss})$-continuous and $g:X\to [-3a,3a]$ such that $|f-g|\leq 2a+2\delta$ outside of a bounded subset $B$ of $X$.

Assume $\delta < a$.
Since $f^{-1}[-a,a]$, $f^{-1}(-3a)$, $f^{-1}(3a)$
are mutually $\perp$-orthogonal, there is a $\perp$-bounded subset $B$
of $X$ such that removing $B$ from the above sets makes them
$\perp_{ss}$-orthogonal.

Using \ref{RealExtensionLemma} for $(\perp\cap\perp_{ss})$-continuity
create partial $g$ on $f^{-1}[-a,a]\setminus B$ with values in $[-a,a]$
by sending $f^{-1}(-a)\setminus B$ to $-a$, by sending $f^{-1}(a)\setminus B$
to $a$, and then extending. Notice $g$ is $(\perp\cap\perp_{ss})$-continuous.

Create another partial $g$ on $(f^{-1}[-a,-a+\delta]\cup f^{-1}(-3a)\setminus B$
with values in $[-3a,-a+\delta]$
by sending $f^{-1}(-3a)\setminus B$
to $-3a$. 

Create a third partial $g$ on $(f^{-1}[a-\delta,a]\cup f^{-1}(3a)\setminus B$
with values in $[a-\delta,3a]$
by sending $f^{-1}(3a)\setminus B$
to $3a$. 

 Paste the three extensions
using Pasting Lemma \ref{Pasting Lemma}.
Finally, extend from $X\setminus B$ over $X$.

Assume $\perp_{ss}$ is compatible with $\perp$.
Suppose $A\perp C$ and $A\cap C=\emptyset$.
The function $f:A\cup C\to [0,1]$, $f(A)\subset \{0\}$, $f(C)\subset \{1\}$
is $\perp$-continuous, so we may extend it over the whole $X$.
Let $g:X\to [0,1]$ be a $(\perp\cap\perp_{ss})$-continuous function
so that $|f(x)-g(x)| < 0.2$ for all $x\in X\setminus B$, $B$ a $\perp$-bounded subset of $X$. Notice $A\setminus B\subset g^{-1}[0,0.2]$ and $C\setminus B\subset g^{-1}[0.8,2]$
are $\perp_{ss}$-orthogonal.

Suppose $A(\perp\cap\perp_{ss}) C$. Hence $A\cap C=\emptyset$.
As above, there is a $(\perp\cap\perp_{ss})$-continuous function
$g:X\to [0,1]$ and a $\perp$-bounded subset $B$ of $X$
such that $A\setminus B\subset g^{-1}[0,0.2]$ and $C\setminus B\subset g^{-1}[0.8,2]$.
Put $A'=g^{-1}[0,0.6]$ and $C'=g^{-1}[0.4,1]$.
Thus $A'\cup C'=X$, $(A\setminus B)\perp\cap\perp_{ss} C'$,
and $(C\setminus B)\perp\cap\perp_{ss} A'$.
Pick a $\perp$-bounded $U_1$ containing $A\cap B$, $U_1\perp_{ss} C$.
Pick a $\perp$-bounded $U_2$ containing $C\cap B$, $U_2\perp_{ss} A$.
Now, $C\perp_{ss}(A'\cup U_1)$ and, since $U_1$ is $\perp$-bounded,
$C\perp(A'\cup U_1)$. Similarly, $A\perp_{ss}(C'\cup U_2)$ and, since $U_2$ is $\perp$-bounded,
$A\perp(C'\cup U_2)$. Since $A\subset (A'\cup U_1)$ and $C\subset (C'\cup U_2)$,
$\perp\cap\perp_{ss}$ is normal as well.
\end{proof}

\begin{Corollary}\label{TopologicalCompatibilityCriterion}
Suppose $\perp$ is normal orthogonal relation $\perp$ on $X$ such that self-$\perp$-orthogonal subsets of $X$ are $\perp$-bounded. Suppose $\perp_{ss}$ is the topological orthogonality relation induced by a normal topology on a set $X$.
$\perp_{ss}$ is compatible with $\perp$ if the following
conditions are satisfied:\\
1. $A\perp C$ implies $cl(A)\perp cl(C)$, where the closures are with respect to the topology on $X$,\\
2. For any $\perp$-bounded set $B$ there is an open set $U$ containing $B$ that is $\perp$-bounded.
\end{Corollary}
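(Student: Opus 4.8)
The goal is to verify the two hypotheses of Proposition \ref{ACharOfCompatibility} under the stated topological conditions, and then invoke that proposition to conclude compatibility. So the strategy is entirely reductive: I would first confirm that the preliminary hypothesis of \ref{ACharOfCompatibility}---that every $\perp$-bounded $B$ sits inside a $\perp$-bounded $U$ with $(X\setminus U)\perp_{ss} B$---is supplied here, and then establish Conditions 1 and 2 of that proposition from the present Conditions 1 and 2.

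\textbf{First I would handle the preliminary hypothesis of \ref{ACharOfCompatibility}.} Given a $\perp$-bounded set $B$, present Condition 2 gives an open $\perp$-bounded $U\supset B$. Since $U$ is open in the topology and $B\subset U$, the complement $X\setminus U$ is closed and disjoint from $B$ (indeed from $\overline{B}\subset U$ if I arrange $U$ to be an open set containing the closure; because the topology is normal and $B$ is contained in the open $\perp$-bounded set, I can shrink to get $\overline{B}\subset U$). Then $B$ and $X\setminus U$ have disjoint closures in the normal topology, so $B\perp_{ss}(X\setminus U)$ by definition of the topological orthogonality relation. This is exactly the required property.

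\textbf{Next, Condition 1 of \ref{ACharOfCompatibility}.} Suppose $A\perp C$ with $A\cap C=\emptyset$. Present Condition 1 gives $cl(A)\perp cl(C)$. Because $\perp$ is normal and self-$\perp$-orthogonal sets are $\perp$-bounded, the hypothesis $A\cap C=\emptyset$ together with the disjointness lets me separate: the intersection $cl(A)\cap cl(C)$ is self-$\perp$-orthogonal (it is $\perp$-orthogonal to itself since $cl(A)\perp cl(C)$), hence $\perp$-bounded; call it $B$. Then $cl(A)\setminus B$ and $cl(C)\setminus B$ are disjoint closed sets, and in a normal topology disjoint closed sets have disjoint closures, which is precisely $(A\setminus B)\perp_{ss}(C\setminus B)$. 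The main obstacle I anticipate is here: I must be careful that subtracting the $\perp$-bounded $B$ genuinely yields \emph{closed} sets whose topological closures remain disjoint, so I may need to enlarge $B$ slightly (using the preliminary property just established, absorbing $B$ into an open $\perp$-bounded neighborhood) to guarantee the leftover pieces are topologically separated rather than merely set-theoretically disjoint.

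\textbf{Finally, Condition 2 of \ref{ACharOfCompatibility}}, the normality of $\perp\cap\perp_{ss}$. Suppose $A\,(\perp\cap\perp_{ss})\,C$. Then $A\perp_{ss} C$ gives disjoint closures in the normal topology, so normality of the topology produces a continuous $f:X\to[0,1]$ with $f(cl(A))=0$, $f(cl(C))=1$; this $f$ is $\perp_{ss}$-continuous, and using Condition 1 one checks it is also $\perp$-continuous, hence $(\perp\cap\perp_{ss})$-continuous. Then the level sets $A'=f^{-1}[0,1/2]$ and $C'=f^{-1}[1/2,1]$ decompose $X$ and witness the $\perp\cap\perp_{ss}$-span of $A$ and $C$, exactly as in the proof of \ref{MainPropOnNormalRelations} and the functional-normality argument. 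The Fr\'echet property of $\perp\cap\perp_{ss}$ is inherited from $\perp_{ss}$ (a Hausdorff-type topology separates points), and the self-orthogonality clause follows because $B\,(\perp\cap\perp_{ss})\,B$ forces $B\perp B$, which by hypothesis makes $B$ $\perp$-bounded. With the preliminary hypothesis and both Conditions verified, Proposition \ref{ACharOfCompatibility} yields that $\perp_{ss}$ is compatible with $\perp$. \edokaz
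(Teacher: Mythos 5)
Your reduction to Proposition \ref{ACharOfCompatibility} is the same strategy the paper follows, and your treatment of the preliminary hypothesis and of Condition 1 is right in outline, though both justifications need repair. For the preliminary hypothesis, topological normality does not let you ``shrink'' an open set around $B$ so that it captures $cl(B)$ (take $B=U=(0,1)$ in $\mathbb{R}$); the correct route is to apply the Corollary's Condition 1 to $B\perp X$ to conclude that $cl(B)$ is $\perp$-bounded, and then apply Condition 2 to $cl(B)$. For Condition 1 of \ref{ACharOfCompatibility}, the sets $cl(A)\setminus B$ and $cl(C)\setminus B$ are not closed, so ``disjoint closed sets have disjoint closures'' is not the relevant fact; but the fix you anticipate does work: with $B=cl(A)\cap cl(C)$ (self-$\perp$-orthogonal, hence $\perp$-bounded) enlarged via Condition 2 to an \emph{open} $\perp$-bounded $U$, openness gives $cl(A\setminus U)\subset cl(A)\setminus U$ and $cl(C\setminus U)\subset cl(C)\setminus U$, whence $cl(A\setminus U)\cap cl(C\setminus U)\subset (cl(A)\cap cl(C))\setminus U=\emptyset$.

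The genuine gap is in Condition 2 of \ref{ACharOfCompatibility}, normality of $\perp\cap\perp_{ss}$ --- which is precisely the part the paper proves in detail (it leaves the rest to the reader), because it is the crux. You take a Urysohn function $f$ furnished by topological normality and assert that ``using Condition 1 one checks it is also $\perp$-continuous.'' No such check exists: Condition 1 concerns closures of $\perp$-orthogonal sets and gives no control whatsoever over an arbitrary continuous separating function. Concretely, let $X=\mathbb{R}$ with $\perp$ the metric ls-orthogonality (both conditions of the Corollary hold), $A=\{n^2\}_{n\ge 1}$, $C=\{n^2+n\}_{n\ge 1}$; then $A(\perp\cap\perp_{ss})C$, yet Urysohn permits a continuous $f$ with $f(A)\subset\{0\}$ and $f(C\cup\{n^2+1\mid n\ge 2\})\subset\{1\}$, and this $f$ is not $\perp$-continuous because $f^{-1}(0)\supset A$ and $f^{-1}(1)\supset\{n^2+1\mid n\ge 2\}$ are not ls-orthogonal. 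Worse, the existence of a $(\perp\cap\perp_{ss})$-continuous function separating every $(\perp\cap\perp_{ss})$-orthogonal pair is essentially equivalent to normality of $\perp\cap\perp_{ss}$ (via \ref{RealExtensionLemma} in one direction and your level-set argument in the other), so your step assumes the conclusion being proved. The paper avoids this circularity by direct set-level surgery: from $cl(A)\perp cl(C)$ and normality of $\perp$ it takes a $\perp$-spanning decomposition $X=A'\cup C'$ with $cl(A)\subset A'$, $cl(C)\subset C'$; the overlaps $B_1=cl(A)\cap cl(C')$ and $B_2=cl(A')\cap cl(C)$ are self-$\perp$-orthogonal, hence $\perp$-bounded; Condition 2 together with topological normality encloses them in open $\perp$-bounded sets $U_1\supset B_1$, $U_2\supset B_2$ with $cl(U_1)\cap cl(U_2)=\emptyset$, $cl(A)\cap cl(U_2)=\emptyset$, $cl(C)\cap cl(U_1)=\emptyset$; then $A''=U_1\cup cl(A')\setminus U_2$ and $C''=U_2\cup cl(C')\setminus U_1$ satisfy $X=A''\cup C''$ with $A''$ orthogonal to $cl(C)$ and $C''$ orthogonal to $cl(A)$ in both relations simultaneously. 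Some construction of this kind, not a Urysohn function, is what you need to close the argument.
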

\begin{proof}
Suppose $A(\perp\cap\perp_{ss})C$. Therefore $cl(A)\cap cl(C)=\emptyset$
and $cl(A)\perp cl(C)$. Hence, there are subsets $A', C'$ of $X$ so that
$A'\perp C'$, $cl(A)\subset A'$, $C\subset C'$, and $A'\cup C'=X$.

Let $B_1=cl(A)\cap cl(C')$ and $B_2=cl(A')\cap cl(C)$. Both are disjoint $\perp$-bounded as they are self-$\perp$-orthogonal. Pick open $\perp$-bounded sets $U_1$ containing $B_1$ and $U_2$ containing $B_2$ whose closures are disjoint, $cl(A)\cap cl(U_2)=\emptyset$,
$cl(C)\cap cl(U_1)=\emptyset$.
Let $C'':=U_2\cup cl(C')\setminus U_1$ and $A'':= U_1\cup cl(A')\setminus U_2$.
Notice $X=A''\cup C''$, $cl(A)\perp C''$, $cl(C)\perp A''$,
$cl(A)\perp_{ss} C''$, and $cl(C)\perp_{ss} A''$.
That proves $\perp\cap\perp_{ss}$ is normal.
\end{proof}

\section{Parallelism of sets}

Using orthogonality relations one can define parellelism of subsets of $X$.
\begin{Definition}
$A$ is \textbf{parallel} to $C$ if $B\subset A$ and $B\perp C$ implies $B$ is bounded.
\end{Definition}


\begin{Example}
In the topological case of normal spaces it means $A\subset cl(C)$.
\end{Example}


\begin{Example}
In the ls-metric case it means existence or $r > 0$ such that
$$A\subset B(C,r).$$
Thus, two subsets $A, C$ of a metric space are parallel to each other if their Hausdorff distance is finite.
\end{Example}

\begin{Observation}
Two lines on the plane $\mathbb{R}^2$ are Euclidean parallel exactly when their Hausdorff distance is finite.
\end{Observation}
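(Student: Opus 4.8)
The plan is to verify the two implications directly by elementary Euclidean geometry, relying on the preceding Example, which already identifies ls-metric parallelism with finiteness of the Hausdorff distance. Recall that two lines $L_1, L_2\subset \RR^2$ are Euclidean parallel precisely when they coincide or are disjoint, equivalently when they share a common direction vector. So it suffices to prove that $L_1$ and $L_2$ have finite Hausdorff distance if and only if they have the same direction. Throughout I will use that an ambient isometry of the plane preserves both the Hausdorff distance and the Euclidean parallel relation, which lets me reduce each case to a convenient normal form.

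First I would treat the case where $L_1$ and $L_2$ share a direction. After an isometry I may assume both lines are horizontal, say $L_1=\RR\times\{0\}$ and $L_2=\RR\times\{h\}$. Then $\dist(a,L_2)=|h|$ for every $a\in L_1$ and $\dist(c,L_1)=|h|$ for every $c\in L_2$, so both one-sided suprema in the Hausdorff distance equal $|h|$ and hence the Hausdorff distance is exactly $|h|<\infty$. This gives the forward implication, and via the Example it shows that Euclidean-parallel lines are parallel in the sense of the paper.

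For the converse I would argue by contrapositive: suppose $L_1$ and $L_2$ are not Euclidean parallel, so they meet in a single point $p$ at an angle $\theta$ with $\sin\theta\ne 0$. Placing $p$ at the origin by an isometry, take $L_1$ along the $x$-axis and write $L_2=\{(s\cos\theta,s\sin\theta)\mid s\in\RR\}$, whose unit normal is $(-\sin\theta,\cos\theta)$. For the point $(t,0)\in L_1$ one computes $\dist\big((t,0),L_2\big)=|t\sin\theta|$, which tends to infinity as $t\to\infty$. Hence $\sup_{a\in L_1}\dist(a,L_2)=\infty$, so the Hausdorff distance is infinite. Combining the two cases with the Example finishes the proof.

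I do not expect a genuine obstacle here; the only points requiring care are the bookkeeping of the Hausdorff distance as a two-sided supremum and the reduction to normal form by an ambient isometry, both routine. The single formula worth recording explicitly is the perpendicular distance $|t\sin\theta|$ from a point $(t,0)$ to the line through the origin of inclination $\theta$, since it is what forces divergence in the non-parallel case.
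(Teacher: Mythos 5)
Your proof is correct. Note that the paper offers no proof of this Observation at all---it is stated as a self-evident remark following the Example that identifies ls-metric parallelism with finiteness of the Hausdorff distance---so there is no argument of the paper's to compare against; your elementary verification fills in exactly what the author treats as obvious. The structure is the natural one: reduction by an ambient isometry to the two normal forms, with the parallel case giving Hausdorff distance exactly $|h|$ and the crossing case killed by the computation $\dist\big((t,0),L_2\big)=|t\sin\theta|\to\infty$. One point you handle well and that deserves emphasis: the statement is only true under the convention that parallelism means equality of direction vectors (so that a line counts as parallel to itself); under the strict ``disjoint lines'' convention, a line and itself would have Hausdorff distance zero yet fail to be parallel, falsifying one direction. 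Your explicit adoption of the inclusive convention at the outset is the right move.
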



\begin{Definition}
$f,g:X\to Y$ are \textbf{parallel} if for each $A\subset X$, $f(A)$ and $g(A)$ are parallel to each other.
\end{Definition}

\begin{Example}
In the topological case of normal spaces it means $f=g$.
\end{Example}

\begin{Example}
In the ls-metric case it means the existence of $r > 0$ such that
$$d_Y(f(x),g(x)) < r$$
for all $x\in X$.
\end{Example}


\begin{Lemma}\label{ParallelMapsLemma}
Suppose $f:X\to Y$ is $\perp$-continuous and
 $A\subset X$ is parallel to $C\subset X$.
If $D\subset f(A)$ is orthogonal to $f(C)$, then $D=f(B)$ for some bounded subset $B$ of $X$.
\end{Lemma}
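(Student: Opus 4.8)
\dokaz (proposal).
The plan is to exhibit the bounded set $B$ explicitly and then verify its boundedness by invoking the definition of parallelism. The natural candidate is $B:=f^{-1}(D)\cap A$. First I would check the set-theoretic identity $f(B)=D$: since $D\subset f(A)$, every $y\in D$ has a preimage lying in $A$, hence in $f^{-1}(D)\cap A$, so $D\subset f(B)$; the reverse inclusion $f(B)\subset D$ is immediate because $B\subset f^{-1}(D)$. This settles the ``$D=f(B)$'' part of the conclusion with no difficulty.

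It then remains to prove that this $B$ is $\perp_X$-bounded, i.e. that $B\perp_X X$. By the definition of $A$ being parallel to $C$, it suffices to note $B\subset A$ (true by construction) together with $B\perp_X C$; parallelism then forces $B$ to be bounded. So the whole problem reduces to establishing the single relation $B\perp_X C$.

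To obtain $B\perp_X C$ I would combine the $\perp$-continuity of $f$ with the downward closure of orthogonality (if $C'\subset C''$ and $C''\perp E$ then $C'\perp E$, which is immediate from Axiom 2 of an orthogonality relation, exactly as recorded in the Proposition on dot products). Starting from the hypothesis $D\perp_Y f(C)$, $\perp$-continuity yields $f^{-1}(D)\perp_X f^{-1}(f(C))$. Since $C\subset f^{-1}(f(C))$, downward closure gives $f^{-1}(D)\perp_X C$, and since $B\subset f^{-1}(D)$, a second application of downward closure gives $B\perp_X C$. Combined with $B\subset A$ and the parallelism of $A$ and $C$, this shows $B$ is bounded, completing the argument.

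I do not expect a genuine obstacle here: the entire proof is a matter of choosing the correct candidate $B=f^{-1}(D)\cap A$ and then chaining $\perp$-continuity with two applications of the monotonicity of $\perp$. The only point requiring a moment's care is the identity $f\big(f^{-1}(D)\cap A\big)=D$, which rests essentially on the hypothesis $D\subset f(A)$.
\edokaz
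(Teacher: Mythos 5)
Your proposal is correct and follows exactly the paper's own argument: the same candidate $B=A\cap f^{-1}(D)$, the same chain of $\perp$-continuity plus downward closure of $\perp$ to get $B\perp_X C$ via $C\subset f^{-1}(f(C))$, and the same appeal to parallelism to conclude boundedness. You merely spell out the verification of $f(B)=D$ in more detail than the paper does.
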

\begin{proof}
$f^{-1}(D)\perp_X f^{-1}(f(C))$ resulting in $(A\cap f^{-1}(D))\perp_X C$
as $C\subset f^{-1}(f(C))$. Thus, $B=A\cap f^{-1}(D)$ is bounded.
Notice $D=f(B)$.
\end{proof}

\begin{Corollary}\label{PerpContImpliesParallelPreserving}
Suppose $f:X\to Y$ is $\perp$-continuous and preserves bounded sets.
 If $A\subset X$ is parallel to $C\subset X$,
then $f(A)$ is parallel to $f(C)$.
\end{Corollary}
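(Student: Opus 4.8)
The plan is to unwind the definition of parallelism in $Y$ and reduce everything to the preceding Lemma \ref{ParallelMapsLemma} together with the hypothesis that $f$ preserves bounded sets. Recall that $f(A)$ being parallel to $f(C)$ means, by definition, that every subset $D\subset f(A)$ with $D\perp_Y f(C)$ is bounded in $Y$. So the whole task is to verify this single implication.

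First I would fix an arbitrary $D\subset f(A)$ satisfying $D\perp_Y f(C)$. This is exactly the hypothesis configuration of Lemma \ref{ParallelMapsLemma}: the map $f$ is $\perp$-continuous, the set $A$ is parallel to $C$ in $X$, and $D\subset f(A)$ is orthogonal to $f(C)$. Invoking that lemma immediately produces a bounded subset $B$ of $X$ with $D=f(B)$.

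The concluding step is to apply the standing assumption that $f$ preserves bounded sets: since $B$ is bounded in $X$, its image $f(B)$ is bounded in $Y$. As $D=f(B)$, the set $D$ is bounded, which is precisely what the definition of $f(A)\parallel f(C)$ demands. Since $D$ was arbitrary, this establishes the parallelism.

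There is essentially no obstacle to overcome; the substantive work has already been carried out in Lemma \ref{ParallelMapsLemma}. The one point worth flagging is that the lemma by itself does \emph{not} yield boundedness of $D$ in $Y$ — it only exhibits $D$ as the image $f(B)$ of a bounded set $B\subset X$ — so the hypothesis that $f$ preserves bounded sets is exactly the ingredient needed to pass from ``$D$ is the image of a bounded set'' to ``$D$ is bounded,'' and it cannot be dropped.
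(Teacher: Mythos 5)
Your proof is correct and is exactly the argument the paper intends: the corollary is stated without proof precisely because it follows by unwinding the definition of parallelism, applying Lemma \ref{ParallelMapsLemma} to any $D\subset f(A)$ orthogonal to $f(C)$, and then using preservation of bounded sets to conclude $D=f(B)$ is bounded. Your remark that the bounded-set-preservation hypothesis is the indispensable bridge from the lemma's conclusion to boundedness of $D$ in $Y$ is also accurate.
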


\section{Abstract boundary at infinity}

Recall that J.Roe \cite{Roe lectures} (pp.~30--31) defined the Higson corona of a coarse space $X$  as a compact space $\nu X$ satisfying
$$C(\nu X)=\frac{B_h(X)}{B_0(X)}.$$ 
Here $B_h(X)$ is the C$^\ast$-algebra of all bounded slowly oscillating complex-valued functions and $B_0(X)$ is the closed two-sided ideal of functions that 'approach $0$ at infinity', i.e. all $f \in B_h(X)$ such that for every $\epsilon >0$ the set $\{x\in X\mid |f(x)|\ge \epsilon\}$ is bounded.

In this section we generalize the concept of Higson corona to arbitrary sets equipped with an orthogonality relation.

\begin{Definition}
Given an orthogonality relation $\perp$ on subsets of $X$, a function
$f:X\to \mathbb{C}$ \textbf{$\perp$-tends to $0$ at infinity} if for every $\epsilon >0$ the set $\{x\in X\mid |f(x)|\ge \epsilon\}$ is $\perp$-bounded.

Equivalently, $f^{-1}(\mathbb{C}\setminus B(0,\epsilon))$ is $\perp$-bounded
for each $\epsilon > 0$.
\end{Definition}

\begin{Lemma}
Functions that $\perp$-tend to $0$ at infinity are $\perp$-continuous.
\end{Lemma}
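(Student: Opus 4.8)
The plan is to unfold $\perp$-continuity directly. Equip $\mathbb{C}$ with its topological (equivalently, metric) orthogonality relation $\perp_{\mathbb{C}}$; then I must show that whenever $A\perp_{\mathbb{C}} C$ in $\mathbb{C}$, the preimages satisfy $f^{-1}(A)\perp_X f^{-1}(C)$. The engine of the argument is the monotonicity built into the orthogonality axioms, combined with the hypothesis that $f$ $\perp$-tends to $0$ at infinity.

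First I would record the monotonicity fact implicit in Axiom 2 of an orthogonality relation: if $B\perp X$ (that is, $B$ is $\perp$-bounded), then $B\perp D$ for every $D\subset X$. Indeed, writing $X=D\cup(X\setminus D)$ and applying the equivalence $B\perp(D\cup(X\setminus D))\iff B\perp D$ and $B\perp(X\setminus D)$, one reads off $B\perp D$. Thus a $\perp$-bounded set is orthogonal to everything, and any subset of a $\perp$-bounded set is again $\perp$-bounded.

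The geometric core is the observation that two $\perp_{\mathbb{C}}$-orthogonal subsets of $\mathbb{C}$ cannot both accumulate at the origin. Suppose $A\perp_{\mathbb{C}} C$, so their closures are disjoint (equivalently, they are separated by a positive distance). Were $0$ to lie in both $cl(A)$ and $cl(C)$, one could pick $a_n\in A$ and $c_n\in C$ with $a_n,c_n\to 0$, forcing $\dist(A,C)=0$ and contradicting separation. Hence at least one of them, say $C$, is bounded away from the origin: there is $\eta>0$ with $C\subset \mathbb{C}\setminus B(0,\eta)$.

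Finally I would assemble these pieces. Since $f$ $\perp$-tends to $0$ at infinity, the set $f^{-1}(\mathbb{C}\setminus B(0,\eta))$ is $\perp$-bounded, and $f^{-1}(C)$, being a subset of it, is $\perp$-bounded as well. By the monotonicity fact, $f^{-1}(C)\perp_X f^{-1}(A)$, which is exactly what $\perp$-continuity demands; symmetry of $\perp$ disposes of the case where instead $A$ is the set bounded away from $0$. The only step requiring genuine care is this geometric dichotomy — verifying that $\perp_{\mathbb{C}}$-orthogonal sets cannot simultaneously approach the origin, and ensuring that "orthogonal in $\mathbb{C}$" is read with the same relation used throughout; everything else is a direct application of the axioms.
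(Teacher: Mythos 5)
Your proof is correct and follows essentially the same route as the paper's: both arguments observe that two $\perp_{\mathbb{C}}$-orthogonal sets cannot both accumulate at the origin, so one of them lies outside some ball $B(0,\eta)$, whence its preimage is $\perp$-bounded and therefore orthogonal to everything; you merely make explicit the monotonicity consequences of Axiom 2 that the paper leaves implicit. One harmless imprecision: your parenthetical claim that disjoint closures in $\mathbb{C}$ is \emph{equivalent} to positive-distance separation is false for unbounded sets, but your dichotomy at the origin goes through under either reading of the orthogonality on $\mathbb{C}$, so nothing in the argument breaks.
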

\begin{proof}
Suppose $A, D\subset \mathbb{C}$ are metrically separated.
There is $\epsilon > 0$ such that $B(0,\epsilon)$ intersects at most one of the sets
$A$ and $D$. That means point-inverse of one of those sets is $\perp$-bounded
resulting in $f^{-1}(A)\perp f^{-1}(D)$.
\end{proof}

\begin{Definition}
Given an orthogonality relation $\perp$ on subsets of $X$, its \textbf{abstract boundary at infinity} $\partial X$ is the spectrum of the $C^\ast$-algebra $B^{\perp}(X)$ of $\perp$-continuous maps $f:X\to \mathbb{C}$ with bounded (in the metric sense) image modulo its two-sided ideal $B^{\perp}_0(X)$ of functions that $\perp$-tend to $0$ at infinity.
\end{Definition}


\begin{Theorem}
Any $\perp$-continuous function $f:X\to Y$ induces
a continuous function from $\partial X$ to $\partial Y$. If $f$ and $g$ are parallel $\perp$-continuous functions that preserve bounded sets,
then the induced continuous functions from $\partial X$ to $\partial Y$ are equal.
\end{Theorem}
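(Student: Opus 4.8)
The plan is to build the induced map on boundaries at the level of $C^\ast$-algebras and then dualize, since $\partial X$ is defined as the spectrum of $B^{\perp}(X)/B^{\perp}_0(X)$. The Gelfand duality between commutative $C^\ast$-algebras and compact Hausdorff spaces turns a $\ast$-homomorphism of algebras into a continuous map of spectra, so the entire first assertion reduces to producing a $\ast$-homomorphism $B^{\perp}(Y)/B^{\perp}_0(Y)\to B^{\perp}(X)/B^{\perp}_0(X)$ and the second assertion reduces to showing two such homomorphisms coincide when $f$ and $g$ are parallel.

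**First I would** define the pullback $f^\ast\colon B^{\perp}(Y)\to B^{\perp}(X)$ by $f^\ast(\varphi)=\varphi\circ f$. The key point is that this lands in $B^{\perp}(X)$: if $\varphi\colon Y\to\mathbb{C}$ is $\perp$-continuous with bounded image, then $\varphi\circ f$ has the same (bounded) image, and it is $\perp$-continuous because for metrically separated $A,D\subset\mathbb{C}$ we have $\varphi^{-1}(A)\perp_Y\varphi^{-1}(D)$, hence $f^{-1}(\varphi^{-1}(A))\perp_X f^{-1}(\varphi^{-1}(D))$ by $\perp$-continuity of $f$, which is exactly $(\varphi\circ f)^{-1}(A)\perp_X(\varphi\circ f)^{-1}(D)$. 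That $f^\ast$ is a $\ast$-homomorphism is immediate since composition with $f$ respects pointwise algebra operations and conjugation. The next step is to check $f^\ast$ carries $B^{\perp}_0(Y)$ into $B^{\perp}_0(X)$, so that it descends to the quotients. If $\varphi$ $\perp$-tends to $0$ at infinity, then for each $\epsilon>0$ the set $\{y\mid|\varphi(y)|\ge\epsilon\}$ is $\perp_Y$-bounded; its preimage under $f$ is exactly $\{x\mid|\varphi(f(x))|\ge\epsilon\}$, and \textbf{this is where I expect the only real subtlety}: I need $f$ to pull back $\perp$-bounded sets to $\perp$-bounded sets. This follows directly from $\perp$-continuity applied to the orthogonality $Y\perp_Y B$, which gives $f^{-1}(Y)=X\perp_X f^{-1}(B)$, i.e. $f^{-1}(B)$ is $\perp_X$-bounded whenever $B$ is $\perp_Y$-bounded. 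Hence $f^\ast$ descends to a $\ast$-homomorphism of quotient algebras, and Gelfand duality yields the continuous $\partial f\colon\partial X\to\partial Y$.

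**For the second assertion** I would use the hypothesis that $f,g$ preserve bounded sets together with Corollary \ref{PerpContImpliesParallelPreserving}: since $f$ and $g$ are parallel and $\perp$-continuous, for each subset $A$ the images $f(A)$ and $g(A)$ are parallel. The goal is to show $f^\ast$ and $g^\ast$ induce the same map on the quotient algebras, i.e. that $\varphi\circ f-\varphi\circ g\in B^{\perp}_0(X)$ for every $\varphi\in B^{\perp}(Y)$. Fix $\epsilon>0$; I must show $\{x\mid|\varphi(f(x))-\varphi(g(x))|\ge\epsilon\}$ is $\perp_X$-bounded. The mechanism is that a $\perp$-continuous bounded function on $Y$ cannot separate two parallel sets by more than a vanishing amount away from a bounded set: concretely, decompose the target disc of $\varphi$ into finitely many pieces $A_1,\dots,A_k$ of diameter below $\epsilon/2$, and observe that for indices $i\ne j$ with $\dist(A_i,A_j)\ge\epsilon$ the sets $\varphi^{-1}(A_i)$ and $\varphi^{-1}(A_j)$ are $\perp_Y$-orthogonal. \textbf{The main obstacle} is converting parallelism of $f(A)$ and $g(A)$ into boundedness of the bad set: I would apply Lemma \ref{ParallelMapsLemma} to the pieces, noting that $f^{-1}(\varphi^{-1}(A_i))$ and $g^{-1}(\varphi^{-1}(A_j))$ interact through the parallelism, so that each contribution $f^{-1}(\varphi^{-1}(A_i))\cap g^{-1}(\varphi^{-1}(A_j))$ with $A_i,A_j$ far apart is $\perp_X$-bounded. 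Since there are finitely many such pairs and the orthogonality relation closes bounded sets under the relevant finite operations, the union is $\perp_X$-bounded, giving $\varphi\circ f-\varphi\circ g\in B^{\perp}_0(X)$ and hence $\partial f=\partial g$.
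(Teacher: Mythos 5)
Your proposal follows essentially the same route as the paper: part one is the pullback $\ast$-homomorphism descending to the quotient algebras (the paper leaves the bounded-preimage check implicit) followed by Gelfand duality, and part two is the paper's argument in contrapositive form --- cover the values by finitely many small pieces, note that separated pieces have $\perp_Y$-orthogonal $\varphi$-preimages, and use parallelism of $f$ and $g$ to force boundedness of each piece. Two small repairs are needed: with pieces of diameter $<\epsilon/2$, the pairs $(A_i,A_j)$ that can receive $(\varphi(f(x)),\varphi(g(x)))$ with $|\varphi(f(x))-\varphi(g(x))|\ge\epsilon$ satisfy only $\dist(A_i,A_j)>0$, not $\dist(A_i,A_j)\ge\epsilon$, so either shrink the pieces to diameter $\le\epsilon/8$ (as the paper does with balls) or index the union over all pairs containing values at distance $\ge\epsilon$; and the boundedness of each piece $V=f^{-1}(\varphi^{-1}(A_i))\cap g^{-1}(\varphi^{-1}(A_j))$ comes not from Lemma \ref{ParallelMapsLemma} (which concerns one map and two parallel subsets of the domain) but directly from the definitions --- $f(V)\perp_Y g(V)$ together with $f(V)$ parallel to $g(V)$ forces $f(V)$ to be $\perp_Y$-bounded, whence $V\subset f^{-1}(f(V))$ is $\perp_X$-bounded by $\perp$-continuity.
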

\begin{proof}
Given a $\perp_Y$-continuous map $h:Y\to \mathbb{C}$ with bounded image,
$h\circ f:  X\to \mathbb{C}$ is an
$\perp_X$-continuous map with bounded image.
Moreover, if $h$ tends to $0$ at infinity, so does $ h\circ f$.

If $f$ and $g$ are parallel $\perp$-continuous functions that preserve bounded sets and $h:Y\to \mathbb{C}$ is $\perp_Y$-continuous map with a bounded image, then 
we need to show
$h\circ f-h\circ g$ $\perp$-tends to $0$ at infinity.

Notice $h\circ f$ and $h\circ g$ are parallel by \ref{PerpContImpliesParallelPreserving}.
Suppose there is $\epsilon > 0$ such that the set
$$U:=\{x\in X\mid |h\circ f(x)-h\circ g(x)|\ge \epsilon\}$$
is unbounded. 

Cover $\{(z_1,z_2)\in h(Y)\times h(Y)\mid |z_1-z_2|\ge \epsilon\}$ by finitely many 
sets of the form $B_1\times B_2$, where each $B_i$ is an $\epsilon/8$-ball.
Since points $(h(f(x)),h(g(x)))$, $x\in U$ belong to the union of those sets,
an unbounded subset $V$ of $U$ lands in exactly one set $B_1\times B_2$.
Since $B_1\perp B_2$ in $\mathbb{C}$, $f(V)\perp g(V)$ in $Y$. However, those two sets are parallel to each other which means they are bounded
resulting in $V$ being bounded in $X$, a contradiction.
\end{proof}


\subsection{Small Scale Examples}
In the case of small scale there are no bounded subsets of $X$, so we are talking about all $\perp$-continuous maps $f:X\to \mathbb{C}$ with bounded image and the abstract boundary at infinity is simply a certain compactification of $X$.
\begin{Example}
1. In the case of topological orthogonality, $\partial X$ is the \textbf{\v Cech-Stone compactification} of $X$.\\
2. In the case of uniform orthogonality, $\partial X$ is the \textbf{Samuel-Smirnov compactification} of $X$.
\end{Example}

\section{Geometric boundary at infinity}

There are two ways to connect abstract boundary at infinity of a coarse space $X$
to its topology. One is to give sufficient conditions for the natural homomorphism  $\alpha: \frac{B^c_h(X)}{B^c_0(X)}\to  \frac{B_h(X)}{B_0(X)}$ to be an isomorphism, where $B^c_h$ and $B^c_0$ are the subalgebras of continuous functions in $B_h$ and $B_0$ respectively. 

Here are existing results in this direction:\\
1. John Roe \cite{Roe lectures} did it in the case $X$ is a paracompact space that has a uniformly bounded cover consisting of open sets,\\
2. J.Dydak and T.Weighill \cite{DW} did it for $X$ being normal both in the topological and large scale sense. Also, they assumed existence of a uniformly bounded cover of $X$ consisting of open sets.

The other way is to detect locally compact topologies on $X$ such that, when compactifying $X$ to $\gamma(X)$ using slowly oscillating continuous functions, 
the corona $\gamma(X)\setminus X$ is homeomorphic to the Higson corona of $X$.
Results in that direction can be found in \cite{DW}  and \cite{KalHon}.

In this section we generalize the above two approaches for sets with orthogonal relations.

\begin{Lemma}\label{CompatibilityViaCStarAlgebrasLemma}
Suppose $X$ is a set with a normal orthogonal relation $\perp$. A normal small scale orthogonal relation $\perp_{ss}$ on $X$ is compatible with $\perp$ if and only if  the natural homomorphism  $\alpha: \frac{B^{\perp\cap\perp_{ss}}(X)}{B_0^{\perp\cap\perp_{ss}}(X)}\to  \frac{B^{\perp}(X)}{B^{\perp}_0(X)}$ is an isomorphism.
\end{Lemma}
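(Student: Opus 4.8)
The plan is to prove the equivalence by connecting the analytic condition (the map $\alpha$ being an isomorphism) to the approximation condition in Definition \ref{CompatibilityDef} via the dictionary already established in the excerpt. The natural homomorphism $\alpha$ is induced by the inclusion $B^{\perp\cap\perp_{ss}}(X)\hookrightarrow B^{\perp}(X)$, which is legitimate because a $(\perp\cap\perp_{ss})$-continuous function is in particular $\perp$-continuous (since $\perp\cap\perp_{ss}$ refines $\perp$), and because the zero-ideals are compatible: a function $\perp$-tends to zero at infinity exactly when its super-level sets are $\perp$-bounded, and this notion of boundedness is the \emph{same} for $\perp$ and for $\perp\cap\perp_{ss}$ (the bounded sets depend only on orthogonality to $X$, and $B\,(\perp\cap\perp_{ss})\,X$ holds iff $B\perp X$ since any set is $\perp_{ss}$-bounded only when empty in the small scale relation—more precisely boundedness is governed by $\perp$ alone). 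I would record this as a preliminary remark so that $\alpha$ is well-defined and injectivity/surjectivity can be discussed cleanly.

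First I would handle surjectivity of $\alpha$, which is where compatibility does its work. Given a class $[f]\in B^{\perp}(X)/B^{\perp}_0(X)$ represented by a $\perp$-continuous $f:X\to\mathbb{C}$ with metrically bounded image, the goal is to find a $(\perp\cap\perp_{ss})$-continuous $g$ with $[g]=[f]$, i.e. $f-g$ $\perp$-tends to $0$ at infinity. The strategy is to apply the real-valued approximation in Definition \ref{CompatibilityDef} to the real and imaginary parts of $f$ and assemble the result using Observation \ref{ProductOfPerpFunctions} (products of $\perp$-continuous real functions are $\perp$-continuous). Concretely, for each $\epsilon=1/n$ compatibility yields a $\perp$-bounded set $B_n$ and a $(\perp\cap\perp_{ss})$-continuous $g_n$ with $|f-g_n|<1/n$ off $B_n$; to get a single $g$ whose difference with $f$ tends to $0$ at infinity (not merely is uniformly close), I would patch these together, using that $f-g_n$ is supported up to $\epsilon$ outside $B_n$ so that $\{|f-g|\ge\epsilon\}$ is $\perp$-bounded. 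The cleaner route is to observe that compatibility directly gives, for each $\epsilon$, a $(\perp\cap\perp_{ss})$-continuous $\epsilon$-approximation modulo a bounded set, which is exactly the statement that the image of $\alpha$ is dense in the quotient C$^\ast$-algebra; since a $\ast$-homomorphism of C$^\ast$-algebras with dense image is surjective, this settles surjectivity.

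Next I would treat injectivity of $\alpha$. Suppose $g\in B^{\perp\cap\perp_{ss}}(X)$ maps to $0$, meaning $g$ $\perp$-tends to $0$ at infinity. Because boundedness for $\perp$ and for $\perp\cap\perp_{ss}$ coincide (the preliminary remark), $g$ also $(\perp\cap\perp_{ss})$-tends to $0$ at infinity, so $[g]=0$ in the source algebra; this is essentially immediate and is the easy half. For the converse direction of the theorem—deducing compatibility from $\alpha$ being an isomorphism—I would run the surjectivity argument backwards: given any $\perp$-continuous $f$ and $\epsilon>0$, surjectivity of $\alpha$ produces a $(\perp\cap\perp_{ss})$-continuous $g$ with $f-g\in B^{\perp}_0(X)$, hence $\{|f-g|\ge\epsilon\}$ is $\perp$-bounded, which is precisely the approximation demanded by Definition \ref{CompatibilityDef}. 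The normality hypotheses and Corollary \ref{RealExtensionLemma} are needed throughout to guarantee that the C$^\ast$-algebras are rich enough (that $\perp$-continuous bounded functions exist and extend), and Proposition \ref{ACharOfCompatibility} ensures $\perp\cap\perp_{ss}$ is again normal so that the source algebra behaves well.

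I expect the main obstacle to be the surjectivity/density step: translating the \emph{pointwise} $\epsilon$-approximation off a bounded set (Definition \ref{CompatibilityDef}) into the \emph{C$^\ast$-algebraic} statement that $\alpha$ has dense range, and then invoking that a $\ast$-homomorphism with dense range between C$^\ast$-algebras (here, quotients, hence genuine C$^\ast$-algebras) is automatically surjective. One must be careful that the approximation is in the quotient norm, which is the supremum of $|f-g|$ over the complement of bounded sets (the ``norm at infinity''), and verify this matches the quotient C$^\ast$-norm on $B^{\perp}(X)/B^{\perp}_0(X)$; this identification of the quotient norm with the limsup-type seminorm induced by $\perp$-boundedness is the technical heart, and the complex-valued bookkeeping (reducing to real and imaginary parts via Observation \ref{ProductOfPerpFunctions} and Corollary \ref{ComplexExtensionLemma}) is where the routine but essential calculations live.
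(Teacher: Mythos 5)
Your preliminary remark contains a genuine error, and it sits under your ``easy half.'' You claim that $B\,(\perp\cap\perp_{ss})\,X$ holds iff $B\perp X$, i.e.\ that $\perp$ and $\perp\cap\perp_{ss}$ have the same bounded sets. The axioms give the opposite of what you conclude from your own observation: if $B\perp_{ss}X$ then, writing $X=B\cup(X\setminus B)$, we get $B\perp_{ss}B$, and smallness of $\perp_{ss}$ forces $B=\emptyset$; hence the \emph{only} $(\perp\cap\perp_{ss})$-bounded set is the empty set. Read literally, then, $B_0^{\perp\cap\perp_{ss}}(X)=\{0\}$, and $\alpha$ would typically fail to be injective (take $X$ a metric space with metric ls-orthogonality $\perp$ and metric small-scale orthogonality $\perp_{ss}$: a Lipschitz bump function supported on a ball lies in $B^{\perp\cap\perp_{ss}}(X)\cap B_0^{\perp}(X)$, so it is killed in the target but not in the source), making the statement false under your reading. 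The repair is to interpret $B_0^{\perp\cap\perp_{ss}}(X)$ as $B^{\perp\cap\perp_{ss}}(X)\cap B_0^{\perp}(X)$, i.e.\ $(\perp\cap\perp_{ss})$-continuous functions that $\perp$-tend to $0$ at infinity; this is the reading forced by the paper's earlier convention that $B^c_h$ and $B^c_0$ are the subalgebras of \emph{continuous} functions inside $B_h$ and $B_0$. Under that reading injectivity of $\alpha$ is indeed trivial, but for a different reason than the one you give: the kernel of the composite $B^{\perp\cap\perp_{ss}}(X)\to B^{\perp}(X)\to B^{\perp}(X)/B_0^{\perp}(X)$ equals $B^{\perp\cap\perp_{ss}}(X)\cap B_0^{\perp}(X)$ by definition, so the entire content of the lemma is surjectivity.

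With that repair, your main argument is sound, and on the substantive direction it genuinely differs from the paper. For (compatibility $\Rightarrow$ $\alpha$ surjective) the paper merely gestures at a Urysohn/Tietze-type successive-approximation argument, whereas you prove the range of $\alpha$ is dense and invoke the fact that a $*$-homomorphism of C$^*$-algebras has closed range; this trades the iteration for C$^*$-machinery and is perfectly legitimate. Both routes rest on the point you correctly isolate as the technical heart: the quotient norm of $[h]$ in $B^{\perp}(X)/B_0^{\perp}(X)$ equals $\inf_B\sup_{x\notin B}|h(x)|$, the infimum over $\perp$-bounded $B$. This follows from a truncation trick: if $|h|\le\epsilon$ off a $\perp$-bounded set $B$, then $h-r_\epsilon\circ h\in B_0^{\perp}(X)$, where $r_\epsilon$ is the radial retraction of $\mathbb{C}$ onto the closed $\epsilon$-ball (post-composition with a $1$-Lipschitz map preserves $\perp$-continuity, differences of $\perp$-continuous maps are $\perp$-continuous by the argument of \ref{ProductOfPerpFunctions}, and the difference is supported in $B$), whence $\|[h]\|\le\epsilon$; the reverse inequality is immediate. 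Combined with splitting into real and imaginary parts (Definition \ref{CompatibilityDef} only speaks of $[a,b]$-valued maps), compatibility then says exactly that the range of $\alpha$ is dense, and closedness finishes. Two minor points: your appeal to Proposition \ref{ACharOfCompatibility} is both unjustified (its hypotheses include a condition on $\perp_{ss}$-neighborhoods of $\perp$-bounded sets not assumed in the lemma) and unnecessary, since the source is a C$^*$-algebra regardless of whether $\perp\cap\perp_{ss}$ is normal; and in the converse direction, which coincides with the paper's three-line argument, the $g$ produced by surjectivity is complex-valued, so one should replace it by its real part truncated into $[a,b]$ before citing Definition \ref{CompatibilityDef}.
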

\begin{proof}
Assume $\alpha: \frac{B^{\perp\cap\perp_{ss}}(X)}{B_0^{\perp\cap\perp_{ss}}(X)}\to  \frac{B^{\perp}(X)}{B^{\perp}_0(X)}$ is an isomorphism.
Given $f\in B^{\perp}(X)$, there is $g\in B^{\perp\cap\perp_{ss}}(X)$
such that $f-g\in B_0^{\perp}(X)$. Therefore, if $\epsilon > 0$
and $B:=\{x\in X\mid |f(x)-g(x)|\ge \epsilon$ we get that $g$ is a required approximation of $f$ as in Definition \ref{CompatibilityDef}.

The proof in the other direction follows the standard idea of proving the Urysohn Lemma.
\end{proof}

\begin{Corollary}
Suppose $\perp$ is normal orthogonal relation $\perp$ on $X$ such that self-$\perp$-orthogonal subsets of $X$ are $\perp$-bounded. Suppose $\perp_{ss}$ is the topological orthogonality relation induced by a normal topology on a set $X$.
The natural homomorphism  $\alpha: \frac{B^{\perp\cap\perp_{ss}}(X)}{B_0^{\perp\cap\perp_{ss}}(X)}\to  \frac{B^{\perp}(X)}{B^{\perp}_0(X)}$ is an isomorphism if the following
conditions are satisfied:\\
1. $A\perp C$ implies $cl(A)\perp cl(C)$, where the closures are with respect to the topology on $X$,\\
2. For any $\perp$-bounded set $B$ there is an open set $U$ containing $B$ that is $\perp$-bounded.
\end{Corollary}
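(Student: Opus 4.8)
The plan is to obtain this Corollary as a straightforward composition of the two immediately preceding results, Corollary \ref{TopologicalCompatibilityCriterion} and Lemma \ref{CompatibilityViaCStarAlgebrasLemma}. The hypotheses of the present statement are essentially the union of the hypotheses of those two results, so the argument reduces to checking that everything lines up and then chaining the implications.

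First I would apply Corollary \ref{TopologicalCompatibilityCriterion}. Its hypotheses are exactly the standing assumptions here: $\perp$ is a normal orthogonal relation whose self-$\perp$-orthogonal subsets are $\perp$-bounded, and $\perp_{ss}$ is the topological orthogonality relation induced by a normal topology, together with Conditions 1 and 2. Hence Corollary \ref{TopologicalCompatibilityCriterion} yields directly that $\perp_{ss}$ is compatible with $\perp$ in the sense of Definition \ref{CompatibilityDef}.

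Second, I would feed this compatibility into Lemma \ref{CompatibilityViaCStarAlgebrasLemma}, which asserts that for a normal $\perp$ and a normal small scale $\perp_{ss}$, compatibility is equivalent to $\alpha$ being an isomorphism. To invoke it I must verify the two structural properties of $\perp_{ss}$ that are not explicitly restated in the corollary: that $\perp_{ss}$ is a normal orthogonal relation and that it is small scale. The first follows because the topological orthogonality relation is normal precisely when the underlying topology is normal, which is our assumption. The second follows because $A\perp_{ss}A$ means $cl(A)\cap cl(A)=\emptyset$, i.e. $cl(A)=\emptyset$, forcing $A=\emptyset$; thus the empty set is the only self-orthogonal subset, which is exactly the small scale condition. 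With both verified, Lemma \ref{CompatibilityViaCStarAlgebrasLemma} converts the compatibility established in the first step into the desired conclusion that $\alpha$ is an isomorphism.

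Since the two cited results do all the real work, there is no genuine obstacle in the deduction itself; the only point demanding care is the bookkeeping just described, namely confirming that $\perp_{ss}$ qualifies as a normal small scale relation so that Lemma \ref{CompatibilityViaCStarAlgebrasLemma} applies. The substantive content, the approximation argument behind compatibility and the Urysohn-style construction underlying the lemma, is already discharged upstream.
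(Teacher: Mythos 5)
Your proposal is correct and matches the paper's intent exactly: the paper states this Corollary without proof, precisely because it follows by combining Corollary \ref{TopologicalCompatibilityCriterion} (whose hypotheses are verbatim those given here, yielding compatibility) with Lemma \ref{CompatibilityViaCStarAlgebrasLemma}. Your bookkeeping that $\perp_{ss}$ is normal (since the topology is normal) and small scale (since $cl(A)\cap cl(A)=\emptyset$ forces $A=\emptyset$) is exactly the verification needed to invoke the Lemma.
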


\begin{Theorem}\label{CoronaIsBoundaryAtInfinity}
Suppose $\bar X$ is a compactification of a locally compact Hausdorff space $X$,
$\perp_{ss}$ is the orthogonal relation on $X$ defined by $A\perp_{ss} C$ if and only if closures of $A$ and $C$ in $\bar X$ are disjoint,
and $\perp$ is an orthogonal relation on $X$ whose family of $\perp$-bounded subsets is identical with the family of pre-compact subset of $X$. If the natural homomorphism $\alpha: \frac{B^{\perp\cap\perp_{ss}}(X)}{B_0^{\perp\cap\perp_{ss}}(X)}\to  \frac{B^{\perp}(X)}{B^{\perp}_0(X)}$ is an isomorphism, then 
the corona $\bar X\setminus X$ is homeomorphic to the abstract boundary at infinity of $(X,\perp)$. 
\end{Theorem}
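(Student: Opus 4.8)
The plan is to argue by Gelfand duality. Both $\bar X\setminus X$ and the abstract boundary $\partial X=\mathrm{Spec}\bigl(B^{\perp}(X)/B^{\perp}_0(X)\bigr)$ are compact Hausdorff spaces, so it suffices to produce an isomorphism of commutative $C^\ast$-algebras between $C(\bar X\setminus X)$ and $B^{\perp}(X)/B^{\perp}_0(X)$ and then pass to spectra. The hypothesis already supplies the isomorphism $\alpha$ between $B^{\perp}(X)/B^{\perp}_0(X)$ and $B^{\perp\cap\perp_{ss}}(X)/B^{\perp\cap\perp_{ss}}_0(X)$, so the entire problem reduces to identifying the latter quotient with $C(\bar X\setminus X)$.

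First I would make the two algebras attached to $\perp\cap\perp_{ss}$ concrete. A bounded $f\colon X\to\mathbb C$ is $\perp_{ss}$-continuous precisely when it extends to a continuous map on $\bar X$: this is Corollary \ref{ContinuityForSsTopologicalRelation} for real-valued maps, promoted to $\mathbb C$-valued maps with bounded image through Corollary \ref{ComplexExtensionLemma} and Observation \ref{ProductOfPerpFunctions}. Since $(\perp\cap\perp_{ss})$-continuity of $f$ is the conjunction of $\perp$-continuity and $\perp_{ss}$-continuity, this identifies $B^{\perp\cap\perp_{ss}}(X)$ with the sub-$C^\ast$-algebra $\mathcal A:=\{f\in C(\bar X)\mid f|_X\text{ is }\perp\text{-continuous}\}$ of $C(\bar X)$, which contains the constants. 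Next I would use the hypothesis that the $\perp$-bounded sets are exactly the precompact subsets of $X$: for $f\in\mathcal A$, every super-level set $\{|f|\ge\epsilon\}$ is precompact for all $\epsilon>0$ if and only if the continuous extension $\bar f$ vanishes on the corona $\bar X\setminus X$. Combined with the Lemma that functions $\perp$-tending to $0$ at infinity are automatically $\perp$-continuous, this shows $C_0(X)\subseteq\mathcal A$ and $B^{\perp\cap\perp_{ss}}_0(X)=C_0(X)$, where $C_0(X)$ is the ideal of functions in $C(\bar X)$ that vanish on $\bar X\setminus X$.

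With these identifications the restriction map $f\mapsto\bar f|_{\bar X\setminus X}$ exhibits $\mathcal A/C_0(X)$ as a closed, conjugation-closed, unital sub-$C^\ast$-algebra of $C(\bar X)/C_0(X)=C(\bar X\setminus X)$. Composing the inclusion $\mathcal A/C_0(X)\hookrightarrow C(\bar X\setminus X)$ with $\alpha^{-1}$ yields an injective unital $\ast$-homomorphism $B^{\perp}(X)/B^{\perp}_0(X)\hookrightarrow C(\bar X\setminus X)$, which on spectra gives a continuous \emph{surjection} $p\colon\bar X\setminus X\twoheadrightarrow\partial X$; concretely $p$ sends a corona point $y$ to the character $f\mapsto\bar f(y)$ of $\mathcal A/C_0(X)$ transported through $\alpha$. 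This half is formal.

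The main obstacle is to show $p$ is injective, equivalently that $\mathcal A$ separates the points of the corona, equivalently --- by Stone--Weierstrass applied to the unital conjugation-closed algebra $\mathcal A/C_0(X)$ --- that the restriction map is onto $C(\bar X\setminus X)$. To separate distinct corona points $\xi\ne\eta$ I would choose, using compactness and the Hausdorff property of $\bar X$, open sets $U\ni\xi$, $V\ni\eta$ with $cl_{\bar X}(U)\cap cl_{\bar X}(V)=\emptyset$ and set $A=U\cap X$, $C=V\cap X$; density of $X$ gives $\xi\in cl_{\bar X}(A)$, $\eta\in cl_{\bar X}(C)$, and disjointness of the closures gives $A\perp_{ss}C$. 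If one can promote this to $A\,(\perp\cap\perp_{ss})\,C$ and knows that $\perp\cap\perp_{ss}$ is normal, then Proposition \ref{MainPropOnNormalRelations} together with Corollary \ref{RealExtensionLemma} produces a $(\perp\cap\perp_{ss})$-continuous $g\colon X\to[0,1]$ with $g(A)\subset\{0\}$, $g(C)\subset\{1\}$; its extension lies in $\mathcal A$ and satisfies $\bar g(\xi)=0\ne1=\bar g(\eta)$, separating the two points. Normality of $\perp\cap\perp_{ss}$ is exactly what $\alpha$ encodes through Lemma \ref{CompatibilityViaCStarAlgebrasLemma} and the characterization of compatibility in Proposition \ref{ACharOfCompatibility}, so the genuinely delicate step --- and the place where local compactness together with the identification of $\perp$-boundedness with precompactness must be spent --- is manufacturing the $\perp$-orthogonality of the tails $A$ and $C$ of two neighborhoods that already have disjoint closures in $\bar X$. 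I expect this separation/surjectivity step to be the crux; once it is in hand one has $\mathcal A/C_0(X)=C(\bar X\setminus X)$, so $p$ is a continuous bijection of compact Hausdorff spaces, hence a homeomorphism, and the theorem follows.
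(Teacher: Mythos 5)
Your ``formal half'' is correct and is, in substance, the paper's own proof: the paper also passes to a $C^\ast$-algebra map into $C(\bar X\setminus X)$ (sending $f\in B^{\perp}(X)$ to $\bar g|_{\bar X\setminus X}$, where $g$ is a $(\perp\cap\perp_{ss})$-continuous representative of $\alpha^{-1}[f]$ and $\bar g$ its continuous extension over $\bar X$), and your identifications $B^{\perp\cap\perp_{ss}}(X)=\{f\in C(\bar X)\mid f|_X \mbox{ is } \perp\mbox{-continuous}\}$ and $B_0^{\perp\cap\perp_{ss}}(X)=C_0(X)$ are right. But the step you label ``the crux'' --- injectivity of $p$, equivalently that this subalgebra separates points of the corona, equivalently surjectivity of the restriction map onto $C(\bar X\setminus X)$ --- is a genuine gap, not a deferred verification: you never produce the promotion of $A\perp_{ss}C$ to $A\perp C$, and the hypotheses of Lemma \ref{CompatibilityViaCStarAlgebrasLemma} and Proposition \ref{ACharOfCompatibility} that you invoke do not yield it (compatibility constrains $\perp$-orthogonal pairs, whereas here you start from a merely $\perp_{ss}$-orthogonal pair). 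So the proposal is not a proof. For what it is worth, the paper's proof is silent at exactly the same spot: it proves well-definedness of the map and then simply asserts that it ``gives an isomorphism'' with $C(\bar X\setminus X)$, never addressing surjectivity.

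Worse, this step cannot be closed from the stated hypotheses, so no blind attempt could have succeeded. Take $X=\mathbb{R}$, $\bar X=[-\infty,+\infty]$, and let $\perp$ be the large scale relation induced by the one-point compactification: $A\perp C$ iff $A$ or $C$ is metrically bounded. The $\perp$-bounded sets are exactly the precompact sets; $B^{\perp}(X)$ consists of the bounded (not necessarily continuous) functions having a single limit at infinity, so $B^{\perp}(X)/B_0^{\perp}(X)\cong\mathbb{C}$ and $\partial X$ is a single point; $B^{\perp\cap\perp_{ss}}(X)$ consists of the bounded continuous functions whose limits at $+\infty$ and $-\infty$ exist and coincide, so $B^{\perp\cap\perp_{ss}}(X)/B_0^{\perp\cap\perp_{ss}}(X)\cong\mathbb{C}$ as well and $\alpha$ is an isomorphism; yet the corona $\bar X\setminus X=\{-\infty,+\infty\}$ has two points. (Here $B_0^{\perp\cap\perp_{ss}}$ is read, as in your proposal and in Roe's convention quoted by the paper, as $B^{\perp\cap\perp_{ss}}\cap B_0^{\perp}$; under the strictly literal reading the hypothesis of the theorem is never satisfiable, since compactly supported functions would kill injectivity of $\alpha$.) What is missing, and what holds in every intended application (Higson, Gromov, Freudenthal, \v Cech-Stone), is the extra hypothesis that every continuous function on $\bar X$ restricts to a $\perp$-continuous function on $X$ --- equivalently, that $A\perp_{ss}C$ implies $A\perp C$; equivalently, that $\mathcal A=C(\bar X)$. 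Under that hypothesis your argument closes immediately (Tietze makes the restriction $\mathcal A/C_0(X)\to C(\bar X\setminus X)$ surjective, and Stone--Weierstrass is no longer even needed). So your instinct about where the difficulty lives was exactly right; the resolution is not a cleverer separation argument but a strengthening of the theorem's hypotheses.
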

\begin{proof}
If $f:X\to [a,b]$ is $\perp$-continuous, we find $g:X\to [a,b]$
that is $(\perp\cap\perp_{ss})$-continuous and $f-g$ $\perp$-tends to $0$ at infinity.
Extend $g$ to a continuous $\bar g:\bar X\cup X\to [a,b]$.
The restriction of $\bar g$ to $\bar X\setminus X$ does not depend on $g$.
That gives an isomorphism between $C(\bar X\setminus X)$
and $\frac{B^{\perp}(X)}{B^{\perp}_0(X)}$.
\end{proof}

\subsection{Large Scale Examples of boundary at infinity}
The following examples follow from \ref{CoronaIsBoundaryAtInfinity} and \cite{JDEnds}.
\begin{Example}
1. In the case of hyperbolic orthogonality, $\partial X$ is the \textbf{Gromov boundary} of $X$.\\
2. In the case of metric ls-orthogonality, $\partial X$ is the \textbf{Higson corona} of $X$.\\
3. In the case of Freundenthal orthogonality, $\partial X$ is the \textbf{Freundenthal corona} of $X$.\\
4. In the case of ls-orthogonality induced by a compactification $\bar X$ of a normal locally compact space $X$, $\partial X$ is homeomorphic to $\bar X\setminus X$.
\end{Example}


\section{Large scale compactifications}

This section is about a concept that unifies Higson compactifications, Gromov boundary, \v Cech-Stone compactification, Samuel-Smirnov compactification, and Freudenthal compactification.

\subsection{Large scale topology}
\begin{Definition}
A \textbf{large scale topological space} $(X,\mathcal{T},\mathcal{B})$ is a topological space $(X,\mathcal{T})$ in which a bornology $\mathcal{B}$
of open-closed subspaces is selected.
\end{Definition}

\begin{Definition}
A large scale topological space $(X,\mathcal{T},\mathcal{B})$ is \textbf{large scale compact} if and only if, for any family $\{U_s\}_{s\in S}$ of open subsets of $X$, $X=\bigcup\limits_{s\in S}U_s$ implies existence of a finite subset $F$ of $S$ such that
$X\setminus \bigcup\limits_{s\in F}U_s$ belongs to $\mathcal{B}$.
\end{Definition}

\begin{Proposition}\label{NormalityOfLargeScaleCompactSpaces}
1. If $(X,\mathcal{T},\mathcal{B})$ is large scale compact and Hausdorff, then it is regular.\\
2.  If $(X,\mathcal{T},\mathcal{B})$ is large scale compact and regular, then it is normal.
\end{Proposition}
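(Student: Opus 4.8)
The plan is to mimic the classical arguments that a compact Hausdorff space is regular, and that a compact regular space is normal, replacing the extraction of a finite subcover by the large scale compactness condition and then absorbing the resulting bounded remainder. The decisive structural feature I will exploit is that every member of $\mathcal{B}$ is, by the very definition of a large scale topological space, open-closed; this lets me patch the bounded error term into one separating open set and delete it from the other without losing either openness or disjointness.

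For part 1, fix a point $x$ and a closed set $C$ with $x\notin C$. For each $c\in C$, Hausdorffness yields disjoint open sets $U_c\ni x$ and $V_c\ni c$. I apply large scale compactness to the open cover $\{V_c\}_{c\in C}\cup\{X\setminus C\}$, keeping the index of $X\setminus C$ in the finite subfamily $F$; then the remainder $B:=X\setminus\bigcup_{s\in F}U_s$ lies in $\mathcal{B}$ and satisfies $B\subseteq C$, so $x\notin B$. Writing $V'=V_{c_1}\cup\cdots\cup V_{c_n}$ and $U'=U_{c_1}\cap\cdots\cap U_{c_n}$, the usual finite-intersection argument gives $U'\cap V'=\emptyset$. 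Because $B$ is open-closed, $V:=V'\cup B$ is open and contains $C$, while $U:=U'\cap(X\setminus B)$ is open and contains $x$; expanding $U\cap V$ shows it is contained in $U'\cap V'=\emptyset$.

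Part 2 is entirely parallel. Given disjoint closed sets $A$ and $C$, regularity furnishes for each $a\in A$ disjoint open sets $U_a\ni a$ and $W_a\supseteq C$. Applying large scale compactness to $\{U_a\}_{a\in A}\cup\{X\setminus A\}$, again retaining $X\setminus A$, produces an open-closed remainder $B\in\mathcal{B}$ with $B\subseteq A$, so $B\cap C=\emptyset$. With $U'=\bigcup U_{a_i}$ and $W'=\bigcap W_{a_i}$ one has $U'\cap W'=\emptyset$, and then $U:=U'\cup B\supseteq A$ and $W:=W'\cap(X\setminus B)\supseteq C$ are the desired disjoint open sets, by the same computation as in part 1.

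The only genuine obstacle is the bounded remainder $B$ that large scale compactness leaves behind; for a truly compact space one would have $B=\emptyset$ and the classical argument would finish unchanged. The resolution, and the single point at which the hypothesis that $\mathcal{B}$ consists of open-closed subspaces is actually used, is precisely the patching above: $B$ is added to the open set surrounding the set it belongs to and subtracted from the open set surrounding the other. Everything else is routine bookkeeping of finite unions and intersections of separating neighborhoods.
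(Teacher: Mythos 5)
Your proof is correct and takes essentially the same approach as the paper's: cover $X$ by the separating neighborhoods together with the complement of the closed set, extract a finite subfamily via large scale compactness, and patch the open-closed bounded remainder $B$ into the open set around the side that contains it while deleting it from the other. The only differences are cosmetic: by keeping $X\setminus C$ (resp.\ $X\setminus A$) in the finite subfamily --- legitimate since the bornology is closed under passing to subsets, so the shrunken remainder stays in $\mathcal{B}$ --- you get $B\subseteq C$ (resp.\ $B\subseteq A$) at once, which makes the paper's preliminary case split on whether $\{x_0\}$ is open-closed unnecessary, and you write out part 2 in full where the paper merely says ``similar, or apply 1) to $X/A$.''
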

\begin{proof}
1). Suppose $A$ is a closed subset of $X$ not containing $x_0$. If $x_0$ is open-closed, then $A\subset X\setminus \{x_0\}$ is disjoint from $x_0$ and we are done. Assume $x_0$ is not open.
For each point $x\in A$ choose disjoint open sets $U_x$ containing $x$ and $V_x$ containing $x_0$.
Notice $X=(X\setminus A)\cup\bigcup\limits_{x\in A}U_x$, so there is an open-closed set $B\in \mathcal{B}$ and a finite subset $F$ of $A$ such that
$X=B\cup (X\setminus A)\cup \bigcup\limits_{x\in A}U_x$. Notice $B$ does not contain $x_0$. Therefore, $A\subset B\cup \bigcup\limits_{x\in A}U_x$ is disjoint from
$\bigcap\limits_{x\in F}V_x\setminus B$ which contains $x_0$.\\
The proof of 2) is similar or apply 1) to $X/A$.
\end{proof}

\subsection{Uniqueness of large scale compactifications}

\begin{Definition}
Given an orthogonal relation $\perp$ on a set $X$, a \textbf{large scale compactification}
$\bar X$ is a large scale compact space containing $X$ as a dense subset and satisfying the following properties:\\
1. $\perp$-bounded subsets of $X$ form the selected bornology of open-closed subsets of $\bar X$,\\
2. Two subsets $C$ and $D$ of $X$ are $\perp$-orthogonal if and only if the intersection of their closures in $\bar X$ is contained in $X$ and is $\perp$-bounded.
\end{Definition}

\begin{Proposition}
If $(X,\perp)$ has a large scale compactification $\bar X$ that is Hausdorff, then $\perp$ is a normal orthogonal relation.
\end{Proposition}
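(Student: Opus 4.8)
The plan is to verify each of the three defining properties of a normal orthogonal relation directly from the structure of the large scale compactification $\bar X$. Recall that normality (the \textbf{Tietze} condition) requires three things: (1) the relation is Fr\'echet, i.e.\ $\{x\}\perp\{y\}$ for distinct points $x,y$; (2) $C$ and $D$ $\perp$-span $X$ whenever $C\perp D$; and (3) $B\perp B$ implies $B$ is $\perp$-bounded. I would establish these using the two defining clauses of a large scale compactification together with the hypothesis that $\bar X$ is Hausdorff, drawing throughout on the characterization of $\perp$-orthogonality via disjointness-up-to-bounded-and-inside-$X$ of closures in $\bar X$.

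First I would dispatch conditions (1) and (3), which are the easy ones. For the Fr\'echet property, distinct points $x\ne y$ of $X$ have disjoint closures in the Hausdorff space $\bar X$ (indeed, points are closed and can be separated), so the intersection of their closures is empty, hence contained in $X$ and $\perp$-bounded; by clause 2 this gives $\{x\}\perp\{y\}$. For condition (3), if $B\perp B$ then by clause 2 the intersection of the closure of $B$ with itself --- that is, $cl_{\bar X}(B)$ --- lies in $X$ and is $\perp$-bounded. Since $B\subseteq cl_{\bar X}(B)$, the set $B$ is a subset of a $\perp$-bounded set and is therefore itself $\perp$-bounded (bornologies are closed under passing to subsets). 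The genuine work is the spanning condition (2).

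For condition (2), suppose $C\perp D$. By clause 2 the compact set $K:=cl_{\bar X}(C)\cap cl_{\bar X}(D)$ lies inside $X$ and is $\perp$-bounded; in particular its intersection with the corona $\bar X\setminus X$ is empty, so the closures of $C$ and $D$ in $\bar X$ are disjoint \emph{on the corona}. The heart of the argument is to separate them by a partition of $\bar X$ using large scale compactness. I would invoke Proposition \ref{NormalityOfLargeScaleCompactSpaces}: since $\bar X$ is large scale compact and Hausdorff it is regular, and being large scale compact and regular it is normal. This large scale normality should let me find a decomposition $\bar X = \bar C'\cup \bar D'$ into appropriate subsets separating the corona-disjoint closed sets $cl_{\bar X}(C)$ and $cl_{\bar X}(D)$ away from a bounded piece. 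Restricting to $X$ and setting $C' = \bar C'\cap X$, $D' = \bar D'\cap X$, I then check $X = C'\cup D'$, that $C'\perp D$ and $D'\perp C$ (again by reading off the corona-level disjointness through clause 2), thereby exhibiting the required $\perp$-span.

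The hard part will be the separation step in condition (2): converting the large scale normality of $\bar X$ into a concrete decomposition $X = C'\cup D'$ whose pieces witness $C'\perp D$ and $D'\perp C$. The subtlety is that the natural separating sets in $\bar X$ separate $cl_{\bar X}(C)$ from $cl_{\bar X}(D)$ only after excising the bounded intersection $K$, so I must be careful that the $\perp$-bounded set $K$ is absorbed correctly --- for instance by enlarging the open-closed bounded neighborhood guaranteed by large scale compactness to swallow $K$, and arranging $C'$ and $D'$ so the leftover overlap is contained in a $\perp$-bounded set and hence does not obstruct orthogonality. Managing this bounded slack, and confirming that closures in $\bar X$ of the restricted pieces $C',D'$ meet the corona only where intended, is where the routine-looking verification actually has content.
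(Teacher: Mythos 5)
Your treatment of the Fr\'echet condition and of condition (3) is fine (the paper's proof does not even spell these out), and your plan for the spanning condition --- pass to $\bar X$, invoke Proposition \ref{NormalityOfLargeScaleCompactSpaces} to conclude $\bar X$ is normal, and pull a separation back to $X$ --- is exactly the paper's route. But the proposal stops precisely where the proof has to happen: you never produce the decomposition $X=C'\cup D'$, and the device you gesture at (``enlarging the open-closed bounded neighborhood guaranteed by large scale compactness to swallow $K$'', then ``managing the bounded slack'') is not how the difficulty gets resolved; no enlargement and no slack management are needed.

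The missing observation is clause 1 of the definition of large scale compactification: the $\perp$-bounded set $K=cl_{\bar X}(C)\cap cl_{\bar X}(D)$ is itself \emph{open}-closed in $\bar X$. Hence $\bar X\setminus K$ is closed, so $cl_{\bar X}(D\setminus K)$ misses $K$, and therefore $cl_{\bar X}(C)$ and $cl_{\bar X}(D\setminus K)$ are honestly disjoint closed subsets of $\bar X$ --- nothing is left ``disjoint only on the corona''. Normality of $\bar X$ now gives open sets $U\supset cl_{\bar X}(C)$ and $V\supset cl_{\bar X}(D\setminus K)$ with disjoint closures. Put $C'=X\setminus V$ and $D'=X\setminus U$. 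Then $C'\cup D'=X$ because $U\cap V=\emptyset$; $C\perp D'$ because $cl_{\bar X}(C)\cap cl_{\bar X}(X\setminus U)\subset U\cap(\bar X\setminus U)=\emptyset$; and $D\perp C'$ because $cl_{\bar X}(D)\cap cl_{\bar X}(X\setminus V)\subset K$, which is contained in $X$ and $\perp$-bounded (subsets of bounded sets are bounded). This is the content your ``hard part'' paragraph was missing: once $K$ is excised using its open-closedness, the verification really is routine.
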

\begin{proof}
By \ref{NormalityOfLargeScaleCompactSpaces}, $\bar X$ is normal. Suppose $C\perp D$
for some subsets $C, D$ of $X$. The intersection $B$ of their closures is a $\perp$-bounded subset of $X$, hence open-closed in $\bar X$.
Therefore closures of $C$ and $D\setminus B$ are disjoint in $\bar X$ and 
there exist open neighborhoods $U$ of $cl(C)$ and $V$ of $cl(D\setminus B)$ whose closures are disjoint in $\bar X$. Notice $C\perp (X\setminus U)$
and $D\perp (X\setminus V)$.
\end{proof}

\begin{Theorem}\label{FirstTheoremOnLSCompactifications}
Suppose $\bar X$ is a large scale compactification of $(X,\perp_X)$ with the family of $\perp_X$-bounded sets $\mathcal{B}_X$ and $\bar Y$ is a large scale Hausdorff compactification of $(Y,\perp_Y)$ with the family of $\perp_Y$-bounded sets $\mathcal{B}_Y$.\\
1. Any $\perp$-continuous function $f:X\to Y$ extends uniquely to a continuous function $\bar f:\bar X\to \bar Y$.\\
2. If $f:X\to Y$ satisfies $f^{-1}(B)\in \mathcal{B}_X$
for each $B\in \mathcal{B}_Y$ and extends to a continuous function 
$\bar f:\bar X\to \bar Y$, then $f$ is $\perp$-continuous.
\end{Theorem}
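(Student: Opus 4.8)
The plan is to treat the two parts separately: in Part 1 I would build the extension by a filter-convergence argument, and in Part 2 I would derive $\perp$-continuity by a short closure computation. First I record the reduction for Part 1. The hypothesis that $f$ is $\perp$-continuous already forces $f$ to co-preserve bounded sets: if $B\subseteq Y$ is $\perp_Y$-bounded, i.e. $B\perp_Y Y$, then $f^{-1}(B)\perp_X f^{-1}(Y)=X$, so $f^{-1}(B)$ is $\perp_X$-bounded. Uniqueness of any continuous extension is immediate from density of $X$ and the Hausdorffness of $\bar Y$, so the entire content of Part 1 is existence. For each $p\in\bar X$ I would look at the trace filter $\mathcal F_p=\{\,N\cap X : N \text{ an open neighborhood of } p \text{ in } \bar X\,\}$ (nonempty traces because $X$ is dense) and its pushforward $f_\ast\mathcal F_p$ on $Y\subseteq\bar Y$, and define $\bar f(p)$ to be the point to which $f_\ast\mathcal F_p$ converges.

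The heart of the matter is that $f_\ast\mathcal F_p$ converges to a unique point, and this is the main obstacle. For \emph{uniqueness}, I would identify the cluster-point set with $\bigcap_{N\ni p}cl_{\bar Y}(f(N\cap X))$ and rule out distinct cluster points $q_1\ne q_2$: by regularity of $\bar Y$ (Proposition \ref{NormalityOfLargeScaleCompactSpaces}) pick open $V_1\ni q_1$, $V_2\ni q_2$ with disjoint $\bar Y$-closures, so $cl(V_1\cap Y)\cap cl(V_2\cap Y)=\emptyset$ and hence $(V_1\cap Y)\perp_Y(V_2\cap Y)$; then $\perp$-continuity gives $f^{-1}(V_1)\perp_X f^{-1}(V_2)$, and by clause (2) of the definition of a large scale compactification the $\bar X$-closures of these preimages meet only in a $\perp_X$-bounded subset of $X$, which a corona point $p$ avoids. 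Thus $p\notin cl_{\bar X}(f^{-1}(V_1))$, say, so $N:=\bar X\setminus cl_{\bar X}(f^{-1}(V_1))$ is a neighborhood of $p$ with $f(N\cap X)\subseteq Y\setminus V_1$, contradicting $q_1\in cl_{\bar Y}(f(N\cap X))$. For \emph{existence} of a cluster point at a corona $p$, I would extend $f_\ast\mathcal F_p$ to an ultrafilter $\mathcal G$ on $\bar Y$ and invoke large scale compactness: were $\mathcal G$ convergent to no point, covering $\bar Y$ by neighborhoods whose complements lie in $\mathcal G$ would, after a finite subcover, place a $\perp_Y$-bounded $B\subseteq Y$ into $\mathcal G$; but for a corona point $f_\ast\mathcal F_p$ contains $Y\setminus B'$ for every bounded $B'$ (since $f^{-1}(B')$ is bounded, its complement is a trace neighborhood of $p$), so $\mathcal G$ can contain no bounded set — a contradiction.

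With every trace filter convergent, the standard extension-by-continuity theorem for a regular codomain yields that $p\mapsto\lim f_\ast\mathcal F_p$ is continuous on $\bar X$, and it remains to see that it restricts to $f$ on $X$. At corona points the separation argument above is self-contained; the delicate point is a point $x\in X$, where one needs $f_\ast\mathcal F_x\to f(x)$, i.e. continuity of $f$ for the subspace topologies — a small-scale statement to which $\perp$-continuity is a priori blind on bounded sets. I would obtain it from the same separation mechanism together with the clopen-ness of $\perp_X$-bounded sets granted by clause (1): choosing (by regularity of $\bar Y$) $V_1$ with $f(x)\in V_1\subseteq cl\,V_1\subseteq V_0$, orthogonality of $V_1\cap Y$ and $Y\setminus V_0$ pulls back to $f^{-1}(V_1)\perp_X(X\setminus f^{-1}(V_0))$, and one trims away the resulting clopen bounded exceptional set to produce a neighborhood of $x$ mapping into $V_0$.

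For Part 2 I would argue purely with closures, and this direction is routine. Suppose $A\perp_Y C$, so $B_0:=cl_{\bar Y}(A)\cap cl_{\bar Y}(C)$ is $\perp_Y$-bounded and contained in $Y$. Since $\bar f$ is continuous with $\bar f|_X=f$, we have $cl_{\bar X}(f^{-1}(A))\subseteq\bar f^{-1}(cl_{\bar Y}(A))$ and likewise for $C$, whence $cl_{\bar X}(f^{-1}(A))\cap cl_{\bar X}(f^{-1}(C))\subseteq\bar f^{-1}(B_0)$. Because $B_0$ is clopen and bounded, no corona point can map into it — such a point would lie in $cl_{\bar X}(f^{-1}(B_0))=f^{-1}(B_0)\subseteq X$ — so $\bar f^{-1}(B_0)=f^{-1}(B_0)$, which is $\perp_X$-bounded by the hypothesis that $f^{-1}(\mathcal B_Y)\subseteq\mathcal B_X$. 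Thus the intersection of the two $\bar X$-closures is a $\perp_X$-bounded subset of $X$, which is precisely $f^{-1}(A)\perp_X f^{-1}(C)$.
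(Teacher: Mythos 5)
Your argument is correct, and its skeleton for Part 1 is the same as the paper's: the paper also defines $\bar f$ at a corona point via the family $\{f(D\cap X)\mid D \mbox{ a neighborhood of } x\}$, also rules out two candidate limits by separating them in $\bar Y$, pulling back through clause (2) of the definition of large scale compactification, $\perp$-continuity, and clause (2) again for $\bar X$, and also uses co-preservation of bounded sets exactly as you do. Where you genuinely diverge is in what you prove versus what the paper asserts, and in Part 2. The paper merely states that the intersection $\bigcap_D cl_{\bar Y}(f(D\cap X))$ is non-empty and that ``a similar argument shows $\bar f$ is continuous''; your ultrafilter argument (any ultrafilter refining $f_*\mathcal{F}_p$ contains $Y\setminus B'$ for every bounded $B'$, hence no bounded set, hence converges by large scale compactness) is an actual proof of the first assertion, and you correctly flag continuity of $\bar f$ at points of $X$ as a separate issue the paper never addresses; your trimming device works because subsets of $\perp_X$-bounded sets are bounded, hence clopen in $\bar X$ (in particular, if $x$ lies in any bounded set then $\{x\}$ is clopen and continuity at $x$ is automatic). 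In Part 2 you are more economical than the paper: the paper first enlarges $C$ and $D$ to $\perp_Y$-orthogonal zero-sets using normality of $\perp_Y$ (via \ref{MainPropOnNormalRelations} and \ref{RealExtensionLemma}), whereas your computation $cl_{\bar X}(f^{-1}(A))\cap cl_{\bar X}(f^{-1}(C))\subseteq \bar f^{-1}(B_0)=f^{-1}(B_0)$ needs only continuity of $\bar f$, clopenness of bounded sets, density of $X$, and the hypothesis $f^{-1}(\mathcal{B}_Y)\subseteq\mathcal{B}_X$, so normality of $\perp_Y$ is never invoked.

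One small point you should add: a unique cluster point does not by itself give convergence of $f_*\mathcal{F}_p$, and your appeal to the extension-by-continuity theorem for regular codomains requires convergence. This is repaired by the same device you already use: if no $f(N\cap X)$ is contained in an open $U\ni q$, then the sets $f(N\cap X)\setminus U$ generate a filter still refining $f_*\mathcal{F}_p$; any ultrafilter containing it contains no bounded set, hence converges by your compactness argument to a point of $cl_{\bar Y}(\bar Y\setminus U)=\bar Y\setminus U$, producing a second cluster point of $f_*\mathcal{F}_p$ and contradicting uniqueness. With that sentence inserted, the proof is complete.
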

\begin{proof}
1. Given $x\in \bar X\setminus X$ notice the family $\{f(D\cap X)\mid D \mbox{ a neighborhood of } x \mbox{ in }\bar X\}$
consists of $\perp_Y$-unbounded sets in $Y$ and the intersection of closures in $\bar Y$ of that family is non-empty. Indeed, if $f(D\cap X)$ is $\perp$-bounded for some neighborhood $D$ of $x$ in $\bar X$, then $D\cap X\subset f^{-1}(f(D\cap X))\in \mathcal{B}_X$ is contained in $X$ contradicting $x\in \bar X\setminus X$.

If the intersection contains exactly one element, it is a good candidate for
$\bar f(x)$. If there are two different points
$y_1$ and $y_2$ in the intersection, we pick closed neighborhoods $C_1\perp_Y C_2$, $C_i$ of $y_i$ for $i=1,2$, and arrive at a contradiction. Namely, both $f^{-1}(C_1)$ and $f^{-1}(C_2)$ are disjoint $\perp_X$-orthogonal sets, so their closures in $\bar X$ intersect along a set $B\in \mathcal{B}_X$. One of the closures, say $cl(f^{-1}(C_2))$, does not contain $x$, so $D:=\bar X\setminus cl(f^{-1}(C_2))$ is a neighborhood of $x$. However, $cl(f(D\cap X))$ misses $y_2$, a contradiction.
A similar argument shows $\bar f$ is continuous. \\
2. Given $C\perp_Y D$ in $Y$, we extend them to $\perp_Y$-orthogonal zero-sets $C'$ and $D'$ using \ref{MainPropOnNormalRelations} and \ref{RealExtensionLemma}. If one of them is $\perp_Y$-bounded, it is clear
$f^{-1}(C)\perp_X f^{-1}(D)$, so assume both $C'$ and $D'$ are $\perp_Y$-unbounded.
Now, $cl(C')\cap cl(D')$ is contained in $Y$ and is $\perp_Y$-bounded.
Therefore the intersection of their point-inverses is $\perp_X$-bounded
and $f^{-1}(C)\perp_X f^{-1}(D)$.
\end{proof}

\begin{Corollary}
If $\perp$ is a normal orthogonality relation on $X$ and $\bar X$ is its large scale compactification, then $\bar X\setminus \bigcup\limits_{B\in\mathcal{B}} B$
is the abstract boundary of infinity of $(X,\perp)$, where $\mathcal{B}$ is the bornology of $\perp$-bounded subsets of $X$.
\end{Corollary}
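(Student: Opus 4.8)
The plan is to identify the commutative $C^\ast$-algebra $B^{\perp}(X)/B^{\perp}_0(X)$, whose Gelfand spectrum is by definition $\partial X$, with $C(Z)$ where $Z:=\bar X\setminus\bigcup_{B\in\mathcal B}B$; Gelfand duality then returns a homeomorphism $\partial X\cong Z$. Throughout I assume, as is implicit in the notion of a compactification, that $\bar X$ is Hausdorff, so by \ref{NormalityOfLargeScaleCompactSpaces} it is normal. First I would check that $Z$ is compact (and Hausdorff, being a subspace of $\bar X$): given a cover of $Z$ by sets $U_s$ relatively open in $\bar X$, adjoin the open set $\bigcup_{B\in\mathcal B}B$ to get an open cover of $\bar X$; large scale compactness yields a finite $F$ with $\bar X\setminus(\bigcup_{s\in F}U_s\cup\bigcup_{B\in\mathcal B}B)\in\mathcal B$, and this remainder is a $\perp$-bounded set disjoint from $\bigcup_{B\in\mathcal B}B$, hence empty, so $\{U_s\}_{s\in F}$ already covers $Z$.

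Next I build a restriction homomorphism. Each $f\in B^{\perp}(X)$ has image in some closed disk $D\subset\mathbb C$, and viewing $D$ as its own large scale Hausdorff compactification for the small scale topological orthogonality (whose only bounded set is $\emptyset$), Theorem \ref{FirstTheoremOnLSCompactifications}(1) extends $f$ uniquely to a continuous $\bar f:\bar X\to D$. Define $\Phi:B^{\perp}(X)\to C(Z)$ by $\Phi(f)=\bar f|_Z$. Uniqueness of continuous extensions off the dense subset $X$ makes $\Phi$ a unital $\ast$-homomorphism, so its image is a closed $\ast$-subalgebra of $C(Z)$. It contains the constants and separates points: if $z_1\ne z_2$ in $Z$, normality of $\bar X$ supplies a continuous $h:\bar X\to[0,1]$ with $h(z_1)\ne h(z_2)$, and Theorem \ref{FirstTheoremOnLSCompactifications}(2), with target $[0,1]$ whose only bounded set is $\emptyset$, shows $h|_X\in B^{\perp}(X)$ and $\Phi(h|_X)=h|_Z$. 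By Stone--Weierstrass, $\Phi$ is surjective.

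It remains to prove $\ker\Phi=B^{\perp}_0(X)$, and this is the crux. If $f$ $\perp$-tends to $0$ at infinity, then for each $\epsilon>0$ the set $B_\epsilon=\{x:|f(x)|\ge\epsilon\}$ is $\perp$-bounded, hence open-closed in $\bar X$ and contained in $\bigcup_{B\in\mathcal B}B$; on the open set $\bar X\setminus B_\epsilon\supseteq Z$ the dense trace $X\setminus B_\epsilon$ satisfies $|f|<\epsilon$, so $|\bar f|\le\epsilon$ there, and letting $\epsilon\to 0$ gives $\bar f|_Z=0$. Conversely, suppose $\bar f|_Z=0$ but $f\notin B^{\perp}_0(X)$, so some $B_\epsilon$ is unbounded. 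Here I would cover $\bar X$ by $\{B:B\in\mathcal B\}\cup\{\bar X\setminus cl_{\bar X}(B_\epsilon)\}$ and apply large scale compactness: if $cl_{\bar X}(B_\epsilon)$ missed $Z$, i.e. lay inside $\bigcup_{B\in\mathcal B}B$, the resulting finite subfamily would force $cl_{\bar X}(B_\epsilon)$ to be a finite union of bounded sets, hence bounded, contradicting the unboundedness of $B_\epsilon$. Thus $cl_{\bar X}(B_\epsilon)$ meets $Z$ at some point $z$, where $\bar f(z)=0$ while continuity forces $|\bar f(z)|\ge\epsilon$, a contradiction. Hence $\Phi$ descends to an isomorphism $B^{\perp}(X)/B^{\perp}_0(X)\cong C(Z)$, and taking spectra yields $\partial X\cong Z=\bar X\setminus\bigcup_{B\in\mathcal B}B$.

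The hard part is precisely this last large scale compactness argument tying the unbounded superlevel sets of $f$ to the corona $Z$; everything else (the extension of functions, the $\ast$-homomorphism structure, separation of points, and the compactness of $Z$) is either routine or an easier instance of the same compactness idea. I would expect the only genuine care to be needed in verifying that $\bar X$ may be taken Hausdorff and in the bookkeeping that $cl_{\bar X}(B_\epsilon)\subset X$ once it is shown bounded, so that $B_\epsilon\subset cl_{\bar X}(B_\epsilon)$ is bounded by downward closure of the bornology.
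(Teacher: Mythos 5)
Your proposal is correct, and since the paper states this corollary with no proof at all (it is offered as an immediate consequence of Theorem \ref{FirstTheoremOnLSCompactifications}), your argument is best read as the natural completion of the derivation the paper intends; it also mirrors exactly the strategy sketched in the proof of Theorem \ref{CoronaIsBoundaryAtInfinity} (extend functions over the compactification, restrict to the corona, identify the quotient algebra with $C$ of the corona). Three comparative remarks. First, your explicit Hausdorff assumption on $\bar X$ is the right call rather than a gap: the paper's definition of large scale compactification omits Hausdorffness, but without it the statement cannot be proved (the Gelfand spectrum $\partial X$ is automatically compact Hausdorff, while $Z:=\bar X\setminus\bigcup_{B\in\mathcal B}B$ need not be), and all surrounding results in that section carry the Hausdorff hypothesis, so the corollary tacitly does too. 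Second, you can bypass Stone--Weierstrass entirely: $Z$ is closed in $\bar X$ (the union of the open-closed bounded sets is open), $\bar X$ is normal by \ref{NormalityOfLargeScaleCompactSpaces}, so any $g\in C(Z)$ extends by Tietze to a continuous map $\bar X\to D$ into some disk, and part (2) of Theorem \ref{FirstTheoremOnLSCompactifications} (with bornology $\{\emptyset\}$ on $D$) puts its restriction to $X$ in $B^{\perp}(X)$; this yields surjectivity of $\Phi$ directly, whereas your route additionally leans on the fact -- true, but nowhere verified in the paper -- that $B^{\perp}(X)$ is norm-closed in $\ell^\infty(X)$, so that the image of the $\ast$-homomorphism $\Phi$ is closed. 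Third, the kernel computation is indeed the genuine content, and yours is sound: the large scale compactness argument forcing an unbounded superlevel set $B_\epsilon$ to have closure meeting $Z$ is exactly what is needed, and the closing bookkeeping works because the remainder and the finitely many bounded sets covering $cl_{\bar X}(B_\epsilon)$ are subsets of $X$, while $\perp$-bounded sets are closed under finite unions and under passing to subsets.
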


\subsection{Existence of large scale compactifications}

\begin{Definition}
Given a set $X$ with orthogonality relation $\perp$, a subset $A$ of $X$ is called
a \textbf{$\perp$-zero-set} (\textbf{$\perp$-cozero-set}, respectively)
if there is a $\perp$-continuous function $f:X\to [0,1]$ such that
$A=f^{-1}(0)$ ($A=f^{-1}(0,1]$, respectively).
\end{Definition}

\begin{Proposition}\label{UnionOfZeroSetsProposition}
The union of two $\perp$-zero-sets ($\perp$-cozero-sets, respectively) in $(X,\perp)$ is a $\perp$-zero-set ($\perp$-cozero-set, respectively).
\end{Proposition}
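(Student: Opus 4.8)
The plan is to exhibit both unions as a single level set of a function obtained from the two defining functions by applying a fixed uniformly continuous operation on the square $[0,1]\times[0,1]$, and then to check that this operation preserves $\perp$-continuity. For the zero-set case, suppose $A_1=f_1^{-1}(0)$ and $A_2=f_2^{-1}(0)$ with $f_1,f_2:X\to[0,1]$ being $\perp$-continuous. I would set $h:=m\circ(f_1\Delta f_2)$, where $f_1\Delta f_2:X\to[0,1]\times[0,1]$ is the map $x\mapsto(f_1(x),f_2(x))$ and $m:[0,1]\times[0,1]\to[0,1]$ is multiplication, $m(s,t)=st$ (equivalently one may take $m=\min$). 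A direct check gives $h^{-1}(0)=\{x\mid f_1(x)=0\text{ or }f_2(x)=0\}=A_1\cup A_2$. For the cozero-set case, with $A_i=f_i^{-1}(0,1]$, the same construction using $m=\max$ yields $h^{-1}(0,1]=\{x\mid f_1(x)>0\text{ or }f_2(x)>0\}=A_1\cup A_2$. So in each case the desired set is realized by a suitable $h$, and everything reduces to showing $h$ is $\perp$-continuous.

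The $\perp$-continuity of $h$ I would obtain in two steps. First, by Observation \ref{ProductOfPerpFunctions} the map $f_1\Delta f_2$ is $\perp$-continuous precisely because each $f_i$ is. Second, I would isolate the auxiliary fact that post-composition with a uniformly continuous map between compact metric spaces preserves $\perp$-continuity: if $\phi:X\to Z$ is $\perp$-continuous and $\psi:Z\to W$ is uniformly continuous, where $Z$ and $W$ carry their (small scale) metric orthogonality, then $\psi\circ\phi$ is $\perp$-continuous. Indeed, if $A,C\subset W$ are metrically separated, say by $\epsilon>0$, uniform continuity supplies $\delta>0$ such that $d(z_1,z_2)\ge\delta$ whenever $d(\psi(z_1),\psi(z_2))\ge\epsilon$; hence $\psi^{-1}(A)$ and $\psi^{-1}(C)$ are metrically separated in $Z$, and $\perp$-continuity of $\phi$ gives $\phi^{-1}(\psi^{-1}(A))\perp_X\phi^{-1}(\psi^{-1}(C))$, which is exactly $(\psi\circ\phi)^{-1}(A)\perp_X(\psi\circ\phi)^{-1}(C)$. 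Applying this with $\psi\in\{m,\min,\max\}$, each uniformly continuous on the compact square, shows $h$ is $\perp$-continuous and completes both cases.

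The main obstacle is precisely this auxiliary step. Since $\perp$-continuity is phrased in terms of the orthogonality relations rather than in terms of points, one cannot simply invoke the ordinary continuity of $m$; one must translate the topological statement into the metric-separation statement, and this is exactly where compactness of $[0,1]\times[0,1]$ is used, via uniform continuity. The level-set identities and the reduction to the diagonal map are routine, and the covering-by-small-boxes technique already employed in the proof of \ref{ComplexExtensionLemma} is the same mechanism at work here; I would therefore either record the auxiliary fact as a short standalone lemma or fold it inline, since it carries the only nontrivial content of the argument.
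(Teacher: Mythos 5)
Your proposal is correct and follows essentially the same route as the paper: the paper also forms the union of zero-sets (cozero-sets) as a level set of the product (respectively sum, where you use $\max$) of the two defining functions, invoking Observation \ref{ProductOfPerpFunctions} for $\perp$-continuity. The only difference is that you spell out the step the paper leaves implicit, namely that post-composing with a uniformly continuous operation on the compact square preserves $\perp$-continuity, which is a worthwhile detail to record.
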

\begin{proof}
Given two $\perp$-continuous functions $f,g:X\to [0,1]$ their product and their sum is a $\perp$-continuous function (apply \label{ProductOfPerpFunctions}). Look at zero-sets (cozero-sets, respectively) generated by those functions.
\end{proof}

\begin{Definition}
Given a set $X$ with orthogonality relation $\perp$ consider the family $\partial_0 X$ of all ultrafilters in $X$ consisting of $\perp$-unbounded subsets of $X$ that are $\perp$-zero-sets.

By $\bar X_0$ we mean $X\cup \partial_0 X$ with the understanding that principal ultrafilters, i.e. those containing all $\perp$-zero-supsets of $\{x\}$ for some $x\in X$, are identified with that particular point of $X$.

Given $C\subset X$, by $\bar C$ we mean the union of $C$ and of all ultrafilters in $ \partial_0 X$ containing a subset of $C$. In particular, if $C$ is $\perp$-bounded, then $\bar C=C$.
\end{Definition}

\begin{Lemma}\label{BarLemma} 
If $E=C\cap D$ and $F=C\cup D$, then $\bar E=\bar C\cap \bar D$ and
$\bar F=\bar C\cup \bar D$ if both $C$ and $D$ are $\perp$-cozero-sets in $X$.
\end{Lemma}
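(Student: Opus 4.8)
The plan is to prove the two set-equalities $\bar E=\bar C\cap\bar D$ and $\bar F=\bar C\cup\bar D$ by unwinding the definition of the bar operation on ultrafilters, using throughout that unions of $\perp$-cozero-sets are again $\perp$-cozero-sets (Proposition \ref{UnionOfZeroSetsProposition} applied to the complementary zero-sets, or its cozero analogue) and the standard ultrafilter facts: a filter contains a set iff it contains every superset, and if it contains a union $A\cup B$ then it contains one of $A$, $B$. First I would handle the union formula, which is the easier of the two. For $\bar C\cup\bar D\subset\bar F$: any ultrafilter $p\in\partial_0 X$ lying in $\bar C$ contains some $\perp$-zero-subset of $C$, hence of $F=C\cup D$, so $p\in\bar F$; the same for $\bar D$, and the point-parts match trivially. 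For the reverse inclusion $\bar F\subset\bar C\cup\bar D$, take $p\in\bar F$ containing a $\perp$-zero-subset $Z\subset C\cup D$. Here I would intersect $Z$ with $C$ and with $D$; each intersection is a $\perp$-zero-set (finite intersections of zero-sets are zero-sets, being the zero-set of the sum of squares of the defining functions), their union is $Z\in p$, so by the ultrafilter property $p$ contains $Z\cap C$ or $Z\cap D$, placing $p$ in $\bar C$ or $\bar D$.

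Next I would treat the intersection formula. The inclusion $\bar E\subset\bar C\cap\bar D$ is immediate: a $\perp$-zero-subset of $E=C\cap D$ is a $\perp$-zero-subset of both $C$ and $D$, so any ultrafilter containing it lies in both $\bar C$ and $\bar D$. The real content is the reverse inclusion $\bar C\cap\bar D\subset\bar E$. Here I take an ultrafilter $p$ that contains a $\perp$-zero-subset $Z_C\subset C$ and a $\perp$-zero-subset $Z_D\subset D$. Since $p$ is a filter it contains $Z_C\cap Z_D$, which is again a $\perp$-zero-set and is a subset of $C\cap D=E$; hence $p\in\bar E$.

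The main obstacle — and the reason the hypothesis that $C$ and $D$ are \emph{cozero}-sets, not merely zero-sets, is needed — lies in checking that the members of $p$ witnessing membership in $\bar E$ and $\bar F$ can be taken to be genuine $\perp$-zero-sets inside the prescribed set, while $p$ itself is by definition an ultrafilter of $\perp$-\emph{unbounded} $\perp$-zero-sets. The delicate point in the union direction is that $Z\cap C$ and $Z\cap D$ need not individually be unbounded, so one of them may fail to lie in $p$ for the trivial reason of boundedness; I must argue that $p$, being an ultrafilter on unbounded zero-sets, still selects one of the two as a member whenever their union is a member, and that the selected one is unbounded (if $Z\cap C$ is bounded then $Z\cap D$ contains all of $Z$ up to a bounded set and carries the unboundedness). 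It is precisely to guarantee that intersecting with $C$ or $D$ keeps one inside the family of $\perp$-zero-sets that cozero-ness of $C,D$ is invoked: the defining functions of $C$ and $D$ can be combined (via products, as in Observation \ref{ProductOfPerpFunctions}) to realize $Z\cap C$, $Z\cap D$, $Z_C\cap Z_D$ as zero-sets of explicit $\perp$-continuous functions. Once these closure-properties of the $\perp$-zero-set family are in place, the four inclusions follow mechanically from the ultrafilter axioms.
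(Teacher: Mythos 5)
Your handling of the intersection formula ($\bar E=\bar C\cap\bar D$) and of the easy inclusion $\bar C\cup\bar D\subset\bar F$ is correct and coincides with the paper's argument. The genuine gap is in the hard inclusion $\bar F\subset\bar C\cup\bar D$: you intersect the witnessing zero-set $Z\subset C\cup D$ with $C$ and with $D$ and claim that $Z\cap C$ and $Z\cap D$ are $\perp$-zero-sets. They are not: $C$ and $D$ are \emph{cozero}-sets, and the intersection of a zero-set with a cozero-set is in general not a zero-set of any $\perp$-continuous function (already in $\mathbb{R}$ with the usual structure, take $Z=\mathbb{R}$ and $C=(0,\infty)$; then $Z\cap C=(0,\infty)$ is not even topologically closed, so it cannot be a zero-set). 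Your closing paragraph asserts that cozero-ness of $C,D$ is precisely what lets one realize $Z\cap C$ and $Z\cap D$ as zero-sets by combining defining functions; this is backwards --- cozero-ness is the obstruction, and no algebraic combination of the defining functions produces these sets. Since members of an ultrafilter in $\partial_0 X$ must be $\perp$-unbounded $\perp$-zero-sets, neither $Z\cap C$ nor $Z\cap D$ is even eligible to witness $p\in\bar C$ or $p\in\bar D$, so the primality step has nothing to apply to and the argument collapses.

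The paper repairs exactly this point with a different decomposition of $Z$ (called $H$ there). Writing $C=\alpha_C^{-1}(0,1]$ and $D=\alpha_D^{-1}(0,1]$ with $\alpha_C,\alpha_D$ $\perp$-continuous, it splits $H$ into $H_C=\{x\in H\mid \alpha_C(x)\ge\alpha_D(x)\}$ and $H_D=\{x\in H\mid \alpha_D(x)\ge\alpha_C(x)\}$. These \emph{are} $\perp$-zero-sets: $H_C$ is the intersection of $H$ with the zero-set of $\max(\alpha_D-\alpha_C,0)$, and intersections of zero-sets are zero-sets (by the same sum/product argument as in Proposition \ref{UnionOfZeroSetsProposition} and Observation \ref{ProductOfPerpFunctions}). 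Moreover --- and this is where cozero-ness is genuinely used --- $H_C\subset C$ and $H_D\subset D$, because every $x\in H\subset C\cup D$ has $\max(\alpha_C(x),\alpha_D(x))>0$, so the larger value is strictly positive. Now $H=H_C\cup H_D$ is a union of two zero-sets sitting inside $C$ and $D$, and the primality argument you intended goes through: if neither $H_C$ nor $H_D$ lies in $\mathcal{F}$, maximality yields $G_1,G_2\in\mathcal{F}$ missing them, and $G_1\cap G_2$ is then a member of $\mathcal{F}$ disjoint from $H$, a contradiction; this argument also absorbs the unboundedness worry you raised, since it never requires deciding which of the two pieces is unbounded. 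If you replace $Z\cap C$, $Z\cap D$ by $H_C$, $H_D$, the rest of your proposal is sound.
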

\begin{proof}
Any ultrafilter containing subsets of both $C$ and $D$ also contains a subset of $C\cap D$.
$\bar C\cup \bar D\subset \bar F$ is obvious. Suppose $\mathcal{F}$ is 
an ultrafilter containing a subset $H$ of $C\cup D$ but no subsets of $C$ or $D$.
Pick continuous functions $\alpha_C,\alpha_D:X\to [0,1]$ such that
$C=\alpha_C^{-1}(0,1]$ and $D=\alpha_D^{-1}(0,1]$.
Let $H_C:=\{x\in \alpha_C(x)\ge \alpha_D(x)\}$ and $H_D:=\{x\in \alpha_D(x)\ge \alpha_C(x)\}$. Both are $\perp$-zero-sets if $H$ is (the proof is similar to that of \ref{UnionOfZeroSetsProposition}).
Since $H_C\notin{F}$, there is $G_1\in\mathcal{F}$ disjoint from $H_C$.
Similarly, there is $G_2\in\mathcal{F}$ disjoint from $H_D$.
That means $G_1\cap G_2$ is disjoint from $H$, a contradiction.
\end{proof}

\begin{Theorem}
$X\cup\partial_0 X$ is large scale compact if the topology has 
$$\{\bar U\mid U \mbox{ is a $\perp$-cozero-set in } X\}$$ as its basis, and $\mathcal{B}$ is the bornology of bounded subsets of $X$.
Moreover, if $\perp$ is a normal orthogonality relation, then $X\cup\partial_0 X$
is normal.
\end{Theorem}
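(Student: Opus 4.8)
The plan is to verify the statement in three stages: first that the proposed family of sets $\{\bar U \mid U \text{ a } \perp\text{-cozero-set}\}$ genuinely forms a basis for a topology, second that $X \cup \partial_0 X$ is large scale compact with respect to that topology and the bornology $\mathcal{B}$, and third that normality of $\perp$ forces the space to be normal. For the basis claim I would rely on \ref{BarLemma}: since cozero-sets are closed under finite intersection (the product of two $\perp$-continuous $[0,1]$-valued functions is $\perp$-continuous by \ref{UnionOfZeroSetsProposition} and \ref{ProductOfPerpFunctions}), and $\overline{C \cap D} = \bar C \cap \bar D$, the family is closed under finite intersections, which is exactly the basis condition. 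I would also check that the whole space is covered, namely that $X \cup \partial_0 X = \bar X$ where $X$ itself is a cozero-set.

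Next I would attack large scale compactness directly from the definition. Given an open cover $\{U_s\}$ of $X \cup \partial_0 X$, I would want to extract a finite subfamily whose complement is $\perp$-bounded. The natural strategy is to pass to basic open sets $\bar W$ (with $W$ a cozero-set) refining the cover, and argue by contradiction: if no finite union leaves a $\perp$-bounded complement, then the complements of finite unions form a family of $\perp$-unbounded cozero-sets with the finite intersection property among their closures, which should produce an ultrafilter of $\perp$-unbounded zero-sets, i.e. a point of $\partial_0 X$ not covered by any $\bar U_s$. Here the duality between the $\perp$-zero-sets defining the points of $\partial_0 X$ and the $\perp$-cozero-sets defining the basic opens is the crux: a point $\mathcal{F} \in \partial_0 X$ lies in $\bar U$ precisely when $\mathcal{F}$ contains a subset of $U$, so missing the cover translates into an ultrafilter consisting of complements, and one extends this to a maximal filter of $\perp$-unbounded zero-sets. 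I expect the main obstacle to lie exactly in this step, because one must be careful that the ultrafilter so produced really does consist of $\perp$-zero-sets that are $\perp$-unbounded (so it is a genuine point of $\partial_0 X$) rather than degenerating into the principal situation, and that its existence contradicts the covering hypothesis.

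For normality under the hypothesis that $\perp$ is normal, I would use \ref{MainPropOnNormalRelations} together with the extension \ref{RealExtensionLemma}. Given two disjoint closed sets in $\bar X$, I would trace them back to $\perp$-orthogonal subsets $C, D$ of $X$ (after absorbing the $\perp$-bounded intersection $B = C \cap D$, which is open-closed), apply \ref{MainPropOnNormalRelations} to obtain $E, F$ with $C \subset E^\perp$, $D \subset F^\perp$ and $E^\perp \perp F^\perp$, and then realize the separating sets as $\bar{\,\cdot\,}$-closures of appropriate cozero-sets. A $\perp$-continuous Urysohn-type function $f : X \to [0,1]$ separating $C$ and $D$, whose existence follows from normality via \ref{RealExtensionLemma}, extends to the points of $\partial_0 X$ by the very construction of $\bar X_0$ (ultrafilters of zero-sets evaluate $f$), giving a continuous separating function on all of $\bar X$. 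Combined with \ref{NormalityOfLargeScaleCompactSpaces}, which already yields normality from large scale compactness plus regularity, I would in fact prefer to establish regularity of $\bar X$ and then invoke that proposition, reducing the normality claim to checking the Hausdorff and regularity separation axioms at points of the corona, both of which follow from \ref{BarLemma} and the ability to sandwich a cozero-set strictly inside another using the normal extension lemma.
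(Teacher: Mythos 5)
Your proposal takes essentially the same route as the paper: large scale compactness is proved by the identical contradiction (the complements $B_F=X\setminus\bigcup_{s\in F}U_s$ are $\perp$-unbounded zero-sets, by \ref{UnionOfZeroSetsProposition}, forming a filter that extends to an ultrafilter in $\partial_0 X$, which must lie in some $\bar U_t$ while also containing $X\setminus U_t$), and normality is obtained by the same reduction through \ref{NormalityOfLargeScaleCompactSpaces}, separating corona points by basic open sets $\bar U$ coming from Urysohn-type functions attached to disjoint zero-set members of distinct ultrafilters. The only deviations are harmless: your extra regularity check is redundant (Hausdorff plus large scale compactness already gives it via \ref{NormalityOfLargeScaleCompactSpaces}), and your first, abandoned idea of pulling disjoint closed subsets of $X\cup\partial_0 X$ back to $\perp$-orthogonal subsets of $X$ would in fact fail (that correspondence only holds after passing to the quotient $X\cup\partial X$), but your final plan does not rely on it.
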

\begin{proof}
Suppose $\{U_s\}_{s\in S}$ is a family of $\perp$-cozero-sets of $X$ such that
$X\cup\partial_0 X=\bigcup\limits_{s\in S}\bar U_{s}$. Our goal is to show existence of a finite subset $F$ of $S$ such that $B_F:=X\setminus \bigcup\limits_{s\in F} U_s$ is $\perp$-bounded. In that case, $(X\cup\partial_0 X)-\bigcup\limits_{s\in F}\bar U_{s}$ is $\perp$-bounded and we are done.
Indeed, given $\mathcal{F}\in \partial_0 X$, there is $D\in \mathcal{F}$ contained
in $\bigcup\limits_{s\in F}U_{s}$. $D$ can be expressed as $D=\bigcup\limits_{s\in F} D_{s}$, where $D_s\subset U_s$ is a zero-set in $X$. Now, there is $t\in F$ such that $D_t$ is $\perp$-unbounded resulting in $\mathcal{F}\in \bar U_t$. 

Suppose $B_F:=X\setminus \bigcup\limits_{s\in F} U_s$ is $\perp$-unbounded
for all finite subsets $F$ of $S$. There is an ultrafilter $\mathcal{F}$ containing all those sets as they are $\perp$-zero-sets (see \ref{UnionOfZeroSetsProposition}). Also, $\mathcal{F}\in \bar U_{t}$ for some $t\in S$ which means $D\in\mathcal{F}$
for some $D\subset U_t$, contradicting
$X\setminus U_t\in \mathcal{F}$.

Suppose $\perp$ is a normal orthogonality relation. To show $X\cup\partial_0 X$
is normal, it suffices to prove it is Hausdorff in view of \ref{NormalityOfLargeScaleCompactSpaces}. Clearly it is so on the union of $\perp$-bounded subsets of $X$, so assume $\mathcal{F}_1\ne\mathcal{F}_2$.
In that case there are disjoint $\perp$-zero-sets $D_i\in\mathcal{F}_i$, $i=1,2$.
That leads to a continuous function $\alpha:X\to [1,2]$ satisfying $\alpha^{-1}(i)=D_i$ for $i=1,2$. Notice that $\bar U_1\cap \bar U_2=\emptyset$, where $U_1=\alpha^{-1}[1,1.2)$ and $U_2=\alpha^{-1}(1.7,2]$.
\end{proof}

\begin{Definition}
Given a set $X$ with orthogonality relation $\perp$ we introduce a relation on elements of $\partial_0 X$ as follows: $\mathcal{F}_1\sim \mathcal{F}_2$ if $C$ is not
$\perp$-orthogonal to $D$ whenever $C\in \mathcal{F}_1$ and $D\in \mathcal{F}_2$.
\end{Definition}

\begin{Proposition}
If $\perp$ is a normal orthogonality relation on $X$, then $\sim$ is an equivalence relation
and the quotient space under this relation, denoted by $\bar X:=X\cup\partial X$
is large scale compact and Hausdorff.
Moreover, two subsets $C$ and $D$ of $X$ are $\perp$-orthogonal if and only if
the intersection of their closures in $\bar X$ is contained in $X$ and is $\perp$-bounded.
\end{Proposition}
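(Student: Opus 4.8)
The plan is to verify the three assertions in order: that $\sim$ is an equivalence relation, that the quotient $\bar X = X \cup \partial X$ is large scale compact and Hausdorff, and finally the orthogonality characterization. For the equivalence relation, reflexivity and symmetry are immediate from the definition (symmetry of $\perp$ and the fact that no ultrafilter contains two disjoint sets). The only real work is transitivity: given $\mathcal{F}_1 \sim \mathcal{F}_2$ and $\mathcal{F}_2 \sim \mathcal{F}_3$, I must show $\mathcal{F}_1 \sim \mathcal{F}_3$. Here I expect to use normality crucially. Suppose for contradiction that some $C \in \mathcal{F}_1$ and $E \in \mathcal{F}_3$ satisfy $C \perp E$. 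By \ref{MainPropOnNormalRelations} I can fatten $C$ and $E$ to $\perp$-zero-sets $C'$ and $E'$ (using \ref{RealExtensionLemma}) with $C'^\perp \perp E'^\perp$, so that $X$ splits as a union of a set parallel to $C'$ and a set parallel to $E'$. Whichever half any fixed $D \in \mathcal{F}_2$ meets in a $\perp$-unbounded zero-subset will then be orthogonal to one of $C$ or $E$, contradicting one of the two resemblances.

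For large scale compactness and the Hausdorff property, the preceding theorem already established that $X \cup \partial_0 X$ is large scale compact, and normal (hence Hausdorff) when $\perp$ is normal. Since $\bar X$ is a quotient of $X \cup \partial_0 X$ collapsing each $\sim$-class to a point, large scale compactness passes to the quotient provided the quotient topology still has a basis of sets of the form $\bar U$ for $\perp$-cozero-sets $U$ and the bornology is unchanged; I would check that the images of the basic open sets remain open and that a $\perp$-bounded complement upstairs maps to a $\perp$-bounded complement downstairs. For Hausdorffness of the quotient, by \ref{NormalityOfLargeScaleCompactSpaces} it again suffices to separate two distinct classes $[\mathcal{F}_1] \ne [\mathcal{F}_2]$; by definition of $\sim$ there are $C \in \mathcal{F}_1$ and $D \in \mathcal{F}_2$ with $C \perp D$, and \ref{MainPropOnNormalRelations} produces $\perp$-cozero separators $E, F$ with disjoint closures, yielding disjoint basic neighborhoods $\bar E, \bar F$ in $\bar X$.

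The orthogonality characterization is the payoff and where I would concentrate the care. The forward direction is essentially the definition of $\sim$: if $C \perp D$, then no $\mathcal{F}_1 \ni (\text{zero-subset of } C)$ can be $\sim$-related to any $\mathcal{F}_2 \ni (\text{zero-subset of } D)$, so $cl(C)$ and $cl(D)$ meet only inside $X$, and the intersection there must be $\perp$-bounded since any $\perp$-unbounded common zero-subset would generate an ultrafilter forbidden by orthogonality. The reverse direction is the harder one: assuming $cl(C) \cap cl(D) \subset X$ is $\perp$-bounded, I must deduce $C \perp D$. I would argue contrapositively: if $C \not\perp D$, I want to produce a point of $\partial X$ in both closures. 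After removing the $\perp$-bounded part, the non-orthogonality of $C$ and $D$ should let me build an ultrafilter $\mathcal{F}$ of $\perp$-unbounded zero-sets whose class lies in both $\bar C$ and $\bar D$ after passing to the quotient.

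The main obstacle I anticipate is transitivity of $\sim$ together with the reverse direction of the characterization, both of which hinge on turning a non-orthogonality hypothesis into an actual ultrafilter witnessing it; the delicate step is ensuring that the zero-set structure is rich enough, via \ref{MainPropOnNormalRelations} and \ref{UnionOfZeroSetsProposition}, that the relevant families generate a genuine ultrafilter of $\perp$-unbounded $\perp$-zero-sets rather than collapsing to a $\perp$-bounded set. I expect normality of $\perp$ to be exactly what guarantees this, just as it was in the preceding theorem.
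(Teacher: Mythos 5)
Your treatment of transitivity is essentially the paper's argument (split $X$ by normality into two halves, each orthogonal to one of the two witnesses, and force the middle ultrafilter into one half), though note that the step you flag as delicate is resolved in the paper not by \ref{UnionOfZeroSetsProposition} but by the maximality argument inside Lemma \ref{BarLemma}: from $D\subset H_C\cup H_D$ you must conclude that $D\cap H_C$ or $D\cap H_D$ actually \emph{belongs} to $\mathcal{F}_2$ (not merely that it is an unbounded zero-set), and that primality of zero-set ultrafilters is exactly what \ref{BarLemma} proves. The genuine gap is in your Hausdorffness argument for the quotient. Producing disjoint (even $\perp$-orthogonal) basic neighborhoods $\bar E,\bar F$ of two classes \emph{upstairs} in $X\cup\partial_0 X$ does not separate the classes in $\bar X$: an open set of the quotient must have open \emph{saturated} preimage, and $\bar E$ is not saturated --- its saturation (the union of all classes meeting it) need not be open, and likewise "the images of the basic open sets remain open" is not a routine check but is in general false for such decompositions (the quotient map is closed, not open). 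The paper closes this hole with an upper-semicontinuity argument that your proposal omits entirely: first each equivalence class is shown to be closed, and then, for every neighborhood $U$ of a class $[\mathcal{F}]$, large scale compactness of $X\cup\partial_0 X$ is used to manufacture a smaller neighborhood $V=\bigcap_{i=1}^k \bar U_{G(i)}\setminus B$ such that \emph{every class meeting $V$ is contained in $U$}. That statement is what makes the quotient topology rich enough to be Hausdorff (and large scale compact with the same bornology); nothing in your sketch substitutes for it.

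There is a second gap in the reverse direction of the orthogonality characterization. Your plan is contrapositive: from $C\not\perp D$ build an ultrafilter of $\perp$-unbounded $\perp$-zero-sets whose class lies in both closures. In the abstract setting non-orthogonality hands you no single unbounded set "close to both $C$ and $D$" with which to seed such an ultrafilter, and you give no construction --- this is precisely the step that does not obviously exist. The paper goes the direct way instead, and only after the Hausdorff/compactness part is complete: since $\bar X$ is large scale compact and Hausdorff, Proposition \ref{NormalityOfLargeScaleCompactSpaces} makes it normal; deleting the $\perp$-bounded intersection, the closures of $C$ and $D$ are disjoint, and the separation argument of that proposition can be run so as to produce neighborhoods that are not merely disjoint but $\perp$-orthogonal, which yields $C\perp D$ at once. (Your forward direction also needs care: $cl(C)$ in $\bar X$ can a priori contain classes no member of which contains a subset of $C$; the paper handles this by extending the characteristic function of $C$ on $C\cup D$ via \ref{RealExtensionLemma} and \ref{ContinuityForLsTopologicalRelation}-style reasoning to separate the closures, rather than by identifying $cl(C)$ with the set of classes through $\bar C$.)
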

\begin{proof}

\textbf{Claim:} If $C$ and $D$ are disjoint, $\perp$-orthogonal $\perp$-zero-sets in $X$, then there are $\perp$-cozero-sets $U_C$, $U_D$ such that $C\subset U_C$, $D\subset U_D$, $U_C\perp D$, $U_D\perp C$, and $U_C\cup U_D=X$.\\
\textbf{Proof of Claim:} Pick two $\perp$-continuous functions $f,g:X\to [0,1]$
with $C$ being the zero-set of $f$ and $D$ being the zero set of $g$.
Define $h$ as $\frac{f}{f+g}$ and notice $h$ is $\perp$-continuous using \ref{ProductOfPerpFunctions}. Define $U_C$ as $h^{-1}[0,0.6)$ and $U_D$
as $h^{-1}(0.4,1]$. $\blacksquare$\\
Notice that we can accomplish $U_C$ and $U_D$ to be both disjoint and $\perp$-orthogonal if $U_C\cup U_D=X$ is not required.

Given two ultrafilters $\mathcal{F_i}$, $i=1,2$, such that $\mathcal{F_1}\sim \mathcal{F_2}$ is false, we can choose $C\in \mathcal{F_1}$ and $D\in \mathcal{F_2}$
that are $\perp$-orthogonal. Moreover, we may assume $C\cap D=\emptyset$ by removing their intersection which is $\perp$-bounded.
Using the Claim choose $U_C$, $U_D$ such that $C\subset U_C$, $D\subset U_D$, $U_C\perp D$, $U_D\perp C$, and $U_C\cup U_D=X$.
In that case any other ultrafilter must belong to $\bar U_C$ or $\bar U_D$
thus ensuring $\sim$ is an equivalence relation.

Now we need to show that the equivalence class $[\mathcal{F_1}]$ of each ultrafilter $\mathcal{F_1}$ is a closed subset of $X\cup\partial_0 X$. That follows from the observation right after the proof of Claim.

Next, let's show that if $U$ is a neighborhood of $[\mathcal{F}]$ in $X\cup\partial_0 X$,
then there is a neighborhood $V$ of $[\mathcal{F}]$ in $U$ such that
any equivalence class intersecting $V$ is contained in $U$.

Given any ultrafilter $\mathcal{G}$ not belonging to $U$ we can find
disjoint $\perp$-cozero sets $U_G$ and $V_G$ such that $[\mathcal{F}]\subset \bar U_G\subset U$
and $\mathcal{G}\in \bar V_G$. Now, there is a $\perp$-bounded subset $B$ of $X$
and finitely many ultrafilters $\mathcal{G}(i)$, $1\leq i\leq k$,
such that $ X\cup\partial_0 X=B\cup U\cup \bigcup\limits_{i=1}^k \bar V_{G(i)}$.
Notice $(X\cup\partial_0 X) \setminus U\subset B\cup \bigcup\limits_{i=1}^k \bar V_{G(i)}$
and $[\mathcal{F}]\subset V:=\bigcap\limits_{i=1}^k \bar U_{G(i)}\setminus B$.
Observe that any equivalence class intersecting $V$ must be contained in $U$.
That quarantees $\bar X$ is large scale compact and Hausdorff.

Suppose two subsets $C$ and $D$ of $X$ are $\perp$-orthogonal, yet
the intersection of their closures in $\bar X$ contains an equivalence class
$[\mathcal{F}]$. We may assume $C\cap D=\emptyset$ and, by extending
the characteristic function of $C$ on $C\cup D$ first over $X$, then over $X\cup\partial_0 X$, we may  
find disjoint neighborhoods
of $C$ and $D$, a contradiction.

Suppose two subsets $C$ and $D$ have
the intersection of their closures in $\bar X$ contained in $X$ and being $\perp$-bounded. By removing that intersection, we may assume closures of $C$ and $D$ are disjoint. As in the proof of \ref{NormalityOfLargeScaleCompactSpaces}, we can find neighborhoods of $C$ and $D$ that are not only disjoint but $\perp$-orthogonal. Therefore $C\perp D$.
\end{proof}

\begin{Corollary}
If $\perp$ is a normal orthogonality relation on $X$, then $(X,\perp)$ has a large scale compactification and it is unique up to a homeomorphism fixing $X$.
\end{Corollary}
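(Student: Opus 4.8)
The plan is to separate the statement into existence and uniqueness. Existence comes essentially for free from the construction in the immediately preceding Proposition, while uniqueness will be extracted from the extension property recorded in Theorem \ref{FirstTheoremOnLSCompactifications}.

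For existence I would simply take $\bar X := X \cup \partial X$ to be the quotient space produced in the preceding Proposition. That result already shows $\bar X$ is large scale compact and Hausdorff and verifies condition~2 of the definition of a large scale compactification, namely that $C \perp D$ holds exactly when the closures of $C$ and $D$ in $\bar X$ meet inside $X$ in a $\perp$-bounded set. It remains only to record two bookkeeping facts: that $X$ is dense in $\bar X$, which is immediate because every nonempty basic open set $\bar U$ already contains the nonempty $\perp$-cozero-set $U \subseteq X$, and that the selected bornology of open-closed subsets of $\bar X$ is precisely the family of $\perp$-bounded subsets of $X$, which is exactly how the bornology $\mathcal{B}$ was chosen in the Theorem defining $X \cup \partial_0 X$. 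Together these identify $\bar X$ as a large scale compactification of $(X,\perp)$.

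For uniqueness, suppose $\bar X_1$ and $\bar X_2$ are large scale compactifications of $(X,\perp)$, both Hausdorff (the constructed one is; since the extension machinery of Theorem \ref{FirstTheoremOnLSCompactifications} requires a Hausdorff target, I compare Hausdorff compactifications). The identity $\mathrm{id}\colon X \to X$ is trivially $\perp$-continuous and satisfies $\mathrm{id}^{-1}(B) = B \in \mathcal{B}$ for every $\perp$-bounded $B$. Applying Theorem \ref{FirstTheoremOnLSCompactifications}(1) with source $\bar X_1$ and the Hausdorff target $\bar X_2$ yields a continuous extension $\phi\colon \bar X_1 \to \bar X_2$ fixing $X$ pointwise; reversing the roles gives a continuous $\psi\colon \bar X_2 \to \bar X_1$ fixing $X$.

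Finally, the compositions $\psi\circ\phi$ and $\phi\circ\psi$ are continuous self-maps of $\bar X_1$ and $\bar X_2$ that restrict to the identity on the dense subset $X$; since both spaces are Hausdorff, two continuous maps agreeing on a dense set must coincide, so these compositions are the respective identities. Hence $\phi$ and $\psi$ are mutually inverse homeomorphisms fixing $X$, which is the asserted uniqueness. The main obstacle here is the Hausdorff requirement: Theorem \ref{FirstTheoremOnLSCompactifications}(1) only extends maps into a Hausdorff large scale compactification, so invoking it in \emph{both} directions forces one to know that every compactification being compared is Hausdorff. Once that is secured, the argument reduces to the standard mutual-extension (ping-pong) proof that compactifications sharing the same defining data are canonically homeomorphic.
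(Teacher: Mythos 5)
Your proof is correct and is essentially the argument the paper intends (the Corollary is stated there without proof): existence is exactly the construction of the preceding Proposition together with the two bookkeeping checks you record, and uniqueness is the standard ping-pong application of Theorem \ref{FirstTheoremOnLSCompactifications}(1) to the identity map, with density plus Hausdorffness forcing the composites to be identities. Your restriction of the uniqueness claim to Hausdorff large scale compactifications is also the right reading, since Theorem \ref{FirstTheoremOnLSCompactifications} only provides extensions into Hausdorff targets (and the constructed compactification is Hausdorff), so this is a faithful---indeed slightly more careful---rendering of the paper's implicit proof.
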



\begin{thebibliography}{99}

\bibitem{Beer}
Gerald Beer,
\emph{  On metric boundedness structures},  Set-valued Analysis 7 (1999), pp.195--208

\bibitem{JDEnds} J. Dydak, \emph{Ends and simple coarse structures}, January 2018.

\bibitem{DH}   J.Dydak and C.Hoffland, \emph{   An alternative definition of coarse structures},
Topology and its Applications 155 (2008) 1013--1021

\bibitem{DW}   J.Dydak and T.Weighill, \emph{  Extension theorems for large scale spaces via coarse neighbourhoods},
T. Mediterr. J. Math. (2018) 15: 59. \url{https://doi.org/10.1007/s00009-018-1106-z} 

\bibitem{En}
R.Engelking,
 \emph{General Topology},
Heldermann Verlag Berlin (1989).

\bibitem{Grom}
M. Gromov, \emph{Asymptotic invariants for infinite groups}, in
Geometric Group Theory, vol. 2, 1--295, G. Niblo and M. Roller,
eds., Cambridge University Press, 1993.


 \bibitem{Hu}
S.-T.Hu,
\emph{  Boundedness in a topological space},  J.Math Pures Appl. (1949), pp.287--320



 \bibitem{KalHon} S. Kalantari and B. Honari, \emph{Asymptotic resemblance}, Rocky Mountain J.
Math., 46(4),1231--1262, 2016.

 \bibitem{KapBen} Kapovich, Ilya, and Nadia Benakli, \emph{Boundaries of hyperbolic groups}, Combinatorial and geometric group theory (New York, 2000/Hoboken, NJ, 2001). Contemporary Mathematics, vol. 296 Providence, RI American Mathematical Society 2002, p. 39--93.

\bibitem{NaWa} S. A. Naimpally and B. D. Warrack, \emph{Proximity Spaces}, Cambridge Tract in Mathematics, No. 59, Cambridge University Press, Cambridge, London-New York, 1970.

\bibitem{YuNo}
P.Nowak and G. Yu, \emph{Large Scale Geometry}, EMS Textbooks in Mathematics 2012, ISBN-10: 3037191120.

 \bibitem{Peschke} Georg Peschke, \emph{The Theory of Ends},
Nieuw Archief voor Wiskunde, 8 (1990), 1--12

\bibitem{Roe lectures}
J. Roe, \emph{Lectures on coarse geometry}, University Lecture
Series, 31. American Mathematical Society, Providence, RI, 2003.
\end{thebibliography}
\end{document}